\documentclass[11pt,oneside]{amsart}

\usepackage[margin=1in]{geometry}
\usepackage{amsmath,amsthm,amssymb}
\usepackage{microtype}
\usepackage{tikz}
\usetikzlibrary{arrows.meta}
\usepackage[colorlinks=true,citecolor=red,linkcolor=red,hypertexnames=false]{hyperref}

\numberwithin{equation}{section}
\theoremstyle{plain}
\newtheorem{theorem}{Theorem}[section]
\newtheorem{lemma}[theorem]{Lemma}
\newtheorem{corollary}[theorem]{Corollary}
\newtheorem{proposition}[theorem]{Proposition}

\theoremstyle{definition}
\newtheorem{definition}[theorem]{Definition}
\newtheorem{example}[theorem]{Example}
\newtheorem{question}[theorem]{Question}

\theoremstyle{remark}
\newtheorem{remark}[theorem]{Remark}

\title[Additive energy and Fourier complexity of graphs]{Additive energy and Fourier complexity of graphs}
\author{V. Gupta}
\address{Department of Mathematics, University of Rochester}
\email{vishalgupta@rochester.edu}
\author{S. Ferguson}
\address{Department of Mathematics, New York University}
\email{sjf370@nyu.edu}
\author{A. Iosevich}
\thanks{The third listed author acknowledges partial support from the National Science Foundation under grant DMS-2506858 and from the 2026 Simons Fellowship.}
\address{Department of Mathematics, University of Rochester}
\email{alex.iosevich@rochester.edu}
\author{J. Iosevich}
\address{Department of Applied Mathematics, Rochester Institute of Technology}
\email{joshuaiosevich@gmail.com}
\author{H. Tian}
\address{Department of Mathematics, University of Rochester}
\email{htian6@ur.rochester.edu}
\author{B. Song}
\address{Pittsford Sutherland High School}
\email{pandadragon1624@gmail.com}
\date{July 20, 2026}

\begin{document}

\begin{abstract}
We study the additive energy and the Fourier approximation of graph adjacency matrices labeled by a cyclic group. For a graph with $N$ vertices, $s$ edges and adjacency matrix $M$, the additive energy of its directed edge set is at most $\min\{(2s)^3,N^2\operatorname{tr}(M^4)\}$, and equality in the spectral bound holds exactly when the labeling attains the graph-energy lower bound for the Fourier ratio. Such a labeling also minimizes every best $k$-term Fourier approximation error, whose square equals the tail of the squared adjacency singular values. We prove that graphs of odd prime-power order admit an equality-attaining labeling exactly when they are cyclic Cayley graphs, and a connected example on $15$ vertices shows that equality need not imply vertex transitivity at other orders.

We construct regular graph families with asymptotically maximal normalized additive edge energy and unbounded minimum Fourier ratio, one of them with density tending to one half. For dense graph sequences under uniformly random labelings, the collision dimension differs from the reciprocal squared edge density by a quantity tending to zero in probability. We also prove a quantitative stability estimate for the graph-energy bound and show why its dependence on a spectral scale cannot be omitted, even for regular graphs. For a rank-$m$ orthogonal projector we prove $m\leq D_4\leq\operatorname{FR}^2$, characterize the equality $D_4=m$ by cyclic translation invariance, and obtain a quantitative approximation by a circulant projector.
\end{abstract}

\maketitle
{\setlength{\parskip}{0pt}\tableofcontents}

\section{Introduction}
\label{section:introduction}

Relabeling a graph preserves its adjacency spectrum, but it can change the Fourier transform of its adjacency matrix, and we would like to know what the spectrum tells us about the best Fourier representation. The fourth Fourier moment provides a place to begin, since it counts additive quadruples of directed edges, and a spectral bound for this count singles out the labelings that also minimize every best $k$-term Fourier approximation error.

Let $G=(V,E)$ be a simple undirected graph with $N$ vertices and $s>0$ edges. A labeling is a bijection $\pi:{\mathbb Z}_N\to V$, and it determines the directed edge set
$$
A_\pi=\{(x,y)\in{\mathbb Z}_N^2:\{\pi(x),\pi(y)\}\in E\}
$$
together with the adjacency function $f_\pi=1_{A_\pi}$. Each edge contributes both orientations, so $|A_\pi|=2s$. Let $M_\pi$ be the adjacency matrix with entries $f_\pi(x,y)$. We use counting measure and the unitary Fourier transform
$$
\widehat f(m,n)=\frac{1}{N}\sum_{x,y\in{\mathbb Z}_N}f(x,y)e^{-\frac{2\pi i(mx+ny)}{N}},
$$
and for a nonzero function we define its Fourier ratio and its Fourier collision dimension by
$$
\operatorname{FR}(f)=\frac{\|\widehat f\|_1}{\|f\|_2}
\text{ and }
D_4(f)=\frac{\|f\|_2^4}{\|\widehat f\|_4^4}.
$$
The quantity $\operatorname{FR}(f)^2$ is the numerical sparsity of the Fourier vector \cite{BandeiraLewisMixon2018}, while $D_4(f)$ is the reciprocal collision probability of the probability distribution proportional to $|\widehat f|^2$, which is also the exponential of its order-two R\'enyi entropy \cite{Renyi1961}. Both quantities measure the number of significant Fourier coefficients, and $D_4(f)\leq\operatorname{FR}(f)^2$.

Gupta and Iosevich \cite{GuptaIosevich2026} introduced the edge complexity of a graph as the minimum Fourier ratio over vertex labelings and proved that
$$
\operatorname{FR}_{\min}(G)\geq\frac{\mathcal E(G)}{\sqrt{2s}},
$$
where $\mathcal E(G)=\sum_{j=1}^N|\lambda_j|$ is the graph energy and $\lambda_1,\ldots,\lambda_N$ are the adjacency eigenvalues. The equality cases were characterized in \cite{GuptaIosevichEtAl2026}, where it was shown that a labeling attains equality exactly when the Fourier transform of its adjacency matrix has at most one nonzero entry in each row and column, zero rows and columns being allowed, and that every graph admitting such a labeling is regular. We call such a labeling sharp, and we recall the short proofs of these facts below. The computational problem is already difficult for weighted graphs, since Gupta and Iosevich \cite{GuptaIosevichWeighted2026} proved that approximating weighted edge complexity to a specified inverse-polynomial additive accuracy is NP-hard.

For a finite subset $B$ of an abelian group, its additive energy is
$$
E_+(B)=|\{(b_1,b_2,b_3,b_4)\in B^4:b_1+b_2=b_3+b_4\}|.
$$
Write $E_{\mathrm{edge}}(G,\pi)=E_+(A_\pi)$ and $D_4(G,\pi)=D_4(f_\pi)$. The Fourier expression for additive energy shows that
\begin{equation}
D_4(G,\pi)=\frac{N^2(2s)^2}{E_{\mathrm{edge}}(G,\pi)},
\label{equation:introduction-energy}
\end{equation}
and we prove the bound
\begin{equation}
E_{\mathrm{edge}}(G,\pi)\leq\min\{(2s)^3,N^2\operatorname{tr}(M_\pi^4)\}.
\label{equation:introduction-spectral-cap}
\end{equation}
The first term follows by counting triples, while the second uses the adjacency spectrum, and equality in it holds exactly for sharp labelings. For regular graphs the spectral term is always at least as strong as the elementary term, whereas for irregular graphs either can be smaller. A rank argument then shows that a sharp labeling minimizes every best $k$-term Fourier approximation error at once, the square of that error being the tail of the squared adjacency singular values (Theorem~\ref{theorem:spectral-best-k}).

The equality criterion also leads to structural questions. In Section~\ref{section:classification} we describe its action on Fourier characters and use the arithmetic of the cyclic group to prove a complete classification at odd prime-power orders, where every sharp labeling turns out to be circulant. At order $15$, on the other hand, we construct a connected sharp graph that is not vertex-transitive and hence is not a Cayley graph of any group. For general orders the Fourier description leaves explicit conditions requiring the recovered matrix to have entries in $\{0,1\}$ and zero diagonal.

For a dense graph the constant Fourier coefficient can account for most of the fourth moment while many small coefficients make the Fourier ratio large, and centering the adjacency function removes this constant term. We compute both centered and uncentered quantities for several examples, and we construct connected regular graphs whose normalized additive edge energy tends to one while their minimum Fourier ratio tends to infinity. A second construction has density tending to one half under specified labelings. Under a uniformly random labeling, the collision dimension is asymptotic in probability to the reciprocal squared edge density whenever that density stays bounded away from zero.

We next ask what remains true near equality. For adjacency matrices, relative near-equality in the graph-energy bound yields an approximation by a matrix with at most one nonzero entry in each row and column, but the estimate contains an additional spectral scale, and a regular graph example shows why this dependence is necessary. Orthogonal projectors behave better. For every nonzero rank-$m$ orthogonal projector $\Pi$ we prove
$$
m\leq D_4(\Pi)\leq\operatorname{FR}(\Pi)^2
$$
and find a circulant rank-$m$ orthogonal projector $Q$ satisfying
$$
\frac{\|\Pi-Q\|_{\mathrm F}^2}{\|\Pi\|_{\mathrm F}^2}\leq2\left(1-\frac{m}{D_4(\Pi)}\right),
$$
and the equality $D_4(\Pi)=m$ turns out to be equivalent to cyclic translation invariance of the range of $\Pi$.

Our approximation bounds count the Fourier frequencies needed to represent an adjacency function to a prescribed accuracy. To connect these bounds with recovery from observed adjacency entries one must specify how the entries are observed and prove what the observations determine, and we refer to \cite{AldalehEtAl2025,BursteinIosevichNathan2026,GuptaIosevich2026} for recovery results in the Fourier-ratio setting. We also apply the Reiher--Schoen form of the Balog--Szemer\'edi--Gowers theorem \cite{ReiherSchoen2024} to obtain a large subset of the directed edge set with a small difference set.

All norms on functions use counting measure. We write $X\asymp Y$ for two-sided bounds with absolute constants, $X\asymp_\eta Y$ when the constants may depend on $\eta$, and $X\sim Y$ when $\frac{X}{Y}\to1$. Weighted additive energy is denoted by $\mathcal E_+(f)$ and graph energy by $\mathcal E(G)$.

\section{Fourier analysis, energy, and collision dimension}
\label{section:finite-fourier}

Let $H$ be a finite abelian group of order $M$. A character of $H$ is a homomorphism from $H$ to the complex numbers of modulus one, the characters form the dual group $\widehat H$, and after normalization by $M^{-\frac{1}{2}}$ they form an orthonormal basis under counting measure. Our Fourier conventions are therefore
\begin{equation}
\widehat f(\gamma)=M^{-\frac{1}{2}}\sum_{x\in H}f(x)\overline{\gamma(x)},
\label{equation:finite-fourier-transform}
\end{equation}
\begin{equation}
f(x)=M^{-\frac{1}{2}}\sum_{\gamma\in\widehat H}\widehat f(\gamma)\gamma(x),
\label{equation:finite-fourier-inversion}
\end{equation}
and
\begin{equation}
\|\widehat f\|_2=\|f\|_2.
\label{equation:plancherel}
\end{equation}
See \cite{Terras1999,TaoVu2006} for character orthogonality and Fourier analysis on finite groups.

\begin{definition}
For $f\neq0$, its Fourier ratio and Fourier collision dimension are
$$
\operatorname{FR}(f)=\frac{\|\widehat f\|_1}{\|\widehat f\|_2}
\text{ and }
D_4(f)=\frac{\|\widehat f\|_2^4}{\|\widehat f\|_4^4}.
$$
\end{definition}

For a nonzero vector $v$, its numerical sparsity is $\operatorname{ns}(v)=\frac{\|v\|_1^2}{\|v\|_2^2}$, so that $\operatorname{FR}(f)^2=\operatorname{ns}(\widehat f)$ \cite{BandeiraLewisMixon2018}. To understand the collision dimension, consider the probability distribution
$$
p_f(\gamma)=\frac{|\widehat f(\gamma)|^2}{\|f\|_2^2}.
$$
Two independent samples from this distribution agree with probability $\sum_\gamma p_f(\gamma)^2$, and the reciprocal of this probability is
$$
D_4(f)=\frac{1}{\sum_\gamma p_f(\gamma)^2}=e^{H_2(p_f)},
$$
where $H_2(p)=-\log\sum_\gamma p(\gamma)^2$ is the order-two R\'enyi entropy \cite{Renyi1961}. For example, if exactly $r$ Fourier coefficients are nonzero and they all have the same modulus, then $D_4(f)=r$.

For the unnormalized convolution $(f*g)(z)=\sum_xf(x)g(z-x)$, the substitution $z=x+y$ in the Fourier sum separates the two variables, and we obtain
\begin{equation}
\widehat{f*g}(\gamma)
=\frac{1}{\sqrt M}\left(\sum_xf(x)\overline{\gamma(x)}\right)\left(\sum_yg(y)\overline{\gamma(y)}\right)
=\sqrt M\widehat f(\gamma)\widehat g(\gamma).
\label{equation:convolution-transform}
\end{equation}

\begin{definition}
The weighted additive energy of $f:H\to{\mathbb C}$ is
$$
\mathcal E_+(f)=\sum_{\substack{x+y=x'+y'\\x,y,x',y'\in H}}f(x)f(y)\overline{f(x')f(y')}.
$$
For a subset $A\subset H$, its additive energy is
$$
E_+(A)=|\{(a_1,a_2,a_3,a_4)\in A^4:a_1+a_2=a_3+a_4\}|,
$$
so that $E_+(A)=\mathcal E_+(1_A)$.
\end{definition}

\begin{proposition}[Energy identity]
\label{proposition:general-energy-identity}
For every $f:H\to{\mathbb C}$,
$$
\mathcal E_+(f)=\|f*f\|_2^2=M\sum_\gamma|\widehat f(\gamma)|^4.
$$
In particular, if $f\neq0$, then
$$
D_4(f)=\frac{M\|f\|_2^4}{\mathcal E_+(f)}.
$$
\end{proposition}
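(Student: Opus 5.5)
The first equality $\mathcal E_+(f)=\|f*f\|_2^2$ comes from regrouping the defining sum by the common value $z=x+y=x'+y'$. For fixed $z\in H$, the pairs $(x,y)$ with $x+y=z$ are parametrized by $x$, with $y=z-x$. So the inner sum over these pairs is $\sum_x f(x)f(z-x)=(f*f)(z)$. The primed pairs give the complex conjugate $\overline{(f*f)(z)}$. Summing over $z$ yields
$$
\mathcal E_+(f)=\sum_{z\in H}|(f*f)(z)|^2=\|f*f\|_2^2 .
$$
This step only needs care in checking that the conjugation lands on the $x',y'$ factors, as in the definition.

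For the second equality, apply Plancherel \eqref{equation:plancherel} to $f*f$, giving $\|f*f\|_2^2=\|\widehat{f*f}\|_2^2$. Then use the convolution formula \eqref{equation:convolution-transform} with $g=f$, which gives $\widehat{f*f}(\gamma)=\sqrt M\,\widehat f(\gamma)^2$. Squaring moduli and summing over $\widehat H$ gives
$$
\|f*f\|_2^2=M\sum_\gamma|\widehat f(\gamma)|^4 .
$$
The only point to watch is the normalization: the factor $\sqrt M$ in \eqref{equation:convolution-transform} is what produces the factor $M$. This bookkeeping is the main (mild) obstacle.

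Finally, for $f\neq0$, the definition gives $D_4(f)=\|\widehat f\|_2^4/\|\widehat f\|_4^4$. Plancherel replaces $\|\widehat f\|_2^4$ by $\|f\|_2^4$, and the identity just proved gives $\|\widehat f\|_4^4=\mathcal E_+(f)/M$. Substituting these two expressions gives
$$
D_4(f)=\frac{M\|f\|_2^4}{\mathcal E_+(f)} .
$$
Here $\mathcal E_+(f)>0$ since $\widehat f\neq0$, so the division is legitimate.
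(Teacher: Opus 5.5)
Your proof is correct and follows the paper's argument: you regroup the defining sum by the common value $z$ of $x+y$ and $x'+y'$ to obtain $\|f*f\|_2^2$, then apply Plancherel and the convolution identity, and finally combine the definition of $D_4$ with Plancherel. Your observation that $\mathcal E_+(f)=M\|\widehat f\|_4^4>0$ for $f\neq0$ justifies the final division, which the paper leaves implicit.
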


\begin{proof}
Grouping the terms of $\mathcal E_+(f)$ according to the common value $z$ of $x+y$ and $x'+y'$ turns the sum into
$$
\|f*f\|_2^2=\sum_z\left|\sum_{x+y=z}f(x)f(y)\right|^2=\mathcal E_+(f),
$$
and Plancherel together with the equation \eqref{equation:convolution-transform} then shows that
$$
\|f*f\|_2^2=\|\widehat{f*f}\|_2^2=M\sum_\gamma|\widehat f(\gamma)|^4.
$$
The last assertion follows from the definition of $D_4$ and Plancherel.
\end{proof}

\begin{theorem}[Collision hierarchy]
\label{theorem:collision-hierarchy-general}
For every nonzero $f:H\to{\mathbb C}$,
$$
1\leq D_4(f)\leq\operatorname{FR}(f)^2\leq|\operatorname{supp}(\widehat f)|\leq M.
$$
Equality $D_4(f)=\operatorname{FR}(f)^2$ holds if and only if all nonzero Fourier coefficients have the same modulus.
\end{theorem}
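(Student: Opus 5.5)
The plan is to work entirely with the vector $a=(|\widehat f(\gamma)|)_{\gamma\in\widehat H}$, which is nonzero by Plancherel, and let $S=\operatorname{supp}(\widehat f)$ with $r=|S|$. By the definitions, $D_4(f)=\|a\|_2^4/\|a\|_4^4$ and $\operatorname{FR}(f)^2=\|a\|_1^2/\|a\|_2^2$. Each inequality in the chain is then a standard comparison of $\ell^p$ norms on a finite set, and I will check them one at a time from left to right.

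For $1\leq D_4(f)$, I would use $\|a\|_4^4=\sum a_\gamma^4\leq(\sum a_\gamma^2)^2=\|a\|_2^4$, which holds because the cross terms are nonnegative. For the central inequality $D_4\leq\operatorname{FR}^2$, the inequality to prove is $\|a\|_2^6\leq\|a\|_1^2\|a\|_4^4$. I would obtain it from Cauchy--Schwarz applied to $\sum a_\gamma^2=\sum a_\gamma^{1/2}\cdot a_\gamma^{3/2}$. This gives $\|a\|_2^2\leq\|a\|_1^{1/2}\,(\sum a_\gamma^3)^{1/2}$. A second Cauchy--Schwarz, writing $\sum a_\gamma^3=\sum a_\gamma\cdot a_\gamma^2$, gives $\sum a_\gamma^3\leq\|a\|_2\|a\|_4^2$. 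Combining the two yields $\|a\|_2^4\leq\|a\|_1\|a\|_2\|a\|_4^2$, that is $\|a\|_2^3\leq\|a\|_1\|a\|_4^2$, and squaring gives exactly the required inequality. Equivalently, this is H\"older or log-convexity of $p\mapsto\|a\|_p$ with $\tfrac12=\tfrac{1/3}{1}+\tfrac{2/3}{4}$. For $\operatorname{FR}^2\leq r$, I would apply Cauchy--Schwarz to $\sum_{\gamma\in S}a_\gamma\cdot1$, and $r\leq M$ holds because $|\widehat H|=M$.

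The main obstacle is the equality case. If all nonzero $a_\gamma$ equal a common value $c$, direct computation gives $D_4=r^2c^4/(rc^4)=r$ and $\operatorname{FR}^2=r^2c^2/(rc^2)=r$. Conversely, suppose $D_4=\operatorname{FR}^2$. Then both Cauchy--Schwarz steps must be equalities, and the cleanest one to use is the second. Equality in $\sum a_\gamma\cdot a_\gamma^2\leq\|a\|_2\|a\|_4^2$ forces the vectors $(a_\gamma)$ and $(a_\gamma^2)$ to be proportional, so $a_\gamma^2=\lambda a_\gamma$ for all $\gamma$. Hence every nonzero $a_\gamma$ equals $\lambda$, and all nonzero Fourier coefficients have the same modulus.

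I would check that the chain of equalities genuinely forces equality in the second step. The intermediate inequality $\|a\|_2^2\leq\|a\|_1^{1/2}(\sum a^3)^{1/2}$ is followed by $(\sum a^3)^{1/2}\leq(\|a\|_2\|a\|_4^2)^{1/2}$, and all quantities are positive. So equality of the endpoints forces equality in each link, and in particular in the second.
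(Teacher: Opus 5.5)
Your proof is correct and follows essentially the same route as the paper's. The paper gets $\|a\|_2^6\leq\|a\|_1^2\|a\|_4^4$ from a single H\"older inequality with exponents $\frac{3}{2}$ and $3$ applied to $a_\gamma^2=a_\gamma^{2/3}a_\gamma^{4/3}$, while you split this into two Cauchy--Schwarz steps; your equality analysis through the proportionality of $(a_\gamma)$ and $(a_\gamma^2)$ reaches the same equal-modulus conclusion, and the other inequalities are handled as in the paper.
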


\begin{proof}
Set $a_\gamma=|\widehat f(\gamma)|$. The first inequality is $\|a\|_4\leq\|a\|_2$, and H\"older's inequality with the exponents $\frac{3}{2}$ and $3$ yields
$$
\sum_\gamma a_\gamma^2=\sum_\gamma a_\gamma^{\frac{2}{3}}a_\gamma^{\frac{4}{3}}
\leq\left(\sum_\gamma a_\gamma\right)^{\frac{2}{3}}\left(\sum_\gamma a_\gamma^4\right)^{\frac{1}{3}},
$$
which after cubing and dividing becomes $D_4(f)\leq\operatorname{FR}(f)^2$. Equality in H\"older's inequality requires $a_\gamma$ and $a_\gamma^4$ to be proportional where they are positive, and this is the equal-modulus condition. Finally, the Cauchy--Schwarz inequality on the support shows that $\|\widehat f\|_1\leq|\operatorname{supp}(\widehat f)|^{\frac{1}{2}}\|\widehat f\|_2$, and since $|\widehat H|=M$ the remaining inequalities follow.
\end{proof}

\begin{corollary}
\label{corollary:energy-fr-general}
For every nonzero $f:H\to{\mathbb C}$,
$$
\operatorname{FR}(f)\geq\frac{\sqrt M\|f\|_2^2}{\sqrt{\mathcal E_+(f)}}.
$$
\end{corollary}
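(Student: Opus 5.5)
The corollary should follow by combining the middle inequality of Theorem~\ref{theorem:collision-hierarchy-general} with the energy identity of Proposition~\ref{proposition:general-energy-identity}. I will pass from $D_4$ to $\operatorname{FR}$ and then rewrite $D_4$ in terms of the additive energy.

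First, $f\neq0$, so $\mathcal E_+(f)=M\sum_\gamma|\widehat f(\gamma)|^4$ is strictly positive. This holds because Plancherel gives $\widehat f\neq0$. Hence the right-hand side of the corollary is well defined.

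Next, Theorem~\ref{theorem:collision-hierarchy-general} gives $\operatorname{FR}(f)^2\geq D_4(f)$. Proposition~\ref{proposition:general-energy-identity} gives
$$
D_4(f)=\frac{M\|f\|_2^4}{\mathcal E_+(f)}.
$$
Therefore
$$
\operatorname{FR}(f)^2\geq\frac{M\|f\|_2^4}{\mathcal E_+(f)}.
$$
Both sides are nonnegative, so taking square roots yields $\operatorname{FR}(f)\geq\sqrt M\|f\|_2^2/\sqrt{\mathcal E_+(f)}$, which is the claim.

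There is no real obstacle. The only points to check are that $\mathcal E_+(f)$ is a nonnegative real number, which is clear from $\mathcal E_+(f)=\|f*f\|_2^2$, and that it is nonzero, which was handled above.
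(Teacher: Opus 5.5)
Your proposal is correct and follows the paper's own proof exactly: you substitute the energy identity $D_4(f)=\frac{M\|f\|_2^4}{\mathcal E_+(f)}$ into $D_4(f)\leq\operatorname{FR}(f)^2$ and take square roots. You also check that $\mathcal E_+(f)>0$, which the paper leaves implicit.
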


\begin{proof}
Substitute the energy identity into $D_4(f)\leq\operatorname{FR}(f)^2$ and take square roots.
\end{proof}

\begin{proposition}[Indicator uncertainty]
\label{proposition:indicator-uncertainty-general}
If $A\subset H$ is nonempty, then $E_+(A)\leq|A|^3$ and
$$
\operatorname{FR}(1_A)^2\geq D_4(1_A)=\frac{M|A|^2}{E_+(A)}\geq\frac{M}{|A|}.
$$
\end{proposition}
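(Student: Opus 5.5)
The plan is to prove the three pieces of the display separately, using only results already established. The triple-counting bound $E_+(A)\leq|A|^3$ is elementary. A quadruple $(a_1,a_2,a_3,a_4)\in A^4$ with $a_1+a_2=a_3+a_4$ is determined by $(a_1,a_2,a_3)$, because then $a_4=a_1+a_2-a_3$. The map sending a solution to its first three coordinates is therefore injective into $A^3$, which gives $E_+(A)\leq|A|^3$.

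Next comes the middle identity. Apply Proposition~\ref{proposition:general-energy-identity} to $f=1_A$. It is nonzero since $A\neq\emptyset$, and we have $\|1_A\|_2^2=|A|$ and $\mathcal E_+(1_A)=E_+(A)$. This gives
$$
D_4(1_A)=\frac{M|A|^2}{E_+(A)}.
$$
Combining this with $E_+(A)\leq|A|^3$ yields the last inequality $D_4(1_A)\geq\frac{M}{|A|}$. Also note $E_+(A)\geq|A|^2>0$, since the trivial solutions $a_1=a_3$, $a_2=a_4$ already give $|A|^2$ quadruples; hence the quotient is well defined.

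The first inequality, $\operatorname{FR}(1_A)^2\geq D_4(1_A)$, is the case $f=1_A$ of Theorem~\ref{theorem:collision-hierarchy-general}.

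No step is a real obstacle. The only points needing care are that the normalizations match: the energy identity uses counting measure and the same unitary transform that defines $D_4$. Everything else is direct substitution.
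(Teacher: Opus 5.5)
Your proof is correct and follows the paper's argument essentially verbatim. Triple counting gives $E_+(A)\leq|A|^3$, the energy identity with $\|1_A\|_2^2=|A|$ gives the formula for $D_4(1_A)$, and the collision hierarchy gives $\operatorname{FR}(1_A)^2\geq D_4(1_A)$; your remark that $E_+(A)\geq|A|^2>0$ is a harmless detail the paper leaves implicit.
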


\begin{proof}
The first three entries of an additive quadruple determine the fourth through $a_4=a_1+a_2-a_3$, so there are at most $|A|^3$ such quadruples. The remaining statements follow from the energy identity, from $\|1_A\|_2^2=|A|$, and from the collision hierarchy.
\end{proof}

\section{Additive edge energy}
\label{section:edge-energy}

A labeling turns the adjacency relation of a graph into a subset of ${\mathbb Z}_N^2$, and the energy identity of the preceding section then relates additive quadruples of edges to the Fourier coefficients of the adjacency matrix. Throughout this section $G=(V,E)$ is a finite simple undirected graph with $N$ vertices and $s>0$ edges.

\begin{definition}
A labeling of $G$ is a bijection $\pi:{\mathbb Z}_N\longrightarrow V$. The directed edge set associated with $\pi$ is
$$
A_\pi=\{(x,y)\in{\mathbb Z}_N^2:\{\pi(x),\pi(y)\}\in E\},
$$
and the adjacency function associated with $\pi$ is $f_\pi=1_{A_\pi}$.
\end{definition}

Because $G$ is undirected, $(x,y)\in A_\pi$ if and only if $(y,x)\in A_\pi$, and since there are no loops, $(x,x)\notin A_\pi$. Each undirected edge contributes exactly two ordered pairs, so $|A_\pi|=2s$.

On ${\mathbb Z}_N^2$ the unitary Fourier transform takes the form
\begin{equation}
\widehat f(m,n)=\frac{1}{N}\sum_{x,y\in{\mathbb Z}_N}f(x,y)e^{-\frac{2\pi i(mx+ny)}{N}},
\label{equation:two-dimensional-transform}
\end{equation}
the normalizing factor being $\frac{1}{N}$ because $|{\mathbb Z}_N^2|=N^2$, and consequently $\|\widehat f\|_2=\|f\|_2$. It is useful to write this transform as a matrix product. Let
$$
U_{m,x}=\frac{1}{\sqrt N}e^{-\frac{2\pi imx}{N}}
$$
be the unitary Fourier matrix, so that $U^*U=UU^*=I$, and let $M_\pi=(f_\pi(x,y))_{x,y\in{\mathbb Z}_N}$ be the adjacency matrix under the labeling $\pi$. Since $U$ is symmetric, the formula \eqref{equation:two-dimensional-transform} says that
\begin{equation}
\widehat M_\pi=UM_\pi U,
\label{equation:matrix-fourier-transform}
\end{equation}
where the entries of $\widehat M_\pi$ are the Fourier coefficients of $f_\pi$.

For a matrix $B$ we write $\|B\|_{\mathrm F}$ for the square root of the sum of its squared entry moduli, $\|B\|_{\mathrm{op}}$ for its largest singular value, and $\|B\|_*$ for the sum of its singular values. These are the Frobenius, operator and nuclear norms, and in particular the entrywise function norm $\|B\|_2$ equals $\|B\|_{\mathrm F}$. We write $\operatorname{tr}(B)$ for the sum of the diagonal entries of a square matrix, and we shall use repeatedly the fact that left and right multiplication by unitary matrices preserve singular values.

\begin{definition}
The Fourier ratio and the Fourier collision dimension of $G$ under the labeling $\pi$ are
$$
\operatorname{FR}(G,\pi)=\operatorname{FR}(f_\pi)=\frac{\|\widehat f_\pi\|_1}{\|\widehat f_\pi\|_2}
\text{ and }
D_4(G,\pi)=D_4(f_\pi)=\frac{\|\widehat f_\pi\|_2^4}{\|\widehat f_\pi\|_4^4}.
$$
The minimum Fourier ratio is
$$
\operatorname{FR}_{\min}(G)=\min_{\pi}\operatorname{FR}(G,\pi),
$$
where the minimum is taken over all labelings of $G$ by ${\mathbb Z}_N$.
\end{definition}

\begin{definition}
The additive edge energy of $G$ under the labeling $\pi$ is
$$
E_{\mathrm{edge}}(G,\pi)=E_+(A_\pi)=|\{(a_1,a_2,a_3,a_4)\in A_\pi^4:a_1+a_2=a_3+a_4\}|,
$$
where the addition is coordinatewise in ${\mathbb Z}_N^2$.
\end{definition}

\begin{theorem}[Edge-energy identity]
\label{theorem:edge-energy-identity}
Let $G$ be a simple graph with $N$ vertices and $s>0$ edges, and let $\pi$ be a labeling. Then
$$
E_{\mathrm{edge}}(G,\pi)=N^2\sum_{m,n\in{\mathbb Z}_N}|\widehat f_\pi(m,n)|^4
$$
and
\begin{equation}
D_4(G,\pi)=\frac{N^2(2s)^2}{E_{\mathrm{edge}}(G,\pi)}.
\label{equation:graph-collision-dimension}
\end{equation}
\end{theorem}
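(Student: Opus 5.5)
This is a direct specialization of Proposition~\ref{proposition:general-energy-identity} to the group $H={\mathbb Z}_N^2$, so the plan is simply to check that the conventions match and then substitute.

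First, observe that $H={\mathbb Z}_N^2$ has order $M=N^2$. Its characters are $\gamma_{m,n}(x,y)=e^{\frac{2\pi i(mx+ny)}{N}}$ for $(m,n)\in{\mathbb Z}_N^2$. With these, the normalization $M^{-\frac12}=\frac1N$ in \eqref{equation:finite-fourier-transform} agrees exactly with the transform \eqref{equation:two-dimensional-transform}. The only point needing care is this bookkeeping: the general result is stated with $M^{-1/2}$ and a factor $M$ in front, and here $M=N^2$, not $N$.

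Next, apply Proposition~\ref{proposition:general-energy-identity} with $f=f_\pi=1_{A_\pi}$. The definition gives $E_{\mathrm{edge}}(G,\pi)=E_+(A_\pi)=\mathcal E_+(1_{A_\pi})$, and the proposition yields
$$
E_{\mathrm{edge}}(G,\pi)=N^2\sum_{m,n}|\widehat f_\pi(m,n)|^4,
$$
which is the first claim.

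For \eqref{equation:graph-collision-dimension}, note that $f_\pi\neq0$ because $s>0$. We have $\|f_\pi\|_2^2=|A_\pi|=2s$, since each undirected edge contributes two ordered pairs and there are no loops. The second assertion of Proposition~\ref{proposition:general-energy-identity} then gives
$$
D_4(f_\pi)=\frac{M\|f_\pi\|_2^4}{\mathcal E_+(f_\pi)}=\frac{N^2(2s)^2}{E_{\mathrm{edge}}(G,\pi)}.
$$
Equivalently, one can substitute the first identity into the definition of $D_4$ and use Plancherel, $\|\widehat f_\pi\|_2^2=\|f_\pi\|_2^2=2s$. There is no real obstacle beyond confirming that the count $|A_\pi|=2s$ and the normalizations are consistent.
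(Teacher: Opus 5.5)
Your proposal is correct and matches the paper's proof, which likewise applies Proposition~\ref{proposition:general-energy-identity} to $H={\mathbb Z}_N^2$ with $f=1_{A_\pi}$, $|H|=N^2$ and $\|f\|_2^2=|A_\pi|=2s$. Your extra check that the normalization $M^{-\frac12}=\frac1N$ reproduces the two-dimensional transform is a harmless and sensible addition.
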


\begin{proof}
Apply Proposition~\ref{proposition:general-energy-identity} to $H={\mathbb Z}_N^2$ and $f=1_{A_\pi}$, for which $|H|=N^2$ and $\|f\|_2^2=|A_\pi|=2s$.
\end{proof}

\begin{corollary}[Energy obstruction for the Fourier ratio]
\label{corollary:graph-energy-obstruction-fr}
Under the hypotheses of Theorem~\ref{theorem:edge-energy-identity},
\begin{equation}
\operatorname{FR}(G,\pi)\geq\frac{2Ns}{\sqrt{E_{\mathrm{edge}}(G,\pi)}}
\label{equation:edge-energy-fr-lower}
\end{equation}
and
\begin{equation}
\operatorname{FR}(G,\pi)\geq\frac{N}{\sqrt{2s}}.
\label{equation:graph-uncertainty-lower}
\end{equation}
\end{corollary}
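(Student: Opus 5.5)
The first inequality \eqref{equation:edge-energy-fr-lower} should follow directly from Corollary~\ref{corollary:energy-fr-general}, applied with $H={\mathbb Z}_N^2$, $M=N^2$ and $f=f_\pi=1_{A_\pi}$. Since $|A_\pi|=2s$, we have $\|f_\pi\|_2^2=2s$, and $\mathcal E_+(f_\pi)=E_+(A_\pi)=E_{\mathrm{edge}}(G,\pi)$ by definition. The corollary then gives
$$
\operatorname{FR}(G,\pi)\geq\frac{\sqrt{N^2}\cdot 2s}{\sqrt{E_{\mathrm{edge}}(G,\pi)}}=\frac{2Ns}{\sqrt{E_{\mathrm{edge}}(G,\pi)}}.
$$
Alternatively, I could combine $D_4\leq\operatorname{FR}^2$ from Theorem~\ref{theorem:collision-hierarchy-general} with the formula \eqref{equation:graph-collision-dimension} of Theorem~\ref{theorem:edge-energy-identity} and take square roots.

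For \eqref{equation:graph-uncertainty-lower}, I will insert the trivial energy bound into the inequality just obtained. Proposition~\ref{proposition:indicator-uncertainty-general} gives $E_{\mathrm{edge}}(G,\pi)=E_+(A_\pi)\leq|A_\pi|^3=(2s)^3$, because the first three entries of an additive quadruple determine the fourth. Substituting this,
$$
\operatorname{FR}(G,\pi)\geq\frac{N\cdot 2s}{(2s)^{3/2}}=\frac{N}{\sqrt{2s}}.
$$
Equivalently, this is the bound $\operatorname{FR}(1_A)^2\geq M/|A|$ of Proposition~\ref{proposition:indicator-uncertainty-general} with $M=N^2$ and $|A|=2s$.

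There is no real obstacle here, since both bounds are direct specializations of earlier results. The only points to check are that the ambient group has order $N^2$, not $N$, and that $\|f_\pi\|_2^2$ counts directed edges, giving $2s$ rather than $s$. Both facts were recorded before Theorem~\ref{theorem:edge-energy-identity}.
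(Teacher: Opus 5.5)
Your proposal is correct and follows the paper's proof. The paper also gets the first bound from $D_4\leq\operatorname{FR}^2$ combined with \eqref{equation:graph-collision-dimension}, and the second by applying Proposition~\ref{proposition:indicator-uncertainty-general} to $A_\pi\subset{\mathbb Z}_N^2$ with $|H|=N^2$ and $|A_\pi|=2s$. Substituting $E_{\mathrm{edge}}\leq(2s)^3$ into the first bound amounts to the same step.
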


\begin{proof}
The first inequality follows from $D_4(G,\pi)\leq\operatorname{FR}(G,\pi)^2$ and \eqref{equation:graph-collision-dimension}. For the second, apply Proposition~\ref{proposition:indicator-uncertainty-general} to the set $A_\pi\subset{\mathbb Z}_N^2$, where $|{\mathbb Z}_N^2|=N^2$ and $|A_\pi|=2s$, to obtain $\operatorname{FR}(G,\pi)^2\geq\frac{N^2}{2s}$, and take square roots.
\end{proof}

Changing the labeling can change the additive edge energy, and maximizing over labelings removes this dependence.

\begin{definition}
Define
$$
E_{\mathrm{edge}}^{\max}(G)=\max_\pi E_{\mathrm{edge}}(G,\pi),
$$
where the maximum is over all labelings $\pi:{\mathbb Z}_N\to V$.
\end{definition}

\begin{theorem}[Labeling-independent energy bound]
\label{theorem:labeling-independent-energy-bound}
Let $G$ be a simple graph with $N$ vertices and $s>0$ edges. Then
\begin{equation}
\operatorname{FR}_{\min}(G)\geq\frac{2Ns}{\sqrt{E_{\mathrm{edge}}^{\max}(G)}},
\label{equation:fr-min-edge-energy}
\end{equation}
and since $E_{\mathrm{edge}}^{\max}(G)\leq(2s)^3$, this implies $\operatorname{FR}_{\min}(G)\geq\frac{N}{\sqrt{2s}}$.
\end{theorem}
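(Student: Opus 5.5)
The plan is to combine the pointwise bound of Corollary~\ref{corollary:graph-energy-obstruction-fr} with a minimization over labelings, and then to use the elementary triple-counting bound from Proposition~\ref{proposition:indicator-uncertainty-general}.

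First, fix an arbitrary labeling $\pi$. By \eqref{equation:edge-energy-fr-lower},
$$
\operatorname{FR}(G,\pi)\geq\frac{2Ns}{\sqrt{E_{\mathrm{edge}}(G,\pi)}}.
$$
By definition of $E_{\mathrm{edge}}^{\max}(G)$ as a maximum over the finite set of labelings, we have $E_{\mathrm{edge}}(G,\pi)\leq E_{\mathrm{edge}}^{\max}(G)$. Note that $E_{\mathrm{edge}}(G,\pi)\geq|A_\pi|^2=(2s)^2>0$, from the trivial quadruples $(a,b,a,b)$, so no division by zero occurs. Hence
$$
\operatorname{FR}(G,\pi)\geq\frac{2Ns}{\sqrt{E_{\mathrm{edge}}^{\max}(G)}}
$$
for every $\pi$. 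Since the right-hand side does not depend on $\pi$, and $\operatorname{FR}_{\min}(G)$ is a minimum over finitely many labelings and so is attained, taking the minimum gives \eqref{equation:fr-min-edge-energy}.

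For the consequence, apply Proposition~\ref{proposition:indicator-uncertainty-general} to $A_\pi\subset{\mathbb Z}_N^2$ with $|A_\pi|=2s$. This gives $E_{\mathrm{edge}}(G,\pi)\leq(2s)^3$ for every $\pi$, hence $E_{\mathrm{edge}}^{\max}(G)\leq(2s)^3$. Substituting,
$$
\frac{2Ns}{\sqrt{E_{\mathrm{edge}}^{\max}(G)}}\geq\frac{2Ns}{(2s)^{3/2}}=\frac{N}{\sqrt{2s}}.
$$

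There is no real obstacle; the only points needing care are the finiteness and positivity remarks that make the maximum and the minimum well defined.
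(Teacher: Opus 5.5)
Your proposal is correct and follows the paper's proof: the pointwise bound \eqref{equation:edge-energy-fr-lower}, then $E_{\mathrm{edge}}(G,\pi)\leq E_{\mathrm{edge}}^{\max}(G)$, then minimization over labelings, with the consequence obtained from the triple-counting bound of Proposition~\ref{proposition:indicator-uncertainty-general}. Your positivity remark via the trivial quadruples $(a,b,a,b)$ is a small addition that the paper leaves implicit.
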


\begin{proof}
For every $\pi$ we have
$$
\operatorname{FR}(G,\pi)\geq\frac{2Ns}{\sqrt{E_{\mathrm{edge}}(G,\pi)}}\geq\frac{2Ns}{\sqrt{E_{\mathrm{edge}}^{\max}(G)}},
$$
and it remains to take the minimum over labelings.
\end{proof}

We now recall a second labeling-independent lower bound from \cite{GuptaIosevich2026}. If $\lambda_1,\ldots,\lambda_N$ are the eigenvalues of the adjacency matrix of $G$, its graph energy is
$$
\mathcal E(G)=\sum_{j=1}^N|\lambda_j|.
$$
Graph energy was introduced by Gutman \cite{Gutman1978}, and we refer to \cite{LiShiGutman2012} for its theory. The following elementary matrix inequality explains its appearance here, and the inequality together with its equality case appears as Lemma 3 in Appendix C of Zhu, Hayashi and Chen \cite{ZhuHayashiChen2018}.

\begin{lemma}[Entrywise norm and singular values]
\label{lemma:nuclear-entrywise}
Let $B$ be a complex square matrix and let $\|B\|_*$ be the sum of its singular values. Then
$$
\|B\|_*\leq\sum_{i,j}|B_{ij}|,
$$
and equality holds if and only if each row and each column of $B$ has at most one nonzero entry.
\end{lemma}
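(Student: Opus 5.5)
The plan is to use the duality between the nuclear norm and the operator norm. By the singular value decomposition $B=V\Sigma W^*$, we have
$$
\|B\|_*=\operatorname{tr}(V^*BW)=\sum_{i,j}B_{ij}\,\overline{Z_{ij}},
$$
where $Z=VW^*$ is unitary. Since every entry of a unitary matrix satisfies $|Z_{ij}|\leq1$, the triangle inequality immediately gives $\|B\|_*\leq\sum_{i,j}|B_{ij}|$. An alternative route, which I would mention but not rely on, writes $B=\sum_{i,j}B_{ij}e_ie_j^T$ and uses that each rank-one matrix $e_ie_j^T$ has nuclear norm $1$, so the triangle inequality for $\|\cdot\|_*$ applies.

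\emph{Sufficiency in the equality case.} Suppose each row and column of $B$ has at most one nonzero entry. Then $B=PD$ for a permutation matrix $P$ and a diagonal matrix $D$ whose entries are the nonzero entries of $B$, together with zeros. Multiplication by the unitary $P$ preserves singular values, so the singular values of $B$ are the moduli $|D_{jj}|$. Hence $\|B\|_*=\sum_{i,j}|B_{ij}|$.

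\emph{Necessity in the equality case.} This is the main step. Equality in the chain
$$
\|B\|_*=\Bigl|\sum_{i,j} B_{ij}\overline{Z_{ij}}\Bigr|\leq\sum_{i,j}|B_{ij}||Z_{ij}|\leq\sum_{i,j}|B_{ij}|
$$
forces $|Z_{ij}|=1$ wherever $B_{ij}\neq0$. A unitary matrix has rows and columns of Euclidean norm $1$, so any row containing an entry of modulus $1$ has all its other entries equal to $0$, and the same holds for columns. Now suppose $B_{ij}\neq0$ and $B_{ik}\neq0$ with $j\neq k$. Then $|Z_{ij}|=|Z_{ik}|=1$, which is impossible because the $i$-th row of $Z$ has norm $1$. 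The identical argument applied to columns rules out two nonzero entries in the same column, which completes the proof.

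The only subtlety is that when $B$ is singular the SVD factors are not unique. This does not matter: the argument only needs \emph{some} unitary $Z$ with $\operatorname{tr}(Z^*B)=\|B\|_*$, and $Z=VW^*$ built from any full SVD, with square unitary $V$ and $W$, serves.
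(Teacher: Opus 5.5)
Your proof is correct and follows the paper's argument essentially verbatim. In both, the unitary $Z=VW^*$ from a singular value decomposition gives $\|B\|_*=\operatorname{tr}(Z^*B)\leq\sum_{i,j}|B_{ij}|$, equality forces $|Z_{ij}|=1$ on the support of $B$, the unit norms of the rows and columns of $Z$ then allow at most one such entry per row and column, and the converse reads the singular values off a permuted diagonal form.
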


\begin{proof}
Choose a singular value decomposition $B=V\Sigma W^*$ with $V,W$ unitary, and put $Q=VW^*$. Then
$$
\|B\|_*=\operatorname{Re}\operatorname{tr}(Q^*B)=\operatorname{Re}\sum_{i,j}\overline{Q_{ij}}B_{ij}\leq\sum_{i,j}|B_{ij}|,
$$
since every entry of a unitary matrix has modulus at most one. If equality holds, then each nonzero $B_{ij}$ requires $|Q_{ij}|=1$, and because a row or column of a unitary matrix has squared norm one, it contains at most one such entry, so the same support restriction holds for $B$. Conversely, if $B$ has the stated support, then its nonzero singular values are the moduli of its nonzero entries, whose sum is the entrywise $\ell^1$ norm.
\end{proof}

\begin{theorem}[Graph-energy bound, \cite{GuptaIosevich2026}]
\label{theorem:prior-graph-energy-bound}
Let $G$ be a simple graph with $s>0$ edges. Then
$$
\operatorname{FR}_{\min}(G)\geq\frac{\mathcal E(G)}{\sqrt{2s}}.
$$
\end{theorem}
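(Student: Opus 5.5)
The plan is to bound the numerator $\|\widehat f_\pi\|_1$ from below by a unitarily invariant quantity, and to identify the denominator with $\sqrt{2s}$. By Plancherel we have $\|\widehat f_\pi\|_2=\|f_\pi\|_2=\sqrt{|A_\pi|}=\sqrt{2s}$, so the theorem is equivalent to showing that
$$
\sum_{m,n}|\widehat f_\pi(m,n)|\geq\mathcal E(G)
$$
for every labeling $\pi$. Taking the minimum over $\pi$ then finishes the argument.

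For the numerator we use the matrix form \eqref{equation:matrix-fourier-transform}, namely $\widehat M_\pi=UM_\pi U$ with $U$ unitary. Lemma~\ref{lemma:nuclear-entrywise}, applied to $B=\widehat M_\pi$, gives
$$
\|\widehat M_\pi\|_*\leq\sum_{m,n}|\widehat f_\pi(m,n)|=\|\widehat f_\pi\|_1.
$$
Since left and right multiplication by unitary matrices preserve singular values, we get $\|\widehat M_\pi\|_*=\|M_\pi\|_*$.

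It remains to evaluate $\|M_\pi\|_*$. The matrix $M_\pi=P^{T}AP$ is a permutation conjugate of the adjacency matrix $A$ of $G$, where $P$ is the permutation matrix of $\pi$. It is therefore real symmetric with the same eigenvalues $\lambda_1,\ldots,\lambda_N$. For a real symmetric matrix the singular values are the absolute values of the eigenvalues, so $\|M_\pi\|_*=\sum_j|\lambda_j|=\mathcal E(G)$. Chaining the inequalities gives $\operatorname{FR}(G,\pi)\geq\mathcal E(G)/\sqrt{2s}$ for every $\pi$, and hence the bound for $\operatorname{FR}_{\min}(G)$.

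There is no real obstacle here. The only points that need care are the following:
\begin{itemize}
\item the normalization, namely that the $\frac1N$ in \eqref{equation:two-dimensional-transform} matches exactly the two factors $N^{-1/2}$ in $UM_\pi U$, so that no stray power of $N$ appears;
\item the fact that relabeling only permutes rows and columns simultaneously, so the spectrum is independent of $\pi$.
\end{itemize}

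The equality analysis is not part of this statement. It will follow later from the equality case of Lemma~\ref{lemma:nuclear-entrywise}.
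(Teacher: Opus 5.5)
Your proposal is correct and follows essentially the same route as the paper: you apply Lemma~\ref{lemma:nuclear-entrywise} to $\widehat M_\pi=UM_\pi U$, identify its nuclear norm with $\mathcal E(G)$ by unitary invariance, use Plancherel to get $\|\widehat f_\pi\|_2=\sqrt{2s}$, and minimize over labelings. Your extra remarks on the normalization and the permutation-conjugacy of $M_\pi$ only spell out steps the paper leaves implicit.
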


\begin{proof}
Apply Lemma~\ref{lemma:nuclear-entrywise} to $UM_\pi U$. Unitary multiplication preserves singular values, so its nuclear norm is $\mathcal E(G)$, whereas its Frobenius norm is $\sqrt{2s}$. Dividing and minimizing over labelings proves the theorem.
\end{proof}

We therefore have two lower bounds, one involving the adjacency spectrum and the other involving additive quadruples of edges.

\begin{corollary}
\label{corollary:three-way-graph-bound}
For every simple graph with $N$ vertices and $s>0$ edges,
$$
\operatorname{FR}_{\min}(G)\geq\max\left\{\frac{\mathcal E(G)}{\sqrt{2s}},\frac{2Ns}{\sqrt{E_{\mathrm{edge}}^{\max}(G)}}\right\}.
$$
\end{corollary}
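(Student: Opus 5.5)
The corollary combines the two lower bounds already proved, so the plan is simply to place them side by side. Both are valid lower bounds for the same quantity $\operatorname{FR}_{\min}(G)$, and any quantity bounded below by two numbers is bounded below by their maximum.

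First, I will invoke Theorem~\ref{theorem:prior-graph-energy-bound}, which gives $\operatorname{FR}_{\min}(G)\geq\mathcal E(G)/\sqrt{2s}$. That theorem needs only a simple graph with $s>0$ edges, which is our hypothesis.

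Second, I will invoke Theorem~\ref{theorem:labeling-independent-energy-bound}, which gives $\operatorname{FR}_{\min}(G)\geq 2Ns/\sqrt{E_{\mathrm{edge}}^{\max}(G)}$. One check is needed: the denominator must be positive. Every labeling has $E_{\mathrm{edge}}(G,\pi)\geq|A_\pi|^2/N^2>0$, since the trivial quadruples with $a_1=a_3$ and $a_2=a_4$ already number $(2s)^2$. So $E_{\mathrm{edge}}^{\max}(G)>0$ and the expression is well defined.

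Taking the maximum of the two right-hand sides completes the argument. No step here is a genuine obstacle; the only point needing care is confirming that both earlier results apply under the same hypotheses, and they do.
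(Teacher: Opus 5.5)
Your proposal is correct and matches the paper, which obtains the corollary by placing Theorem~\ref{theorem:prior-graph-energy-bound} and Theorem~\ref{theorem:labeling-independent-energy-bound} side by side and taking the maximum. One small slip in your positivity check: the trivial quadruples give $E_{\mathrm{edge}}(G,\pi)\geq(2s)^2=|A_\pi|^2$, not merely $|A_\pi|^2/N^2$, though either bound suffices for positivity.
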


\begin{remark}
The bound $E_{\mathrm{edge}}^{\max}(G)\leq(2s)^3$ shows that the additive term also contains the support-size uncertainty bound $\operatorname{FR}_{\min}(G)\geq\frac{N}{\sqrt{2s}}$.
\end{remark}

For a fixed labeling we can compare these bounds directly with the number of nonzero Fourier coefficients, since the graph-energy bound and Theorem~\ref{theorem:collision-hierarchy-general} imply
\begin{equation}
\max\left\{\frac{\mathcal E(G)^2}{2s},D_4(G,\pi)\right\}\leq\operatorname{FR}(G,\pi)^2\leq|\operatorname{supp}(\widehat f_\pi)|.
\label{equation:two-effective-dimensions}
\end{equation}
The first term on the left depends only on the graph, while the second can change with the labeling, through the additive energy of its directed edge set.

\subsection{Sharp labelings}
\label{subsection:energy-extremizers}

The equality criterion from \cite{GuptaIosevichEtAl2026} will identify the labelings that attain the spectral upper bound for additive edge energy, and we recall it by means of Lemma~\ref{lemma:nuclear-entrywise}.

\begin{definition}
A labeling $\pi$ of a graph $G$ with $s>0$ edges is called sharp if
$$
\operatorname{FR}(G,\pi)=\frac{\mathcal E(G)}{\sqrt{2s}}.
$$
\end{definition}

\begin{theorem}[Equality criterion, \cite{GuptaIosevichEtAl2026}]
\label{theorem:prior-equality-criterion}
Let $M_\pi$ be the adjacency matrix under a fixed labeling. The labeling is sharp if and only if every row and every column of $\widehat M_\pi=UM_\pi U$ contains at most one nonzero entry.
\end{theorem}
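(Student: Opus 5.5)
The plan is to read the equality criterion directly off the equality case of Lemma~\ref{lemma:nuclear-entrywise}, applied to $B=\widehat M_\pi=UM_\pi U$. The first step is to rewrite the Fourier ratio in matrix terms. By \eqref{equation:matrix-fourier-transform}, the entries of $\widehat M_\pi$ are exactly the Fourier coefficients of $f_\pi$, so
$$
\|\widehat f_\pi\|_1=\sum_{m,n}|(\widehat M_\pi)_{mn}|.
$$
By Plancherel, $\|\widehat f_\pi\|_2=\|f_\pi\|_2=\sqrt{2s}$. Hence
$$
\operatorname{FR}(G,\pi)=\frac{1}{\sqrt{2s}}\sum_{m,n}|(\widehat M_\pi)_{mn}|.
$$

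Next I will identify the nuclear norm of $\widehat M_\pi$ with the graph energy. Since $U$ is unitary, $\widehat M_\pi$ has the same singular values as $M_\pi$. Because $M_\pi$ is real symmetric, its singular values are $|\lambda_1|,\ldots,|\lambda_N|$. The eigenvalues do not depend on the labeling, since $M_\pi$ is obtained from any fixed adjacency matrix of $G$ by conjugating with a permutation matrix. Therefore
$$
\|\widehat M_\pi\|_*=\mathcal E(G).
$$

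It follows that sharpness, $\operatorname{FR}(G,\pi)=\mathcal E(G)/\sqrt{2s}$, holds exactly when $\sum_{m,n}|(\widehat M_\pi)_{mn}|=\|\widehat M_\pi\|_*$. This is precisely the equality case of Lemma~\ref{lemma:nuclear-entrywise} for $B=\widehat M_\pi$. The lemma says this equality holds if and only if every row and every column of $B$ has at most one nonzero entry, and that gives both directions of the theorem at once.

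I do not expect a real obstacle, since the lemma already contains the equality analysis. The one point that needs care is the identification of the nuclear norm with $\mathcal E(G)$ for every labeling. This rests on two facts: permutation conjugation preserves the spectrum, and singular values of a Hermitian matrix are the absolute values of its eigenvalues. Both are standard, but they should be stated explicitly.
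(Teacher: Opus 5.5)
Your proposal is correct and follows the paper's own argument. The paper likewise reduces sharpness to the equality case of Lemma~\ref{lemma:nuclear-entrywise} for $B=UM_\pi U$, using $\|B\|_*=\mathcal E(G)$ (unitary invariance of singular values) and $\|B\|_{\mathrm F}=\sqrt{2s}$; you simply make explicit the identifications that the paper leaves implicit, namely that the entries of $\widehat M_\pi$ are the Fourier coefficients and that the spectrum does not depend on the labeling.
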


\begin{proof}
Equality in the graph-energy bound is exactly equality in Lemma~\ref{lemma:nuclear-entrywise} for $B=UM_\pi U$.
\end{proof}

A matrix with at most one nonzero entry in every row and every column is called a weighted partial permutation matrix. Empty rows and columns are allowed, and the nonzero entries may have arbitrary weights. The equality criterion says that a sharp labeling has a Fourier transform of precisely this form.

\begin{theorem}[Regularity of sharp graphs, \cite{GuptaIosevichEtAl2026}]
\label{theorem:prior-extremizer-regularity}
Every graph with at least one edge that admits a sharp labeling is regular.
\end{theorem}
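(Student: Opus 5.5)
We will prove Theorem~\ref{theorem:prior-extremizer-regularity} by using the equality criterion (Theorem~\ref{theorem:prior-equality-criterion}) to show that the all-ones vector is an eigenvector of the adjacency matrix. Fix a sharp labeling $\pi$, write $M=M_\pi$ and $B=\widehat M_\pi=UMU$. By Theorem~\ref{theorem:prior-equality-criterion}, $B$ is a weighted partial permutation matrix, so $B^*B$ and $BB^*$ are diagonal. Since $M$ is real symmetric and $U$ is symmetric with $\overline U=U^*$, we get
$$
B^*B=U^*M U^*UMU=U^*M^2U,
$$
and hence $U^*M^2U$ is diagonal. Equivalently, $M^2$ is diagonalized by the Fourier basis, that is, $M^2=UDU^*$ with $D$ diagonal.

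A matrix diagonalized by the discrete Fourier basis is circulant. Conjugating a diagonal matrix by the DFT gives entries depending only on $x-y$. Thus $M^2$ is circulant, and in particular its row sums are all equal. The row sums of $M^2$ are $\sum_y (M^2)_{xy}=\sum_z M_{xz}\deg(z)$, which is the sum of the degrees of the neighbors of $x$. The diagonal entries $(M^2)_{xx}=\deg(x)$ are also all equal, because a circulant matrix has constant diagonal. So every vertex has the same degree, and the graph is regular.

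The only step needing care is the identity $U^*MU^*\cdot UMU$, specifically checking that $B^*=U^*MU^*$ when $U$ is symmetric and $M$ is real symmetric. This follows because $B^*=U^*M^*U^*$, so the product collapses to $U^*M^2U$. Two routine points remain: that "$U^*XU$ diagonal" means exactly that $X$ is circulant, and that the argument is unaffected by the direction of conjugation. These are checked directly from $(U^*\Lambda U)_{xy}=\frac1N\sum_m\lambda_m e^{2\pi i m(x-y)/N}$.

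Note that the hypothesis $s>0$ is used only so that sharpness is defined. Zero rows of $B$ cause no trouble, because they contribute zero diagonal entries of $B^*B$.
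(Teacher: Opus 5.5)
Your argument is correct, but it takes a different route from the paper's. The paper uses only column $0$ of $\widehat M_\pi$. Since $\widehat M_\pi(0,0)=\frac{2s}{N}\neq0$ (this is where $s>0$ enters), the equality criterion forces $\widehat M_\pi(m,0)=0$ for $m\neq0$. These entries are exactly the nonconstant Fourier coefficients of the degree function, so Fourier inversion makes the degree constant.

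You instead use the one-nonzero-per-row condition for every row to get $B^*B=U^*M^2U$ diagonal. Hence $M^2$ is circulant, and you read regularity off its constant diagonal $(M^2)_{xx}=\deg(x)$. Your route proves more along the way: it is the $r=1$ case of Corollary~\ref{corollary:sharp-even-walks}, so you also get that common-neighbor counts depend only on $x-y$. It also never needs the nonvanishing of the $(0,0)$ coefficient. The paper's route is more economical, since one column suffices. It also exhibits the constant character directly as an eigenvector with eigenvalue $d$, which is what feeds into the normal form \eqref{equation:sharp-hermitian-data}.

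Two small presentational points. Your opening sentence promises to show that the all-ones vector is an eigenvector, but what you actually prove is that the diagonal of $M^2$ is constant; the eigenvector statement only follows afterwards from regularity. The remark that the row sums of $M^2$ are equal is true but plays no role in the conclusion.
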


\begin{proof}
The coefficient $\widehat M_\pi(0,0)=\frac{2s}{N}$ is nonzero, so the equality criterion forces $\widehat M_\pi(m,0)=0$ for $m\neq0$. If $d(x)$ is the degree of the vertex labeled $x$, then
$$
\widehat M_\pi(m,0)=\frac{1}{N}\sum_xd(x)e^{-\frac{2\pi imx}{N}},
$$
so every nonconstant Fourier coefficient of the degree function vanishes, and Fourier inversion shows that all degrees are equal.
\end{proof}

\begin{theorem}[A sharp spectral upper bound for additive edge energy]
\label{theorem:energy-spectral-cap}
For every labeling of a graph with adjacency matrix $M$ and $s>0$ edges,
$$
E_{\mathrm{edge}}(G,\pi)\leq\min\{(2s)^3,N^2\operatorname{tr}(M^4)\}.
$$
Equality in the spectral inequality $E_{\mathrm{edge}}(G,\pi)\leq N^2\operatorname{tr}(M^4)$ holds if and only if $\pi$ is sharp. Consequently,
$$
D_4(G,\pi)\geq\max\left\{\frac{N^2}{2s},\frac{(2s)^2}{\operatorname{tr}(M^4)}\right\},
$$
and $E_{\mathrm{edge}}^{\max}(G)=N^2\operatorname{tr}(M^4)$ if and only if $G$ admits a sharp labeling.
\end{theorem}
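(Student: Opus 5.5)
The plan is to reduce everything to the Fourier expression of the edge energy and then compare the $\ell^4$ norm of the entries of $B=\widehat M_\pi=UM_\pi U$ with the Schatten $4$-norm. The elementary bound $E_{\mathrm{edge}}(G,\pi)\leq(2s)^3$ is already contained in Proposition~\ref{proposition:indicator-uncertainty-general} applied to $A_\pi\subset{\mathbb Z}_N^2$ with $|A_\pi|=2s$. For the spectral term, Theorem~\ref{theorem:edge-energy-identity} gives
$$
E_{\mathrm{edge}}(G,\pi)=N^2\sum_{m,n}|B_{mn}|^4.
$$
Since $U$ is unitary, $B$ has the same singular values as $M_\pi$, namely $|\lambda_1|,\ldots,|\lambda_N|$. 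Hence $\sum_j\sigma_j(B)^4=\sum_j\lambda_j^4=\operatorname{tr}(M^4)$, and this quantity does not depend on the labeling. It therefore suffices to prove the matrix inequality
$$
\sum_{i,j}|B_{ij}|^4\leq\sum_j\sigma_j(B)^4=\operatorname{tr}\bigl((B^*B)^2\bigr),
$$
with equality if and only if $B$ is a weighted partial permutation matrix.

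To prove this inequality, set $P=B^*B$, which is positive semidefinite with diagonal entries $P_{jj}=\sum_i|B_{ij}|^2$. First, $\operatorname{tr}(P^2)=\sum_{j,k}|P_{jk}|^2\geq\sum_j P_{jj}^2$, with equality exactly when $P$ is diagonal, that is, when the columns of $B$ are orthogonal. Second, $P_{jj}^2=\bigl(\sum_i|B_{ij}|^2\bigr)^2\geq\sum_i|B_{ij}|^4$, with equality exactly when column $j$ has at most one nonzero entry. Chaining the two gives the inequality.

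For the equality case, equality forces every column to have at most one nonzero entry and forces distinct columns to be orthogonal. Two nonzero columns each supported on a single entry are orthogonal only if those entries lie in different rows, so each row also has at most one nonzero entry. Conversely, a weighted partial permutation matrix makes both steps equalities. Theorem~\ref{theorem:prior-equality-criterion} then identifies this equality with sharpness of $\pi$.

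The consequences follow directly. Inserting both upper bounds into $D_4(G,\pi)=N^2(2s)^2/E_{\mathrm{edge}}(G,\pi)$ gives the lower bound on $D_4$. For the last statement, every labeling satisfies $E_{\mathrm{edge}}\leq N^2\operatorname{tr}(M^4)$, and the right-hand side is labeling independent because relabeling conjugates $M$ by a permutation matrix. So the maximum over labelings equals this value exactly when some labeling attains equality, which by the above means exactly when some labeling is sharp.

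The main point requiring care is the equality analysis. In particular, the two-step chain must be shown to pin down both the row and the column conditions, and the argument should use the orthogonality of columns rather than only the column sparsity.
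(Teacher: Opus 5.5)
Your proof is correct and is essentially the paper's argument. The paper runs the chain $\sum_{i,j}|B_{ij}|^4\leq\sum_i r_i^2\leq\operatorname{tr}((BB^*)^2)$ with squared row norms $r_i$, while you run the same chain on columns via $B^*B$; the equality analysis, the appeal to Theorem~\ref{theorem:prior-equality-criterion}, the use of Proposition~\ref{proposition:indicator-uncertainty-general} for the $(2s)^3$ term, and the reduction through the edge-energy identity all coincide with the paper's.
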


\begin{proof}
For any complex matrix $B$ write $r_i=\sum_j|B_{ij}|^2$ for its squared row norms. Then
\begin{equation}
\sum_{i,j}|B_{ij}|^4\leq\sum_ir_i^2\leq\operatorname{tr}((BB^*)^2).
\label{equation:entrywise-fourth-spectral}
\end{equation}
The first inequality follows by expanding each squared row norm, and the second from $\operatorname{tr}((BB^*)^2)=\sum_{i,k}|(BB^*)_{ik}|^2$ together with $(BB^*)_{ii}=r_i$. Equality in the first inequality requires at most one nonzero entry in each row, equality in the second requires distinct rows to be orthogonal, and together these conditions say that the nonzero entries occupy distinct rows and columns, which is also sufficient for equality.

Apply this to $B=UM_\pi U$. Its singular values are those of $M$, so $\operatorname{tr}((BB^*)^2)=\operatorname{tr}(M^4)$, and the edge-energy identity proves the spectral bound, whose equality case is identified by Theorem~\ref{theorem:prior-equality-criterion}. Proposition~\ref{proposition:indicator-uncertainty-general} supplies the bound $(2s)^3$, and dividing by the two upper bounds produces the inequalities for $D_4$. Finally, there are only finitely many labelings, so the maximum is attained, and the equality criterion therefore also proves the last assertion.
\end{proof}

\begin{proposition}[Comparison of the two energy caps]
\label{proposition:cap-comparison}
If $G$ is $d$-regular with $d>0$, then
$$
N^2\operatorname{tr}(M^4)\leq(Nd)^3=(2s)^3,
$$
and equality holds if and only if every connected component is $K_{d,d}$. For irregular graphs either cap can be smaller.
\end{proposition}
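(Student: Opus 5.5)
The spectral comparison reduces to a row-sum estimate for $M^2$. Since $M$ is symmetric, $\operatorname{tr}(M^4)=\sum_{i,k}(M^2)_{ik}^2$. The entry $(M^2)_{ik}$ is the number of common neighbours of $i$ and $k$, so it satisfies $0\leq (M^2)_{ik}\leq d$. Each row of $M^2$ sums to $d^2$, because there are $d^2$ walks of length two from $i$. Hence
$$
\sum_k (M^2)_{ik}^2\leq d\sum_k (M^2)_{ik}=d^3,
$$
and summing over the $N$ rows gives $\operatorname{tr}(M^4)\leq Nd^3$. Multiplying by $N^2$ yields $N^2\operatorname{tr}(M^4)\leq N^3d^3=(Nd)^3=(2s)^3$, since $2s=Nd$.

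For the equality case, equality forces every nonzero entry of $M^2$ to equal $d$. The diagonal entry $(M^2)_{ii}=d$ is nonzero, so for every $k$ with $(M^2)_{ik}>0$, the vertex $k$ has the same neighbourhood as $i$. Conversely, if $K_{d,d}$ is a component, then for vertices $i,k$ on the same side $(M^2)_{ik}=d$, and all other entries vanish, so equality holds.

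The main step is showing that equality forces components to be $K_{d,d}$. Define $i\sim k$ when $N(i)=N(k)$; this is an equivalence relation. Fix a vertex $i$ and a neighbour $j$. Every neighbour $k$ of $j$ shares $j$ with $i$, so $(M^2)_{ik}>0$ and hence $N(k)=N(i)$. Thus $N(j)$ lies in the class of $i$, and this class has size exactly $d$: it contains $N(j)$, of size $d$, and any class member is adjacent to $j\in N(i)$, so the class lies inside $N(j)$. By symmetry, $N(i)$ is the class of $j$. Every vertex of the class of $i$ is adjacent to all of $N(i)$, which is the class of $j$. These two classes of size $d$ are therefore completely joined, and each vertex has degree $d$, so they form a component isomorphic to $K_{d,d}$. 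The class of $i$ is independent because $i\notin N(i)$.

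For irregular graphs I would exhibit two small examples with $N^2\operatorname{tr}(M^4)$ computed from the spectrum.
\begin{itemize}
\item The star $K_{1,n}$ has $\operatorname{tr}(M^4)=2n^2$, $N=n+1$ and $2s=2n$. Then $N^2\operatorname{tr}(M^4)=2n^2(n+1)^2$, which exceeds $8n^3$ for $n$ large. Here the elementary cap is smaller.
\item Take $K_{d,d}$ together with a single extra isolated edge (no pendant attached). Then $(2s)^3=(2d^2+2)^3$, while $N^2\operatorname{tr}(M^4)=(2d+2)^2(2d^3+2)$, which is of order $8d^5$ and so smaller for large $d$. Here the spectral cap is smaller.
\end{itemize}
I would confirm these inequalities numerically for specific values such as $n=10$ and $d=10$.
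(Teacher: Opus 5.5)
Your proof of the regular case is correct, but it takes a different route from the paper's. The paper argues spectrally: it uses $\sum_j\lambda_j^4\le d^2\sum_j\lambda_j^2=Nd^3$ from $|\lambda_j|\le d$. For the equality case it invokes the componentwise argument of Corollary~\ref{corollary:extremizer-dimensions-agree}: a simple eigenvalue $d$, trace zero forcing $-d$, bipartiteness, rank two, and a rank-one zero-one block. You work instead with the common-neighbour counts $(M^2)_{ik}\in[0,d]$ and their row sums $d^2$. Equality then says that two vertices with a common neighbour have the same neighbourhood. Your class argument correctly yields a $K_{d,d}$ component; there, $i\notin N(i)$ shows that the class of $i$ differs from the class of $j$, hence the two are disjoint. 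Your version is self-contained and avoids the Perron--Frobenius and bipartite spectral facts, while the paper's is shorter given that corollary. Your star example is also fine.

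The gap is in your second irregular example, which is meant to show that the spectral cap can be the smaller one. The value $\operatorname{tr}(M^4)=2d^3+2$ is wrong. The nonzero eigenvalues of $K_{d,d}$ are $\pm d$, which contribute $2d^4$; equivalently, each of its $2d$ vertices has $(M^4)_{ii}=d\cdot d^2$. So for $K_{d,d}\sqcup K_2$ one has $\operatorname{tr}(M^4)=2d^4+2$, and
$$
N^2\operatorname{tr}(M^4)-(2s)^3=8(d+1)^2(d^4+1)-8(d^2+1)^3=16d(d-1)^2(d^2+d+1)>0
$$
for $d\geq2$. Both caps are of order $8d^6$, and the elementary cap is the smaller one, exactly as for the star. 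At $d=10$ the two caps are $9{,}680{,}968$ and $8{,}242{,}408$, so the numerical check you propose would have exposed this. The underlying reason is that $K_{d,d}$ sits exactly at equality. Adjoining a sparse component increases $N$ by a relatively larger amount than $s$, which inflates $N^2\operatorname{tr}(M^4)$ more than $(2s)^3$.

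Hence the claim that the spectral cap can be strictly smaller for an irregular graph is left unproved. You need an irregular graph close to a regular graph that satisfies the strict inequality. The paper uses $K_4$ with one edge deleted. There $2s=10$ and the rows of $M^2$ have squared norms $15,15,10,10$, so $\operatorname{tr}(M^4)=50$ and $N^2\operatorname{tr}(M^4)=800<1000=(2s)^3$.
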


\begin{proof}
Since $|\lambda_j|\leq d$ and $\sum_j\lambda_j^2=Nd$,
$$
\operatorname{tr}(M^4)=\sum_j\lambda_j^4\leq d^2\sum_j\lambda_j^2=Nd^3.
$$
Equality forces every nonzero eigenvalue to have modulus $d$, and the componentwise argument in Corollary~\ref{corollary:extremizer-dimensions-agree}, which uses only regularity and the spectrum, then shows that every component is $K_{d,d}$. Conversely, these components have only $0$, $d$ and $-d$ as eigenvalues. For the star $K_{1,n}$ the ratio of the spectral cap to the elementary cap is
$$
\frac{(n+1)^22n^2}{(2n)^3}=\frac{(n+1)^2}{4n},
$$
which tends to infinity, whereas $K_4$ with one edge deleted has $2s=10$ and $\operatorname{tr}(M^4)=50$, so that its spectral cap is $800<1000=(2s)^3$.
\end{proof}

\begin{proposition}[Higher additive moments]
\label{proposition:higher-additive-moments}
For an integer $k\geq2$, define
$$
E_k(A)=|\{(a_1,\ldots,a_k,b_1,\ldots,b_k)\in A^{2k}:a_1+\cdots+a_k=b_1+\cdots+b_k\}|.
$$
For every labeling of a graph with at least one edge and adjacency matrix $M$,
\begin{equation}
E_k(A_\pi)=N^{2k-2}\sum_{m,n}|\widehat M_\pi(m,n)|^{2k}\leq N^{2k-2}\operatorname{tr}(M^{2k}),
\label{equation:higher-additive-moments}
\end{equation}
and equality holds if and only if the labeling is sharp. In particular, a labeling attains one of these higher-moment bounds if and only if it attains all of them.
\end{proposition}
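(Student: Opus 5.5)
The plan has three steps: derive the Fourier identity for $E_k$, bound the $\ell^{2k}$ norm of the entries of $B=UM_\pi U$ by a Schatten norm, and identify the equality case with the criterion in Theorem~\ref{theorem:prior-equality-criterion}.

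\emph{Fourier identity.} I would follow Proposition~\ref{proposition:general-energy-identity}. Group the $2k$-tuples by the common value $z$ of $a_1+\cdots+a_k$ and $b_1+\cdots+b_k$. This gives
$$
E_k(A_\pi)=\|f^{*k}\|_2^2,\qquad f^{*k}=f*\cdots*f\ (k\text{ factors}).
$$
Iterating \eqref{equation:convolution-transform} on $H={\mathbb Z}_N^2$, where $M=N^2$, yields $\widehat{f^{*k}}=N^{k-1}\widehat f^{\,k}$. Plancherel then gives
$$
E_k(A_\pi)=N^{2k-2}\sum_{m,n}|\widehat f_\pi(m,n)|^{2k},
$$
and by \eqref{equation:matrix-fourier-transform} these are the entries of $B$.

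\emph{Inequality.} Let $r_i=\sum_j|B_{ij}|^2$ be the squared row norms. I would use the chain
$$
\sum_{i,j}|B_{ij}|^{2k}\leq\sum_i r_i^k\leq\operatorname{tr}((BB^*)^k)=\operatorname{tr}(M^{2k}).
$$
The first step is superadditivity of $t\mapsto t^k$ on nonnegative numbers, applied row by row. The second compares the diagonal entries $(BB^*)_{ii}=r_i$ of the positive semidefinite matrix $P=BB^*$ with its eigenvalues $\mu_\ell$. Schur's theorem says the diagonal is majorized by the eigenvalues, and since $t\mapsto t^k$ is convex this gives $\sum_i P_{ii}^k\leq\sum_\ell\mu_\ell^k$. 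A more elementary route is $P_{ii}^k=(e_i^*Pe_i)^k\leq e_i^*P^ke_i$, by Jensen applied to the spectral measure of $P$ at $e_i$; summing over $i$ gives $\operatorname{tr}(P^k)$. The final equality holds because $B$ has the singular values of $M$ and $M$ is symmetric.

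\emph{Equality.} Equality in the first step with $k\geq2$ forces at most one nonzero entry per row. Equality in the Jensen step with strictly convex $t^k$ forces each $e_i$ with $r_i>0$ to lie in an eigenspace of $P$. I expect this to be the only delicate point. I would argue that $P^k$ and $P$ then have the same diagonal, which for $k\geq2$ forces $P$ to be diagonal: if $e_i$ is an eigenvector of $P$ then the off-diagonal entries in row $i$ of $P$ vanish, and rows with $r_i=0$ are zero anyway. A diagonal $BB^*$ means the rows are orthogonal. Combined with at most one nonzero entry per row, orthogonality forces distinct rows to use distinct columns. So $B$ is a weighted partial permutation matrix, and Theorem~\ref{theorem:prior-equality-criterion} shows that $\pi$ is sharp.

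Conversely, if $B$ is a weighted partial permutation matrix, then $BB^*$ is diagonal with entries $|B_{ij}|^2$, and every inequality in the chain is an equality. Since the condition "sharp" does not depend on $k$, the last assertion follows: a labeling attains one of the bounds \eqref{equation:higher-additive-moments} exactly when it is sharp, hence exactly when it attains all of them. The case $k=2$ recovers Theorem~\ref{theorem:energy-spectral-cap}.
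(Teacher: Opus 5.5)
Your proof is correct and follows the paper's argument essentially step for step: the same convolution and Plancherel identity, the same chain $\sum_{i,j}|B_{ij}|^{2k}\leq\sum_ir_i^k\leq\operatorname{tr}((BB^*)^k)$ via expanding row norms and Jensen on the spectral decomposition of $BB^*$, and the same equality analysis showing that $B$ must be a weighted partial permutation matrix. One wording slip: equality in the Jensen step gives $(P^k)_{ii}=(P_{ii})^k$, not that $P^k$ and $P$ have the same diagonal, but the eigenvector argument you then give is exactly what is needed.
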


\begin{proof}
The convolution identity and Plancherel give the first equality, since $E_k(A_\pi)$ is the squared $\ell^2$ norm of the $k$-fold convolution of $1_{A_\pi}$ with itself. For a complex matrix $B$, put $X=BB^*$ and $r_i=X_{ii}$. Then
$$
\sum_{i,j}|B_{ij}|^{2k}\leq\sum_ir_i^k\leq\operatorname{tr}(X^k),
$$
where the first inequality follows by expanding the integer power of each squared row norm, and the second by diagonalizing $X=V\operatorname{diag}(\mu_a)V^*$ and applying Jensen's inequality to $r_i=\sum_a|V_{ia}|^2\mu_a$, using $\sum_a|V_{ia}|^2=1$. Equality in the first inequality requires each row of $B$ to have at most one nonzero entry. Since $t\mapsto t^k$ is strictly convex on $[0,\infty)$, equality in the second says that each standard basis vector belongs to a single eigenspace of $X$, so that $X$ is diagonal. Together, the two equality conditions say exactly that $B$ is a weighted partial permutation matrix. Apply this with $B=\widehat M_\pi$, whose singular values are the numbers $|\lambda_j(M)|$.
\end{proof}

\begin{theorem}[Exact additive energy of a sharp labeling]
\label{theorem:extremizer-exact-energy}
Let $G$ be a simple graph with $N$ vertices and $s>0$ edges, let $M$ be its adjacency matrix under a sharp labeling $\pi$, and let $\lambda_1,\ldots,\lambda_N$ be the adjacency eigenvalues. Then
\begin{equation}
E_{\mathrm{edge}}(G,\pi)=N^2\operatorname{tr}(M^4)=N^2\sum_{j=1}^N|\lambda_j|^4,
\label{equation:extremizer-edge-energy}
\end{equation}
\begin{equation}
D_4(G,\pi)=\frac{(2s)^2}{\operatorname{tr}(M^4)},
\label{equation:extremizer-collision-dimension}
\end{equation}
and
\begin{equation}
|\operatorname{supp}(\widehat M)|=\operatorname{rank}(M).
\label{equation:extremizer-support-rank}
\end{equation}
Moreover, the nonzero values $|\widehat M(m,n)|$, counted with multiplicity, are exactly the nonzero values $|\lambda_j|$.
\end{theorem}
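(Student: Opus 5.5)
The plan is to read everything off the single structural fact supplied by the equality criterion. By Theorem~\ref{theorem:prior-equality-criterion}, since $\pi$ is sharp, $B=\widehat M=UMU$ is a weighted partial permutation matrix. The first step is to identify its singular values. For such a $B$, the product $BB^*$ is diagonal, and its diagonal entry in row $i$ is $|B_{i,j(i)}|^2$ when row $i$ contains a nonzero entry in column $j(i)$, and $0$ otherwise. So the nonzero singular values of $B$, counted with multiplicity, are exactly the moduli of its nonzero entries. Because $U$ is unitary, $B$ has the same singular values as $M$. Since $M$ is real symmetric, these are the numbers $|\lambda_j|$. This already gives the final assertion of the theorem: the nonzero values $|\widehat M(m,n)|$, counted with multiplicity, coincide with the nonzero $|\lambda_j|$.

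The remaining identities then follow by counting and summing. The number of nonzero entries of $B$ equals the number of nonzero singular values. That number is $\operatorname{rank}(B)=\operatorname{rank}(M)$, which proves \eqref{equation:extremizer-support-rank}.

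For \eqref{equation:extremizer-edge-energy}, we use the edge-energy identity (Theorem~\ref{theorem:edge-energy-identity}):
\[
E_{\mathrm{edge}}(G,\pi)=N^2\sum_{m,n}|\widehat M(m,n)|^4 .
\]
By the multiset identification, the sum on the right equals $\sum_j|\lambda_j|^4$. Since $M$ is symmetric, $\sum_j|\lambda_j|^4=\operatorname{tr}(M^4)$. Alternatively, this is just the equality case of Theorem~\ref{theorem:energy-spectral-cap}, which holds precisely for sharp labelings.

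Finally, \eqref{equation:extremizer-collision-dimension} comes from substituting this value of the energy into \eqref{equation:graph-collision-dimension}. This gives
\[
D_4(G,\pi)=\frac{N^2(2s)^2}{N^2\operatorname{tr}(M^4)}=\frac{(2s)^2}{\operatorname{tr}(M^4)}.
\]

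There is no serious obstacle here. The only point that needs care is the multiplicity bookkeeping: the nonzero entries must correspond bijectively to the nonzero singular values, and zero rows and columns must contribute only zero singular values. The observation that $BB^*$ is diagonal handles this cleanly.
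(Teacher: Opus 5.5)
Your proposal is correct and follows essentially the same route as the paper. The paper identifies the singular values by permuting the columns of $\widehat M$ into a diagonal matrix, while you observe that $BB^*$ is diagonal; these are equivalent bookkeeping devices, and the remaining steps (edge-energy identity, division for $D_4$, counting nonzero entries against $\operatorname{rank}(M)$) match.
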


\begin{proof}
The nonzero entries of $\widehat M$ occupy distinct rows and columns, so a column permutation makes $\widehat M$ diagonal, and their moduli are therefore the nonzero singular values of $\widehat M$, which equal those of $M$, namely the nonzero $|\lambda_j|$. Summing fourth powers and using the edge-energy identity proves the first formula, and division yields the second. The number of these entries equals $\operatorname{rank}(\widehat M)=\operatorname{rank}(M)$, which proves the last assertion.
\end{proof}

For a sharp labeling, the equality between collision dimension and squared Fourier ratio can now be read from the adjacency spectrum.

\begin{corollary}
\label{corollary:extremizer-dimensions-agree}
Under the hypotheses of Theorem~\ref{theorem:extremizer-exact-energy},
$$
D_4(G,\pi)=\operatorname{FR}(G,\pi)^2
$$
if and only if all nonzero adjacency eigenvalues have the same absolute value. By Theorem~\ref{theorem:prior-extremizer-regularity}, $G$ is regular, and if its common degree is $d$, then the equality above holds if and only if every connected component of $G$ is isomorphic to the balanced complete bipartite graph $K_{d,d}$.
\end{corollary}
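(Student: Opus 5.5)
By Theorem~\ref{theorem:extremizer-exact-energy}, for a sharp labeling the nonzero entries of $\widehat M_\pi$ are, in modulus and with multiplicity, exactly the nonzero values $|\lambda_j|$. Theorem~\ref{theorem:collision-hierarchy-general} says that $D_4=\operatorname{FR}^2$ holds precisely when all nonzero Fourier coefficients have equal modulus. Combining the two gives the first equivalence: $D_4(G,\pi)=\operatorname{FR}(G,\pi)^2$ if and only if all nonzero $|\lambda_j|$ coincide. As a check, this also follows from $\operatorname{FR}^2=\mathcal E(G)^2/(2s)$ and $D_4=(2s)^2/\operatorname{tr}(M^4)$, because Cauchy--Schwarz gives $(\sum_j\lambda_j^2)^3\le(\sum_j|\lambda_j|)^2\sum_j\lambda_j^4$, with equality exactly in the equal-modulus case.

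For the graph-theoretic part, Theorem~\ref{theorem:prior-extremizer-regularity} makes $G$ $d$-regular with $d>0$. Each connected component $C$ is then $d$-regular, so $d$ is an eigenvalue of $C$, and the spectrum of $G$ is the union of the component spectra. The common nonzero modulus must therefore be $d$, and every eigenvalue of every component lies in $\{0,d,-d\}$. I will then show that a connected $d$-regular graph with spectrum in $\{0,\pm d\}$ is $K_{d,d}$.

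This last step is the main point. By Perron--Frobenius, connectivity makes $d$ a simple eigenvalue. If $-d$ is not an eigenvalue, then $\operatorname{tr}(A)=0$ forces the spectrum to be $d$ once plus zeros. Hence $A=\frac dn J$ on $n$ vertices, which is impossible: the zero diagonal forces $A=0$, contradicting $d>0$. So $-d$ is an eigenvalue, again simple by connectivity, which means the component is bipartite. The spectrum is then $\{d,-d,0,\dots,0\}$, so $A^2$ has rank $2$ and, on each side of the bipartition, $A^2$ restricts to $d\cdot\frac{d}{n_i}J$ for each side of size $n_i$. Its diagonal entries equal $d$, which gives $n_i=d$, so each side has $d$ vertices. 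A $d$-regular bipartite graph with $d$ vertices on each side is $K_{d,d}$.

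For the converse, each $K_{d,d}$ has eigenvalues $0,\pm d$, so all nonzero eigenvalues have modulus $d$, and the first equivalence applies.
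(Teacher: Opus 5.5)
Your proof is correct, and most of it follows the paper. The first equivalence is the paper's argument: the paper applies H\"older directly to the nonzero moduli $|\widehat M(m,n)|$, which is exactly the equality case of the collision hierarchy that you cite. In the second part you use the same skeleton: $d$ is simple, $-d$ must occur, so the component is bipartite with spectrum $\{d,-d,0,\dots,0\}$.

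The routes differ only at the end. The paper writes $A$ in bipartite block form and deduces $\operatorname{rank}(B)=1$ from $\operatorname{rank}(A)=2$. It then uses that a rank-one $0$-$1$ matrix with no zero rows or columns is all ones, and only afterwards reads off the class sizes from regularity. You instead identify the $d^2$-eigenspace of $A^2$ as the span of the two class indicators, so that $A^2=\frac{d^2}{n_1}J_{n_1}\oplus\frac{d^2}{n_2}J_{n_2}$. Comparing its diagonal with the degree gives $n_1=n_2=d$, and regularity then forces completeness. Your version gets the class sizes from a closed-walk count and avoids the rank-one $0$-$1$ combinatorics. The paper's version obtains completeness directly and never needs the explicit eigenvectors $\mathbf 1$ and $\mathbf 1_X-\mathbf 1_Y$.

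Two minor remarks. First, the step ruling out the absence of $-d$ is shorter than you make it. If $-d$ is missing, the spectrum is $d$ once plus zeros by simplicity and the hypothesis, and its trace is then $d\neq0$, an immediate contradiction. So the detour through $A=\frac dnJ$ is unnecessary, and it is not $\operatorname{tr}(A)=0$ that forces that spectrum. Second, the inequality in your check is H\"older's inequality, or two successive applications of Cauchy--Schwarz, rather than a single Cauchy--Schwarz.
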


\begin{proof}
Let $a_1,\ldots,a_r$ be the nonzero values $|\widehat M(m,n)|$. The inequality
$$
\left(\sum_ja_j^2\right)^3\leq\left(\sum_ja_j\right)^2\left(\sum_ja_j^4\right)
$$
is H\"older's inequality applied to $a_j^2=a_j^{\frac{2}{3}}a_j^{\frac{4}{3}}$, and equality holds precisely when all the positive $a_j$ are equal. By Theorem~\ref{theorem:extremizer-exact-energy} these numbers are the nonzero absolute adjacency eigenvalues, which proves the first assertion.

Now assume that $G$ is $d$-regular, and let $C$ be one of its connected components, which is connected and $d$-regular, so that its eigenvalue $d$ is simple. A connected $d$-regular graph has eigenvalue $-d$ if and only if it is bipartite, in which case $-d$ is also simple, see for example \cite{BrouwerHaemers2012}. To see the connection, bipartiteness supplies the eigenvector that is $1$ on one vertex class and $-1$ on the other, while the converse follows from the equality case of the triangle inequality applied to an eigenvector for $-d$.

All nonzero eigenvalues of $C$ have absolute value $d$. Since the trace of its adjacency matrix is zero, the positive eigenvalue $d$ cannot be the only nonzero eigenvalue, so $-d$ occurs, and the preceding spectral fact then shows that $C$ is bipartite and that its only nonzero eigenvalues are the simple pair $d,-d$. Thus its adjacency matrix has rank two. Writing that adjacency matrix in bipartite block form
$$
\begin{pmatrix}0&B\\B^T&0\end{pmatrix},
$$
the rank of the block matrix is $2\operatorname{rank}(B)$, because its image is the direct sum of the images of $B$ and $B^T$, and since the adjacency matrix of $C$ has rank two we obtain $\operatorname{rank}(B)=1$. Because $C$ is connected, $B$ has no zero row and no zero column. Every row of a rank-one matrix is a scalar multiple of any fixed nonzero row, and since the rows of $B$ are nonzero zero-one vectors, that scalar must be one, so that all rows are equal, and the absence of zero columns then forces every entry to be one. Thus $C$ is complete bipartite, and since it is $d$-regular, both vertex classes have size $d$, so that $C\cong K_{d,d}$. Conversely, a disjoint union of copies of $K_{d,d}$ has only the nonzero eigenvalues $d$ and $-d$, so the first part of the corollary yields equality.
\end{proof}

\subsection{Symmetries of the labeling search}

\begin{proposition}[Affine and graph-automorphism invariance]
\label{proposition:affine-invariance}
Let $\phi(x)=ax+b$ with $a\in{\mathbb Z}_N^\times$ and $b\in{\mathbb Z}_N$. Then
$$
\widehat f_{\pi\circ\phi}(m,n)=e^{\frac{2\pi i(m+n)a^{-1}b}{N}}\widehat f_\pi(a^{-1}m,a^{-1}n),
$$
so $\operatorname{FR}$, $D_4$ and the additive edge energy are unchanged, and the same is true of the best Fourier approximation errors and of the quantity $\kappa_\eta$ defined in Section~\ref{section:approximation}. If $\tau$ is a graph automorphism, meaning a permutation of $V$ that preserves adjacency, then $f_{\tau\circ\pi}=f_\pi$.
\end{proposition}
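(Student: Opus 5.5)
The plan is to compute $\widehat f_{\pi\circ\phi}$ by a direct change of variables in the defining sum \eqref{equation:two-dimensional-transform}. We have $f_{\pi\circ\phi}(x,y)=f_\pi(\phi(x),\phi(y))$. Substitute $u=ax+b$ and $v=ay+b$; since $a$ is a unit, this is a bijection of ${\mathbb Z}_N^2$. Then $x=a^{-1}(u-b)$, so
$$
mx+ny=a^{-1}mu+a^{-1}nv-(m+n)a^{-1}b .
$$
Pulling the constant phase out of the sum gives
$$
\widehat f_{\pi\circ\phi}(m,n)=e^{\frac{2\pi i(m+n)a^{-1}b}{N}}\,\widehat f_\pi(a^{-1}m,a^{-1}n),
$$
which is the stated formula. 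The only care needed is in the sign of the exponent and in reading $a^{-1}m$ in ${\mathbb Z}_N$, which is well defined because the character depends only on residues.

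Invariance of the numerical quantities then follows. The map $(m,n)\mapsto(a^{-1}m,a^{-1}n)$ is a bijection of ${\mathbb Z}_N^2$, and the phase factor has modulus one. So the multiset of moduli $|\widehat f(m,n)|$ is unchanged, and therefore every $\ell^p$ norm of $\widehat f$ is unchanged. This gives invariance of $\operatorname{FR}$ and $D_4$ directly. It also gives invariance of the additive edge energy, through Theorem~\ref{theorem:edge-energy-identity}, since $s$ does not change. Alternatively, $A_{\pi\circ\phi}$ is the image of $A_\pi$ under the inverse affine map $(u,v)\mapsto(a^{-1}(u-b),a^{-1}(v-b))$. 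This is an affine automorphism of ${\mathbb Z}_N^2$, and the count of additive quadruples is preserved because $b_1+b_2=b_3+b_4$ is invariant under $z\mapsto a^{-1}z+c$ applied to all four entries.

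The best $k$-term Fourier approximation errors and $\kappa_\eta$ are defined later in the paper. I expect both to depend only on the multiset of moduli of the Fourier coefficients, with the best $k$-term error being the $\ell^2$ norm of all but the $k$ largest moduli. Invariance then follows from the same observation. This step is the only real obstacle: I have to check against the later definitions that no quantity depends on the phases or on the positions of the frequencies in a way that the scaling $m\mapsto a^{-1}m$ would break. If some quantity refers to a structured frequency set, I will instead check that the set is carried to a set of the same type under the dilation.

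Finally, let $\tau$ be an automorphism. Then $\{\tau\pi(x),\tau\pi(y)\}\in E$ if and only if $\{\pi(x),\pi(y)\}\in E$, because $\tau$ preserves adjacency. Hence $A_{\tau\circ\pi}=A_\pi$, and so $f_{\tau\circ\pi}=f_\pi$.
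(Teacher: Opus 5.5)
Your proposal is correct and follows the same route as the paper. The substitution $u=ax+b$, $v=ay+b$ gives the displayed formula, the dilation $(m,n)\mapsto(a^{-1}m,a^{-1}n)$ with a unimodular phase permutes the coefficient moduli, and automorphisms preserve adjacency directly. The step you flagged as the obstacle is settled by Proposition~\ref{proposition:exact-best-k-general}: it writes $\sigma_k(f)_2^2$ as the sum of the squared moduli beyond the $k$ largest, so both $\sigma_k$ and $\kappa_\eta$ depend only on the multiset of moduli.
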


\begin{proof}
Substituting $u=ax+b$ and $v=ay+b$ in the Fourier sum produces the displayed formula, which permutes the coefficient moduli, and a graph automorphism preserves adjacency itself.
\end{proof}

Choose a reference ordering of the vertices, so that labelings are identified with permutations in $S_N$, let $\operatorname{Aut}(G)$ be the group of graph automorphisms, and let $\operatorname{Aff}({\mathbb Z}_N)$ be the group of maps $x\mapsto ax+b$ with $a$ a unit. Labelings related by an automorphism on the left and an affine map on the right have the same quantities above, so it suffices to choose one representative from each double coset in
$$
\operatorname{Aut}(G)\backslash S_N/\operatorname{Aff}({\mathbb Z}_N).
$$
The right action on labelings is free and by itself reduces the search to $\frac{N!}{N\varphi(N)}$ representatives, where $\varphi(N)$ is Euler's totient function, the number of units in ${\mathbb Z}_N$. The further reduction from graph automorphisms depends on the intersections of the two stabilizers, so the sizes of the double cosets can vary.

\section{Structural classification of sharp graphs}
\label{section:classification}

A sharp adjacency matrix sends each Fourier character either to zero or to a scalar multiple of one character, and since the adjacency matrix is real and symmetric, these characters must be paired in a particular way. We begin with the resulting normal form and its consequence for even walks from \cite{GuptaIosevichEtAl2026}.

Recall that a matrix is circulant if its $(x,y)$ entry depends only on $x-y$ in ${\mathbb Z}_N$, or equivalently if it is diagonal in the Fourier basis. Put $\zeta=e^{\frac{2\pi i}{N}}$,
$$
\chi_k(x)=N^{-\frac{1}{2}}\zeta^{kx}
\text{ and }
B=UMU^*.
$$
The matrices $B$ and $\widehat M=UMU$ differ by a column permutation, so the equality criterion applies equally to $B$, and $B$ is Hermitian, which makes the pairing of the Fourier characters explicit.

\begin{theorem}[Fourier involution normal form, \cite{GuptaIosevichEtAl2026}]
\label{theorem:sharp-involution}
A simple graph with at least one edge has a sharp labeling on ${\mathbb Z}_N$ if and only if, in that labeling, there are a set $\Omega\subseteq{\mathbb Z}_N$, an involution $\tau:\Omega\to\Omega$, meaning that $\tau^2(k)=k$, and nonzero numbers $c_k$ for $k\in\Omega$, such that
\begin{equation}
M\chi_k=c_k\chi_{\tau(k)}\text{ for }k\in\Omega
\text{ and }
M\chi_k=0\text{ for }k\notin\Omega.
\label{equation:sharp-character-action}
\end{equation}
For a sharp labeling these data necessarily satisfy
\begin{equation}
0\in\Omega,\tau(0)=0,c_0=d>0,c_{\tau(k)}=\overline{c_k},
\label{equation:sharp-hermitian-data}
\end{equation}
\begin{equation}
-\Omega=\Omega,\tau(-k)=-\tau(k),c_{-k}=\overline{c_k},
\label{equation:sharp-real-data}
\end{equation}
where $G$ is $d$-regular, and moreover
\begin{equation}
M(x,y)=\frac{1}{N}\sum_{k\in\Omega}c_k\zeta^{\tau(k)x-ky}.
\label{equation:sharp-inverse-normal-form}
\end{equation}
Each fixed point of $\tau$ contributes the real eigenvalue $c_k$, and each two-element orbit $\{k,\tau(k)\}$ contributes the eigenvalues $|c_k|$ and $-|c_k|$.
\end{theorem}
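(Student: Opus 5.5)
The plan is to work throughout with the Hermitian matrix $B=UMU^*$ and read off its columns as the action of $M$ on characters. First I would compute the entries directly:
$$
B_{m,k}=\frac{1}{N}\sum_{x,y}\zeta^{-mx}M(x,y)\zeta^{ky}=\langle M\chi_k,\chi_m\rangle,
$$
so that column $k$ of $B$ lists the coordinates of $M\chi_k$ in the orthonormal basis $\{\chi_m\}$. Since $\overline{U_{k,y}}=U_{-k,y}$, the matrix $B$ is obtained from $\widehat M=UMU$ by the column permutation $k\mapsto-k$. Hence Theorem~\ref{theorem:prior-equality-criterion} says that the labeling is sharp if and only if $B$ is a weighted partial permutation matrix.

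For the forward direction, let $\Omega$ be the set of nonzero columns of $B$. For $k\in\Omega$ the unique nonzero entry lies in some row $\sigma(k)$, so $M\chi_k=c_k\chi_{\sigma(k)}$ with $c_k\neq0$. For $k\notin\Omega$ we get $M\chi_k=0$. The row condition makes $\sigma$ injective. Hermitian symmetry gives $B_{k,\sigma(k)}=\overline{B_{\sigma(k),k}}=\overline{c_k}\neq0$. This means column $\sigma(k)$ is nonzero, so $\sigma(k)\in\Omega$, and its nonzero entry sits in row $k$. Thus $\sigma$ maps $\Omega$ to itself, $\tau:=\sigma$ satisfies $\tau^2=\mathrm{id}$, and $c_{\tau(k)}=\overline{c_k}$.

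For the converse, the displayed data force each column $k\in\Omega$ of $B$ to have a single nonzero entry, in row $\tau(k)$, and all other columns to vanish. Since $\tau$ is a bijection of $\Omega$, the rows are also distinct, so $B$ is a weighted partial permutation matrix and the labeling is sharp.

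Next come the necessary constraints \eqref{equation:sharp-hermitian-data} and \eqref{equation:sharp-real-data}.
\begin{itemize}
\item By Theorem~\ref{theorem:prior-extremizer-regularity} the graph is $d$-regular with $d>0$, because $s>0$. Since $\chi_0$ is constant, $M\chi_0=d\chi_0$, which gives $0\in\Omega$, $\tau(0)=0$ and $c_0=d$.
\item Since $M$ is real and $\overline{\chi_k}=\chi_{-k}$, conjugating $M\chi_k=c_k\chi_{\tau(k)}$ gives $M\chi_{-k}=\overline{c_k}\chi_{-\tau(k)}$. Comparing with the normal form at $-k$, and noting that $M\chi_{-k}=0$ exactly when $M\chi_k=0$, yields $-\Omega=\Omega$, $\tau(-k)=-\tau(k)$ and $c_{-k}=\overline{c_k}$.
\end{itemize}

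The inversion formula \eqref{equation:sharp-inverse-normal-form} follows by expanding $M=\sum_k (M\chi_k)\chi_k^*=\sum_{k\in\Omega}c_k\chi_{\tau(k)}\chi_k^*$ and reading off the $(x,y)$ entry $\frac1N c_k\zeta^{\tau(k)x-ky}$.

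For the spectrum, the spaces $\mathrm{span}\{\chi_k,\chi_{\tau(k)}\}$ over the orbits of $\tau$, together with the span of the characters outside $\Omega$, give an orthogonal $M$-invariant decomposition. On a fixed point, $\chi_k$ is an eigenvector with eigenvalue $c_k$, which is real because $c_k=c_{\tau(k)}=\overline{c_k}$. On a two-element orbit, $M$ acts in the basis $(\chi_k,\chi_{\tau(k)})$ by
$$
\begin{pmatrix}0&\overline{c_k}\\c_k&0\end{pmatrix},
$$
which has eigenvalues $\pm|c_k|$. The characters outside $\Omega$ contribute the eigenvalue $0$.

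The only real obstacle is bookkeeping: keeping the sign conventions straight between $UMU$ and $UMU^*$, and checking that the Hermitian pairing forces $\tau(\Omega)\subseteq\Omega$ rather than merely giving an injection into $\mathbb Z_N$. I would handle this by doing the entry computation of $B$ once at the start and deriving every later statement from it.
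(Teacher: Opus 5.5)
Your proposal is correct and follows essentially the same route as the paper: you read the columns of the Hermitian matrix $B=UMU^*$ as the character action, then use Hermitian symmetry for the involution, complex conjugation (since $M$ is real) for the symmetry constraints under $k\mapsto-k$, Fourier inversion for the kernel formula, and the $2\times2$ blocks for the spectrum. The differences are cosmetic. You obtain $M\chi_0=d\chi_0$ from the earlier regularity theorem, whereas the paper reads it off the $(0,0)$ entry of $B$ and so reproves regularity in passing, and your explicit check that $\tau(\Omega)\subseteq\Omega$ fills in a step the paper leaves implicit.
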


\begin{proof}
For a sharp labeling, let $\Omega$ be the set of nonzero columns of $B$, whose indices we call the active frequencies. Each active column $k$ has its unique nonzero entry, say $c_k$, in some row $\tau(k)$, and since the entries of column $k$ of $B$ are the coefficients of $M\chi_k$ in the character basis, this is the character action \eqref{equation:sharp-character-action}. Hermitian symmetry forces $\tau^2(k)=k$ and $c_{\tau(k)}=\overline{c_k}$, and conjugating the character action produces \eqref{equation:sharp-real-data}, because $M$ is real. The $(0,0)$ entry of $B$ is the average degree, which is positive, so all other entries of its zeroth row and column vanish, and consequently $M\chi_0=d\chi_0$, which proves regularity and the assertions at zero. Fourier inversion then yields \eqref{equation:sharp-inverse-normal-form}. A two-element orbit contributes the Hermitian block
$$
\begin{pmatrix}0&\overline{c_k}\\c_k&0\end{pmatrix},
$$
whose eigenvalues are $\pm|c_k|$. Conversely, \eqref{equation:sharp-character-action} says precisely that $B$ is a weighted partial permutation matrix, so the labeling is sharp.
\end{proof}

\begin{remark}
To construct a simple graph from the normal form, one chooses data satisfying \eqref{equation:sharp-hermitian-data} and \eqref{equation:sharp-real-data} and imposes $M(x,x)=0$ and $M(x,y)\in\{0,1\}$ for $x\ne y$. These conditions require the kernel in \eqref{equation:sharp-inverse-normal-form} to be the adjacency matrix of a simple graph, and together with the symmetry conditions they parametrize all sharp labeled simple graphs.
\end{remark}

\begin{corollary}[Even walks and a spectral obstruction, cf.\ \cite{GuptaIosevichEtAl2026}]
\label{corollary:sharp-even-walks}
Under a sharp labeling, $M^{2r}$ is circulant for every integer $r\geq1$. In particular the number of closed walks of each even length is independent of the vertex, and the number of common neighbors of $x$ and $y$ depends only on $x-y$. If the adjacency spectrum has no pair of nonzero eigenvalues $\lambda,-\lambda$, then every sharp labeling has a circulant adjacency matrix.
\end{corollary}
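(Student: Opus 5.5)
We work directly from the character action in Theorem~\ref{theorem:sharp-involution}. Fix a sharp labeling with data $\Omega,\tau,c_k$. Applying $M$ twice to $\chi_k$ with $k\in\Omega$ gives
$$
M^2\chi_k=c_kM\chi_{\tau(k)}=c_kc_{\tau(k)}\chi_{\tau^2(k)}=|c_k|^2\chi_k,
$$
using $\tau^2=\mathrm{id}$ and $c_{\tau(k)}=\overline{c_k}$ from \eqref{equation:sharp-hermitian-data}. For $k\notin\Omega$ we simply have $M^2\chi_k=0$. Hence every character is an eigenvector of $M^2$, so $UM^2U^*$ is diagonal and $M^2$ is circulant. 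Powers and products of circulant matrices remain circulant, so $M^{2r}=(M^2)^r$ is circulant for every $r\geq1$.

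The two combinatorial consequences then follow by reading entries. Because $M^{2r}$ is circulant, its diagonal entry $(M^{2r})(x,x)$, which counts the closed walks of length $2r$ at the vertex labeled $x$, depends only on $x-x=0$. It is therefore the same at every vertex. Likewise $(M^2)(x,y)$ counts the common neighbours of $x$ and $y$, and this entry depends only on $x-y$.

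For the spectral obstruction, we observe from the last sentence of Theorem~\ref{theorem:sharp-involution} that every two-element orbit $\{k,\tau(k)\}$ contributes a pair of eigenvalues $|c_k|$ and $-|c_k|$, both nonzero. If the spectrum contains no such pair $\lambda,-\lambda$ with $\lambda\neq0$, then $\tau$ has no two-element orbit, so $\tau$ is the identity on $\Omega$. In that case $M\chi_k=c_k\chi_k$ for $k\in\Omega$ and $M\chi_k=0$ otherwise, so $M$ is diagonal in the Fourier basis, which is exactly the statement that $M$ is circulant.

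The only point needing care is the bookkeeping of the eigenvalue multiset. The spectrum of $M$ is the disjoint union, over the $\tau$-orbits in $\Omega$, of the block spectra $\{c_k\}$ or $\{|c_k|,-|c_k|\}$, together with zeros for the frequencies outside $\Omega$. This holds because $B$ is block-diagonal after permuting coordinates. Consequently any two-element orbit really does place a pair $\pm\lambda$ with $\lambda\neq0$ in the spectrum of $M$, and the hypothesis excludes this. Apart from this check, the argument is mechanical.
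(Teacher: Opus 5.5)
Your proposal is correct and follows essentially the same route as the paper. Your computation $M^2\chi_k=|c_k|^2\chi_k$ is the paper's observation that $B^2$ is diagonal with entries $|c_k|^2$ on $\Omega$, and your spectral obstruction argument matches the paper's: a two-element orbit of $\tau$ would put the nonzero pair $\pm|c_k|$ into the spectrum, so $\tau$ is the identity and $B$ is diagonal. Your added check that the spectrum of $M$ is the union of the block spectra of the Hermitian $B$ just makes explicit what the paper leaves implicit.
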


\begin{proof}
By the normal form, $B^2_{k,k}=|c_k|^2$ on $\Omega$, with all other entries zero, so $B^{2r}$ is diagonal and $M^{2r}$ is circulant, and the entries of $M^2$ count common neighbors. Finally, a nontrivial two-element orbit of $\tau$ would contribute opposite nonzero eigenvalues, so under the stated spectral hypothesis $B$ is diagonal.
\end{proof}

\begin{remark}
The circulant property of $M^2$ is a useful necessary condition, but it leaves some freedom inside a repeated eigenspace of $M^2$, where a Hermitian square root can mix Fourier characters, whereas sharpness requires each individual character to be sent to a scalar multiple of one character.
\end{remark}

The entries of an adjacency matrix are integers, so applying a field automorphism to its Fourier transform only permutes the roots of unity, and this observation puts an arithmetic restriction on the pairing $\tau$. For a unit $a\in{\mathbb Z}_N^\times$, let $\sigma_a$ be the automorphism of the cyclotomic field ${\mathbb Q}(\zeta)$ determined by $\sigma_a(\zeta)=\zeta^a$.

\begin{proposition}[Arithmetic restrictions on the support involution]
\label{proposition:sharp-galois}
For every sharp labeling and every $a\in{\mathbb Z}_N^\times$,
\begin{equation}
a\Omega=\Omega,\tau(ak)=a\tau(k),c_{ak}=\sigma_a(c_k).
\label{equation:sharp-galois}
\end{equation}
If $N$ is odd, then $\tau$ preserves the additive order of every active frequency, where the additive order of $k$ is the least positive integer $q$ such that $qk=0$, and for every divisor $q$ of $N$ whose order-$q$ frequencies are active there is a unit $a_q$ modulo $q$ such that
\begin{equation}
a_q^2\equiv1\pmod q
\text{ and }
\tau(k)=a_qk\text{ when $k$ has additive order $q$}.
\label{equation:sharp-divisor-multiplier}
\end{equation}
Thus each odd prime-power factor of $q$ independently has multiplier $+1$ or $-1$ in this involution.
\end{proposition}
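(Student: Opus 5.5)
The plan is to exploit the fact that $M$ has rational integer entries, so every Galois automorphism $\sigma_a$ of ${\mathbb Q}(\zeta)$ acts on the Fourier matrix $B=UMU^*$ by reindexing. First I would compute the entries explicitly,
$$
B_{j,k}=\frac{1}{N}\sum_{x,y}M(x,y)\zeta^{-jx+ky},
$$
and apply $\sigma_a$ termwise. Since $M(x,y)\in{\mathbb Z}$, this gives $\sigma_a(B_{j,k})=B_{aj,ak}$. Because $\sigma_a$ is a field automorphism, it preserves vanishing. Hence column $ak$ of $B$ is nonzero exactly when column $k$ is, which gives $a\Omega=\Omega$. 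The unique nonzero entry of column $ak$ then sits in row $a\tau(k)$, which gives $\tau(ak)=a\tau(k)$, and its value is $\sigma_a(c_k)$. This settles \eqref{equation:sharp-galois}, using only the normal form of Theorem~\ref{theorem:sharp-involution}.

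For odd $N$, I would next show that $\tau$ preserves additive order; this is the main obstacle. Let $k\in\Omega$ have order $q$, and let $a$ be any unit with $a\equiv1\pmod q$. Then $ak=k$, so $a\tau(k)=\tau(ak)=\tau(k)$, that is, $(a-1)\tau(k)=0$. I would choose such units through the Chinese remainder theorem, one prime power $p^e\,\|\,N$ at a time:
\begin{itemize}
\item if $p\mid q$, take $a\equiv1+p^{v_p(q)}\pmod{p^e}$, which is a unit;
\item if $p\nmid q$, take $a\equiv2\pmod{p^e}$, which is a unit with $a-1$ invertible modulo $p$.
\end{itemize}
Here oddness is essential, because modulo $2$ no unit differs from $1$. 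With these choices, $\gcd(a-1,N)$ divides $q$ locally at every prime, so $(a-1)\tau(k)=0$ forces the order of $\tau(k)$ to divide $q$. Applying the same argument to $\tau(k)$ and using $\tau^2=\mathrm{id}$ gives the reverse divisibility, so the two orders are equal.

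Finally, I would use that the elements of order $q$ are exactly $u\cdot\frac{N}{q}$ with $u\in({\mathbb Z}_q)^\times$, forming a single regular orbit of the units modulo $q$. If order-$q$ frequencies are active, the whole orbit is active by $a\Omega=\Omega$. Write $\tau(N/q)=a_q\frac{N}{q}$, where $a_q$ is a unit modulo $q$ because orders are preserved. Equivariance then gives
$$
\tau\Bigl(u\frac{N}{q}\Bigr)=u\,a_q\frac{N}{q}\quad\text{for every unit }u,
$$
that is, $\tau(k)=a_qk$ whenever $k$ has order $q$. The relation $\tau^2=\mathrm{id}$ gives $a_q^2\equiv1\pmod q$. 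Since $({\mathbb Z}_{p^f})^\times$ is cyclic for odd $p$, the only square roots of $1$ there are $\pm1$. Hence each odd prime-power factor of $q$ carries multiplier $+1$ or $-1$ independently, by the Chinese remainder theorem.
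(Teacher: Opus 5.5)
Your proposal is correct and follows essentially the same route as the paper. Both arguments use the Galois reindexing $\sigma_a(B_{j,k})=B_{aj,ak}$ together with uniqueness of the occupied entry in each active column, a comparison of unit stabilizers to show that $\tau$ preserves additive order, and transitivity of the units on order-$q$ frequencies to produce the multiplier $a_q$. The differences are cosmetic: you exhibit a single unit $a\equiv1\pmod q$ with $\gcd(a-1,N)=q$ instead of using the strict nesting of the subgroups $K_q$, and you use cyclicity of $({\mathbb Z}_{p^f})^\times$ instead of the observation that $\gcd(a_q-1,a_q+1)$ divides $2$.
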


\begin{proof}
The formula
$$
B_{u,v}=\frac{1}{N}\sum_{x,y}M(x,y)\zeta^{-ux+vy}
$$
shows that $\sigma_a(B_{u,v})=B_{au,av}$, because the entries $M(x,y)$ are integers. Galois automorphisms preserve nonzero entries, so the uniqueness of the occupied entry in each active column proves \eqref{equation:sharp-galois}.

Suppose now that $N$ is odd. A frequency of additive order $q$ has stabilizer
$$
K_q=\{a\in{\mathbb Z}_N^\times:a\equiv1\pmod q\}
$$
under multiplication by units, and the subgroups $K_q$ distinguish the divisors of odd $N$. Indeed, by the Chinese remainder theorem this reduces to the subgroups of ${\mathbb Z}_{p^e}^\times$ defined by $a\equiv1\pmod{p^j}$ for $0\leq j\leq e$, and these are strictly nested when $p$ is odd, since the first step excludes a unit not congruent to one modulo $p$ and each subsequent step excludes $1+p^j$. An equivariant involution preserves stabilizers in both directions, so $k$ and $\tau(k)$ have the same additive order.

The units act transitively on the frequencies of each fixed order $q$. If $k_0=\frac{N}{q}$, write $\tau(k_0)=a_qk_0$, and equivariance then forces $\tau(k)=a_qk$ on the entire order-$q$ orbit, while involutivity yields $a_q^2\equiv1\pmod q$. For an odd prime power $p^j$, the factors $a_q-1$ and $a_q+1$ have greatest common divisor dividing two, so $p^j$ divides one of them, and the final assertion follows from the Chinese remainder theorem.
\end{proof}

\begin{theorem}[Complete classification at odd prime-power order]
\label{theorem:sharp-prime-power}
Let $N=p^r$ with $p$ an odd prime and $r\geq1$. A simple graph with at least one edge on $N$ vertices attains the graph-energy bound for some labeling if and only if it is a circulant graph, and in fact every sharp labeling is itself circulant.
\end{theorem}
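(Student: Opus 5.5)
The plan is to combine the arithmetic restriction of Proposition~\ref{proposition:sharp-galois} with the $\{0,1\}$-valuedness and zero diagonal of $M$. First, the converse direction. If $G$ is circulant, label it so that $M$ is a circulant matrix. Then $B=UMU^*$ is diagonal, which is a weighted partial permutation matrix, so the labeling is sharp by Theorem~\ref{theorem:prior-equality-criterion}. It therefore remains to show that every sharp labeling at order $N=p^r$ is circulant.

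Fix a sharp labeling with data $\Omega,\tau,c_k$ as in Theorem~\ref{theorem:sharp-involution}.

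\emph{Step 1: the multipliers are $\pm1$.} Every additive order $q$ divides $N$, so it is a power $p^j$. By Proposition~\ref{proposition:sharp-galois}, on each active order class $\tau(k)=a_qk$ with $a_q^2\equiv1\pmod{p^j}$. Since $p$ is odd, this forces $a_q\equiv\pm1\pmod{p^j}$. Hence $\Omega=S_+\sqcup S_-$, where $\tau(k)=k$ on $S_+$ and $\tau(k)=-k$ on $S_-$. Both sets are unions of order classes, and $0\in S_+$.

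\emph{Step 2: split $M$ into a circulant part and a Hankel part.} Substituting Step 1 into \eqref{equation:sharp-inverse-normal-form} gives
$$
M(x,y)=g(x-y)+h(x+y),
$$
where $g(u)=\frac1N\sum_{k\in S_+}c_k\zeta^{ku}$ and $h(v)=\frac1N\sum_{k\in S_-}c_k\zeta^{-kv}$. Since $0\notin S_-$, the function $h$ has mean zero, so $h=0$ as soon as $h$ is constant. Because $N$ is odd, $2$ is a unit, so $(x,y)\mapsto(u,v)=(x-y,x+y)$ is a bijection of ${\mathbb Z}_N^2$. Consequently $F(u,v)=g(u)+h(v)$ takes only the values $0$ and $1$, for every pair $(u,v)$.

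\emph{Step 3: show $h$ is constant.} This is the main point. Suppose $h(v_1)\neq h(v_2)$ and put $\delta=h(v_1)-h(v_2)$. For every $u$, the two numbers $g(u)+h(v_1)$ and $g(u)+h(v_2)$ both lie in $\{0,1\}$ and differ by $\delta\neq0$. Hence $\delta=\pm1$, and the pair of values is forced: say $\delta=1$, so that $g(u)+h(v_2)=0$ for all $u$. Then $g$ is constant, so $M(x,y)$ depends only on $x+y$.

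The zero diagonal now gives the contradiction. We get $0=M(x,x)=F(0,2x)$ for all $x$, and since $2x$ runs over all of ${\mathbb Z}_N$, this gives $M\equiv0$. That contradicts the existence of an edge. Hence $h$ is constant, so $h=0$ and $S_-=\emptyset$.

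\emph{Step 4: conclude.} With $h=0$ we have $M(x,y)=g(x-y)$, so $M$ is circulant and $G$ is a circulant graph.

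The delicate point I expect is Step 1. It is exactly where the prime-power hypothesis enters. For composite odd $N$, the Chinese remainder theorem allows mixed multipliers (for example $+1$ modulo one prime and $-1$ modulo another), and then the decomposition into a function of $x-y$ plus a function of $x+y$ breaks down. Steps 2--3 use only oddness of $N$, together with the zero-diagonal and $\{0,1\}$ conditions from the remark following Theorem~\ref{theorem:sharp-involution}.
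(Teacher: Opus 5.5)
Your proof is correct and follows essentially the paper's route: Proposition~\ref{proposition:sharp-galois} forces $\tau(k)=\pm k$ on each active order class, and the zero diagonal, combined with the invertibility of $2$ modulo odd $N$, eliminates the part of $M$ that depends on $x+y$. The only difference is that your detour through the $\{0,1\}$ values in Step 3 is unnecessary, since $0=M(x,x)=g(0)+h(2x)$ for all $x$ already makes $h$ constant and hence zero, which is exactly the paper's linear-independence argument applied to the distinct frequencies $-2k$.
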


\begin{proof}
By Proposition~\ref{proposition:sharp-galois}, on each active frequency orbit of order $p^j$ the involution is either $k\mapsto k$ or $k\mapsto-k$. Let $\Omega_-$ be the union of the nonzero orbits on which $\tau(k)=-k\ne k$. The zero diagonal and \eqref{equation:sharp-inverse-normal-form} give, for every $x$,
$$
0=M(x,x)=\frac{1}{N}\left(\sum_{k\in\Omega\setminus\Omega_-}c_k+\sum_{k\in\Omega_-}c_k\zeta^{-2kx}\right).
$$
Because $N$ is odd, the nonzero frequencies $-2k$ in the second sum are distinct, and the linear independence of the cyclic characters forces $c_k=0$ for every $k\in\Omega_-$, contrary to activity. Hence $\Omega_-$ is empty, $B$ is diagonal, and $M$ is circulant. Conversely, every circulant adjacency matrix is diagonalized by $U$ and consequently has a sharp labeling.
\end{proof}

At an order with more than one odd prime factor, the involution can use different signs on different factors, and the following construction from \cite[Theorem 2.16]{GuptaIosevichEtAl2026} turns this possibility into sharp graphs.

\begin{proposition}[Twisted difference construction]
\label{proposition:sharp-twisted-difference}
Let $a\in{\mathbb Z}_N^\times$ satisfy $a^2=1$, and choose a nonempty $S\subseteq{\mathbb Z}_N$ satisfying
$$
S=-aS
\text{ and }
S\cap(1-a){\mathbb Z}_N=\varnothing.
$$
Then
\begin{equation}
M(x,y)=1_S(y-ax)
\label{equation:sharp-twisted-graph}
\end{equation}
is the adjacency matrix of a simple $|S|$-regular graph with a sharp labeling, whose Fourier action is
$$
M\chi_k=\left(\sum_{s\in S}\zeta^{ks}\right)\chi_{ak}.
$$
\end{proposition}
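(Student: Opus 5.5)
We need to check three things about $M(x,y)=1_S(y-ax)$: it is a simple graph, every vertex has degree $|S|$, and its action on characters is the stated one. Sharpness will then come from Theorem~\ref{theorem:sharp-involution}, with $\Omega={\mathbb Z}_N$, involution $\tau(k)=ak$ (an involution because $a^2=1$), and $c_k=\sum_{s\in S}\zeta^{ks}$. Note that $k\notin\Omega$ is impossible here, and that $c_k$ may vanish for some $k$. Such $k$ are simply inactive, so Theorem~\ref{theorem:sharp-involution} should be applied with $\Omega=\{k:c_k\neq0\}$. That set is $\tau$-stable because $c_{ak}=\sum_{s}\zeta^{aks}=\sum_{s'\in aS}\zeta^{ks'}$ and $aS=-S$.

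\textbf{Symmetry.} I first show $M(x,y)=M(y,x)$. We have $M(y,x)=1_S(x-ay)$, and multiplying the argument by $-a$ gives $-a(x-ay)=a^2y-ax=y-ax$. Since $S=-aS$, the element $x-ay$ lies in $S$ if and only if $-a(x-ay)=y-ax$ does. So the matrix is symmetric.

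\textbf{No loops.} Next, $M(x,x)=1_S((1-a)x)$, which vanishes because $S\cap(1-a){\mathbb Z}_N=\varnothing$. The entries are already $0$ or $1$.

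\textbf{Regularity.} For fixed $x$, the map $y\mapsto y-ax$ is a bijection of ${\mathbb Z}_N$. Hence row $x$ contains exactly $|S|$ ones, and the graph is $|S|$-regular.

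\textbf{Fourier action.} Writing $\chi_k(y)=N^{-1/2}\zeta^{ky}$, we compute
$$
(M\chi_k)(x)=N^{-\frac12}\sum_y1_S(y-ax)\zeta^{ky}
=N^{-\frac12}\sum_{s\in S}\zeta^{k(s+ax)}
=\Bigl(\sum_{s\in S}\zeta^{ks}\Bigr)\chi_{ak}(x).
$$
The substitution used is $y=s+ax$. This is exactly the stated formula.

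\textbf{Sharpness.} The computation shows that each column of $B=UMU^*$ has at most one nonzero entry, located in row $ak$. Since $k\mapsto ak$ is a bijection, each row also has at most one nonzero entry. So $B$ is a weighted partial permutation matrix. Because $\widehat M$ differs from $B$ only by a column permutation, Theorem~\ref{theorem:prior-equality-criterion} gives sharpness of the labeling directly. This avoids having to verify the data conditions of Theorem~\ref{theorem:sharp-involution} separately.

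I expect no step to be a real obstacle. The one point needing care is the symmetry check, which is where both $a^2=1$ and $S=-aS$ are used.
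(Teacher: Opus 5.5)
Your proposal is correct and follows essentially the same route as the paper: zero diagonal from $S\cap(1-a){\mathbb Z}_N=\varnothing$, symmetry from $-a(x-ay)=y-ax$ together with $S=-aS$, row sums equal to $|S|$, the substitution $y=s+ax$ for the character action, and sharpness because multiplication by the unit $a$ permutes the frequencies, so $B$, and hence $\widehat M$, is a weighted partial permutation matrix. The opening detour through the normal-form data of Theorem~\ref{theorem:sharp-involution} is unnecessary, since you finish with the equality criterion directly; it includes the claim that no $k$ lies outside $\Omega={\mathbb Z}_N$, which you then correct to $\Omega=\{k:c_k\neq0\}$.
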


\begin{proof}
The second condition makes the diagonal zero, and since $x-ay=-a(y-ax)$, the first condition supplies symmetry. Each row has $|S|$ ones. Substituting $y=ax+s$ directly produces the character action, which is a weighted partial permutation because multiplication by $a$ permutes the frequencies.
\end{proof}

\begin{example}[A connected nonsingular sharp graph that is not Cayley]
\label{example:sharp-fifteen}
Identify ${\mathbb Z}_{15}$ with ${\mathbb Z}_3\times{\mathbb Z}_5$ and join $(u,v)$ and $(u',v')$ exactly when
\begin{equation}
u'\ne u
\text{ and }
v+v'\in\{0,1\}\pmod5.
\label{equation:sharp-fifteen-edges}
\end{equation}
This is the twisted construction with $a=4\pmod{15}$, which acts as $(u,v)\mapsto(u,-v)$, and $S=\{1,2\}\times\{0,1\}$, so the graph is simple, $4$-regular and sharp.

Let us examine this graph directly. Write $J_3$ for the $3\times3$ all-ones matrix, put $K=J_3-I_3$, and let $H_{v,w}=1_{\{0,1\}}(v+w)$ on ${\mathbb Z}_5$, so that the adjacency matrix in product coordinates is $K\otimes H$. The matrix $H$ is symmetric and has two ones in each row, and its edges between distinct vertices form the path
$$
0\longleftrightarrow1\longleftrightarrow4\longleftrightarrow2\longleftrightarrow3,
$$
with a loop at each endpoint, as in Figure~\ref{figure:fifteen}. Its eigenvalue $2$ is simple, since if $Hz=2z$ and we choose a coordinate of maximal modulus, equality in the averaging relation propagates its value along the path, so that $z$ is constant, and all eigenvalues of $H$ have modulus at most $2$. Since the eigenvalues of $K$ are $2,-1,-1$, the eigenvalue $4$ of $K\otimes H$ is simple, because it can only arise from the product $2\cdot2$. A $4$-regular graph has one independent $4$-eigenvector for each connected component, so the graph is connected.

\begin{figure}[htb]
\centering
\begin{tikzpicture}[x=1cm,y=1cm,every node/.style={circle,draw,inner sep=2pt,font=\small}]
\node (a) at (0,0) {$0$};
\node (b) at (2,0) {$1$};
\node (c) at (4,0) {$4$};
\node (d) at (6,0) {$2$};
\node (e) at (8,0) {$3$};
\draw[thick] (a) -- (b) -- (c) -- (d) -- (e);
\draw[thick] (a) to[out=125,in=55,looseness=14] (a);
\draw[thick] (e) to[out=125,in=55,looseness=14] (e);
\node[draw=none,font=\footnotesize] at (0,1.25) {$v+w=0$};
\node[draw=none,font=\footnotesize] at (8,1.25) {$v+w=1$};
\node[draw=none,font=\footnotesize] at (1,-0.5) {$v+w=1$};
\node[draw=none,font=\footnotesize] at (3,-0.5) {$v+w=0$};
\node[draw=none,font=\footnotesize] at (5,-0.5) {$v+w=1$};
\node[draw=none,font=\footnotesize] at (7,-0.5) {$v+w=0$};
\end{tikzpicture}
\caption{The factor $H_{v,w}=1_{\{0,1\}}(v+w)$ on ${\mathbb Z}_5$ in Example~\ref{example:sharp-fifteen}, drawn as a graph with loops. Each row has exactly two ones, and the adjacency matrix of the $15$-vertex example is $(J_3-I_3)\otimes H$, so a vertex $(u,v)$ is joined to the two vertices $(u',w)$ with $u'\ne u$ and $w$ a neighbor of $v$ in this picture, loops included.}
\label{figure:fifteen}
\end{figure}

The character action of $H$ has coefficients $1+e^{\frac{2\pi ik}{5}}$, none of which vanishes, so $H$, and consequently $K\otimes H$, is nonsingular.

Nevertheless the graph is not vertex-transitive. The number of triangles containing $(u,v)$ is
$$
\frac{(K^3)_{u,u}(H^3)_{v,v}}{2}=(H^3)_{v,v},
$$
and $(H^3)_{v,v}$ counts the triples $s_1,s_2,s_3\in\{0,1\}$ with $s_1-s_2+s_3=2v\pmod5$, which at $v=0,1,2,3,4$ number respectively $3,1,1,3,0$. An automorphism preserves the number of triangles containing a vertex, and since these counts differ, the graph is not vertex-transitive. Every Cayley graph is vertex-transitive under left translations, so this sharp graph is not a Cayley graph of any group, and in particular connectedness and nonsingularity do not extend Theorem~\ref{theorem:sharp-prime-power} to arbitrary odd orders. The order $15$ is the smallest possible odd order for a sharp graph that is not vertex-transitive, because every odd integer $N$ with $1<N<15$ is a prime power, so that Theorem~\ref{theorem:sharp-prime-power} makes every sharp graph of those orders circulant and hence vertex-transitive.
\end{example}

\begin{proposition}[A labeling and its complement]
\label{proposition:sharp-complement}
Suppose that $G$ and its complement both have at least one edge. A fixed labeling is sharp for both graphs if and only if it is circulant.
\end{proposition}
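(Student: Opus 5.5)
The complement has adjacency matrix $\overline M=J-I-M$, where $J$ is the all-ones matrix. The plan is to compare the character actions of $M$ and $\overline M$ and use the normal form of Theorem~\ref{theorem:sharp-involution} for both.

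\emph{Circulant implies sharp for both.} If the labeling is circulant, then $M$ is diagonalized by $U$. The matrix $J-I$ is circulant, so $\overline M$ is circulant as well. A circulant matrix has a diagonal Fourier transform $B$, hence is a weighted partial permutation matrix, so both labelings are sharp by Theorem~\ref{theorem:prior-equality-criterion}. The hypothesis that each graph has an edge is needed only so that sharpness is defined for both.

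\emph{Sharp for both implies circulant.} Use the character actions. We have $J\chi_k=0$ for $k\neq0$ and $J\chi_0=N\chi_0$. By regularity (Theorem~\ref{theorem:prior-extremizer-regularity}), $M\chi_0=d\chi_0$. Hence for $k\neq0$,
$$
\overline M\chi_k=-\chi_k-M\chi_k.
$$
Suppose $k\neq0$ is active for $M$ with $\tau(k)\neq k$. Then $M\chi_k=c_k\chi_{\tau(k)}$ with $c_k\neq0$, so
$$
\overline M\chi_k=-\chi_k-c_k\chi_{\tau(k)}.
$$
This is a combination of two distinct characters with both coefficients nonzero, so $\overline M$ sends $\chi_k$ to a vector with two nonzero Fourier coordinates. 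That contradicts the equality criterion for $\overline M$, which requires each column of its $B$-matrix to have at most one nonzero entry. Therefore $\tau(k)=k$ for every active $k\neq0$. We already know $\tau(0)=0$ from \eqref{equation:sharp-hermitian-data}, and inactive frequencies are sent to zero. So $B=UMU^*$ is diagonal, and $M$ is circulant.

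The only step that needs care is confirming that $\chi_k$ and $\chi_{\tau(k)}$ are linearly independent with nonzero coefficients in that decomposition. This holds immediately because characters with distinct indices are orthonormal and $c_k\neq0$. Everything else is direct bookkeeping with the normal form, so no genuine obstacle is expected.
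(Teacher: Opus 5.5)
Your proof is correct and is essentially the paper's argument. You read off column $k$ of the Fourier conjugate of $\overline M$ through $\overline M\chi_k=-\chi_k-c_k\chi_{\tau(k)}$, while the paper inspects the corresponding row of $B^{\mathrm c}=NE_{0,0}-I-B$; because $B$ is Hermitian, these are the same observation.
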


\begin{proof}
If $B=UMU^*$ is a weighted partial permutation matrix, regularity forces $B_{0,0}=d$ and no other occupied entry in row or column zero. For the complement the conjugated adjacency matrix is
$$
B^{\mathrm c}=NE_{0,0}-I-B,
$$
where $E_{0,0}$ has a single one in position $(0,0)$ and zeros elsewhere. Any off-diagonal occupied entry of $B$ occurs in a nonzero row whose diagonal entry is zero, and that row of $B^{\mathrm c}$ then contains both the negative of the off-diagonal entry and a diagonal entry $-1$, contradicting sharpness. Thus $B$ must be diagonal. Conversely, if $B$ is diagonal then so is $B^{\mathrm c}$, and both labelings are sharp.
\end{proof}

For general $N$ the classification reduces to solving the zero-diagonal and zero-one conditions in the Fourier involution normal form, subject to the arithmetic restrictions above. The examples above realize some of these data through the twisted difference construction, and a classification at general order would require a description of all data satisfying those conditions.

\section{Sparse Fourier approximation of adjacency functions}
\label{section:approximation}

How many Fourier coefficients must we retain to approximate an adjacency function? Orthogonality tells us to keep the largest coefficients, and we begin with this observation on a finite abelian group.

Let $H$ be a finite abelian group of order $M$. A trigonometric polynomial on $H$ is a finite linear combination of characters, and the characters that occur with nonzero coefficients are called its Fourier frequencies. For an integer $k\geq0$, let $\mathcal T_k(H)$ be the set of functions of the form
$$
P(x)=\sum_{\gamma\in S}c_\gamma\gamma(x),
$$
where $S\subset\widehat H$, $|S|\leq k$ and $c_\gamma\in{\mathbb C}$, so that $P$ uses at most $k$ Fourier frequencies. In particular $\mathcal T_0(H)=\{0\}$, and since $|\widehat H|=M$, one has $\mathcal T_k(H)=\mathcal T_M(H)$ whenever $k\geq M$. The best $k$-term Fourier approximation error is
$$
\sigma_k(f)_2=\inf_{P\in\mathcal T_k(H)}\|f-P\|_2.
$$

\begin{proposition}[Exact best $k$-term approximation]
\label{proposition:exact-best-k-general}
Let $0\leq k\leq M$, and let $a_1\geq a_2\geq\cdots\geq a_M$ be the $M$ numbers $|\widehat f(\gamma)|$, $\gamma\in\widehat H$, listed in nonincreasing order with repetitions retained. Then
$$
\sigma_k(f)_2^2=\sum_{j>k}a_j^2,
$$
and in particular the best $k$-term approximation is obtained by retaining the $k$ largest Fourier coefficients.
\end{proposition}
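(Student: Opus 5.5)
The plan is to move the whole problem to the Fourier side, where approximation by at most $k$ frequencies becomes approximation of a coefficient vector by vectors with at most $k$ nonzero entries. Since the normalized characters $M^{-\frac{1}{2}}\gamma$ form an orthonormal basis and the Fourier transform is unitary, Plancherel \eqref{equation:plancherel} gives $\|f-P\|_2=\|\widehat f-\widehat P\|_2$. For $P\in\mathcal T_k(H)$ with frequency set $S$, $|S|\leq k$, the transform $\widehat P$ is supported in $S$. Conversely, every vector supported on a set of size at most $k$ is the transform of some element of $\mathcal T_k(H)$, namely its inverse transform \eqref{equation:finite-fourier-inversion}, with the factor $M^{-\frac{1}{2}}$ absorbed into the coefficients $c_\gamma$. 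So the problem reduces to minimizing $\|\widehat f-v\|_2$ over vectors $v\in{\mathbb C}^{\widehat H}$ with $|\operatorname{supp}v|\leq k$.

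For a fixed support set $S$, I will split the squared distance orthogonally:
$$
\|\widehat f-v\|_2^2=\sum_{\gamma\in S}|\widehat f(\gamma)-v(\gamma)|^2+\sum_{\gamma\notin S}|\widehat f(\gamma)|^2.
$$
This is minimized by taking $v=\widehat f\,1_S$, and the minimum is $\sum_{\gamma\notin S}|\widehat f(\gamma)|^2$. Next I minimize over $S$. Since $\sum_{\gamma\notin S}|\widehat f(\gamma)|^2=\|f\|_2^2-\sum_{\gamma\in S}|\widehat f(\gamma)|^2$, I have to maximize the sum of at most $k$ of the numbers $a_j^2$. Such a sum is at most $\sum_{j\leq k}a_j^2$, because the $i$-th largest element of any chosen subset is at most $a_i$. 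The bound is attained by letting $S$ consist of frequencies realizing $a_1,\ldots,a_k$, with ties broken arbitrarily. This gives $\sigma_k(f)_2^2=\sum_{j>k}a_j^2$. The infimum is attained because there are finitely many supports and each inner problem is solved exactly. The same argument shows that retaining the $k$ largest coefficients is optimal.

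None of the steps is a serious obstacle. The only point needing care is bookkeeping: the normalization between the coefficients $c_\gamma$ and $\widehat P(\gamma)=M^{\frac{1}{2}}c_\gamma$, and checking that both the edge case $k=0$ and the case of zero coefficients among the top $k$ (where $|\operatorname{supp}|<k$) fit the formula without change.
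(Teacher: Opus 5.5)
Your proposal is correct and follows essentially the same route as the paper: fix a frequency set $S$ with $|S|\leq k$, note that the optimal approximation with spectrum in $S$ keeps $\widehat f$ on $S$ (the orthogonal projection), use Plancherel to identify the error as the mass of $\widehat f$ outside $S$, and then choose $S$ to contain the $k$ largest coefficients. Your explicit handling of the normalization $\widehat P(\gamma)=M^{\frac{1}{2}}c_\gamma$ and of the maximization over $S$ simply spells out steps the paper leaves implicit.
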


\begin{proof}
Fix $S\subset\widehat H$ with $|S|\leq k$. The functions whose Fourier transforms are supported in $S$ form a linear subspace of $\ell^2(H)$, and the orthogonal projection of $f$ onto this subspace has Fourier transform equal to $\widehat f$ on $S$ and to zero off $S$. By Plancherel,
$$
\|f-P\|_2^2=\sum_{\gamma\notin S}|\widehat f(\gamma)|^2,
$$
and this sum is minimized by choosing $S$ to contain the $k$ largest values of $|\widehat f(\gamma)|$, so the minimum is $\sum_{j>k}a_j^2$.
\end{proof}

\begin{definition}
For a nonzero function $f:H\to{\mathbb C}$ and $0<\eta<1$, its Fourier approximation complexity is
$$
\kappa_\eta(f)=\min\{k\in\{0,1,\ldots,M\}:\sigma_k(f)_2\leq\eta\|f\|_2\},
$$
a minimum that exists because $\sigma_M(f)_2=0$. For a labeled graph $(G,\pi)$ we write $\kappa_\eta(G,\pi)=\kappa_\eta(f_\pi)$, so that $\kappa_\eta(G,\pi)$ counts the Fourier frequencies needed to approximate the adjacency function to relative $L^2$ error at most $\eta$.
\end{definition}

\begin{theorem}[Spectral lower bound and sharp labelings]
\label{theorem:spectral-best-k}
Let $G$ be a simple graph with at least one edge and adjacency matrix $A$, and list its singular values as $a_1\geq\cdots\geq a_N\geq0$. For every labeling $\pi$ and every integer $0\leq k\leq N$,
$$
\sigma_k(f_\pi)_2^2\geq\sum_{j>k}a_j^2.
$$
If $\pi_*$ is a sharp labeling, equality holds for every $k$. Consequently, the same labeling minimizes the best $k$-term error for every $k$ and the relative approximation complexity for every $0<\eta<1$, and more precisely
$$
\min_\pi\sigma_k(f_\pi)_2^2=\sum_{j>k}a_j^2
\text{ and }
\min_\pi\kappa_\eta(G,\pi)=\min\left\{k:\sum_{j>k}a_j^2\leq\eta^2\sum_{j=1}^Na_j^2\right\},
$$
where the first identity is understood with zero tails when $k\geq N$.
\end{theorem}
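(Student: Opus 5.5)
The plan is to combine Proposition~\ref{proposition:exact-best-k-general}, applied on $H={\mathbb Z}_N^2$, with an Eckart--Young type rank argument for the matrix $\widehat M_\pi=UM_\pi U$. By that proposition, $\sigma_k(f_\pi)_2^2$ equals $\|\widehat M_\pi-P\|_{\mathrm F}^2$ minimized over matrices $P$ obtained by keeping at most $k$ entries of $\widehat M_\pi$ and zeroing the rest. The minimum is attained at the matrix $P_k$ that retains the $k$ largest entries.

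The lower bound comes from a rank count. A matrix with at most $k$ nonzero entries has rank at most $k$, so $\operatorname{rank}(P_k)\leq k$. By the Eckart--Young--Mirsky theorem (or directly Weyl's inequality $s_{j+k}(X+Y)\leq s_j(X)+s_{k+1}(Y)$ with $s_{k+1}(P_k)=0$), every matrix $P$ of rank at most $k$ satisfies
$$
\|\widehat M_\pi-P\|_{\mathrm F}^2\geq\sum_{j>k}s_j(\widehat M_\pi)^2.
$$
Unitary invariance gives $s_j(\widehat M_\pi)=s_j(A)=a_j$, since relabeling is a permutation similarity and $U$ is unitary. This yields $\sigma_k(f_\pi)_2^2\geq\sum_{j>k}a_j^2$. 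For $k\geq N$ the right side is an empty sum, and the inequality is trivial.

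For equality, Theorem~\ref{theorem:extremizer-exact-energy} says that under a sharp labeling the nonzero entry moduli of $\widehat M_{\pi_*}$, counted with multiplicity, are exactly the nonzero singular values $a_j$. The list of all $N^2$ moduli in nonincreasing order is therefore $a_1,\ldots,a_N$ followed by zeros. Proposition~\ref{proposition:exact-best-k-general} then gives $\sigma_k(f_{\pi_*})_2^2=\sum_{j>k}a_j^2$ for every $k$, with a zero tail for $k\geq N$.

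Both minimization formulas follow. The first is the lower bound together with its attainment at $\pi_*$. For the second, $\kappa_\eta(G,\pi)$ is the least $k$ with $\sigma_k(f_\pi)_2^2\leq\eta^2\|f_\pi\|_2^2$, and $\|f_\pi\|_2^2=2s=\sum_j a_j^2$ is independent of $\pi$. Since $\sigma_k(f_\pi)_2^2\geq\sigma_k(f_{\pi_*})_2^2$ for each $k$, any $k$ admissible for $\pi$ is admissible for $\pi_*$, so $\kappa_\eta(G,\pi)\geq\kappa_\eta(G,\pi_*)$, and the latter equals the stated spectral minimum.

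The only nonroutine ingredient is the Eckart--Young step relating $k$-sparse approximation to rank-$k$ approximation, and this is standard. One point of bookkeeping needs care: the statement's $\min_\pi$ formulas presume that a sharp labeling exists, so I would read them under that hypothesis, with the inequality holding unconditionally.
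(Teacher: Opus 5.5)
Your proof is correct and follows the paper's argument. The paper also reduces the lower bound to the best rank-$k$ approximation bound, and obtains equality for sharp labelings because the nonzero Fourier moduli are the nonzero singular values. The paper's rank step is in the vertex domain: a $k$-term Fourier polynomial is a sum of $k$ rank-one character matrices, and it proves the Eckart--Young inequality directly via the projector onto the column space of $P$. Your version is the same argument on the Fourier side: sparse implies low rank, then cite Eckart--Young. Your caveat that the $\min_\pi$ formulas are asserted only under the existence of a sharp labeling is the right reading of the statement.
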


\begin{proof}
Each character $(x,y)\mapsto e^{\frac{2\pi i(mx+ny)}{N}}$ is a rank-one matrix, so a $k$-term Fourier polynomial $P$ has matrix rank at most $k$. Let $R$ be the orthogonal projector onto its column space. Since $(I-R)P=0$, we have
$$
\|A_\pi-P\|_{\mathrm F}^2\geq\|(I-R)(A_\pi-P)\|_{\mathrm F}^2=\|(I-R)A_\pi\|_{\mathrm F}^2=\|A\|_{\mathrm F}^2-\operatorname{tr}(RA_\pi A_\pi^*).
$$
If $u_j$ are orthonormal eigenvectors of $A_\pi A_\pi^*$ with eigenvalues $a_j^2$, then the numbers $w_j=\langle Ru_j,u_j\rangle$ lie in $[0,1]$ and sum to at most $k$, and therefore
$$
\operatorname{tr}(RA_\pi A_\pi^*)=\sum_ja_j^2w_j\leq\sum_{j\leq k}a_j^2,
$$
which proves the lower bound after taking the infimum over $P$. For a sharp labeling the nonzero absolute Fourier coefficients are exactly the nonzero singular values of $A$, so retaining the $k$ largest Fourier coefficients attains equality, and the assertions about minimization follow.
\end{proof}

\begin{remark}
The lower bound is the usual best rank-$k$ approximation bound, which applies here because a sum of $k$ Fourier characters has rank at most $k$. The Singer graphs below have sharp labelings, so their exact approximation formulas also give the minimum over all labelings.
\end{remark}

\begin{theorem}[Collision obstruction to sparse approximation]
\label{theorem:collision-approximation-general}
Let $f:H\to{\mathbb C}$ be nonzero. Then
\begin{equation}
\frac{\sigma_k(f)_2^2}{\|f\|_2^2}\geq\max\left\{0,1-\sqrt{\frac{k}{D_4(f)}}\right\}.
\label{equation:collision-approximation-lower}
\end{equation}
Consequently, if $P\in\mathcal T_k(H)$ and $\|f-P\|_2<\eta\|f\|_2$ for some $0<\eta<1$, then
\begin{equation}
k>(1-\eta^2)^2D_4(f).
\label{equation:necessary-k-general}
\end{equation}
\end{theorem}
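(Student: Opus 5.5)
By Proposition~\ref{proposition:exact-best-k-general} it suffices to handle the optimal approximant: retain the $k$ largest Fourier coefficients and bound from above the captured mass $\sum_{j\leq k}a_j^2$, where $a_1\geq\cdots\geq a_M$ are the moduli $|\widehat f(\gamma)|$ in nonincreasing order. Normalize by $\|f\|_2^2=\sum_ja_j^2$ and work with the probability weights $p_j=a_j^2/\|f\|_2^2$ from Section~\ref{section:finite-fourier}. These satisfy $\sum_jp_j^2=1/D_4(f)$ by the definition of $D_4$ and Plancherel.

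The key step is Cauchy--Schwarz on the top $k$ indices:
$$
\sum_{j\leq k}p_j\leq\sqrt{k}\left(\sum_{j\leq k}p_j^2\right)^{\frac12}\leq\sqrt{k}\left(\sum_jp_j^2\right)^{\frac12}=\sqrt{\frac{k}{D_4(f)}}.
$$
Hence $\sigma_k(f)_2^2/\|f\|_2^2=1-\sum_{j\leq k}p_j\geq1-\sqrt{k/D_4(f)}$. The trivial bound $\sigma_k\geq0$ supplies the maximum with $0$, which proves \eqref{equation:collision-approximation-lower}. The case $k=0$ is consistent, since $\sigma_0=\|f\|_2$. For $k\geq M$ the right side is $\leq0$ because $D_4(f)\leq M$ by Theorem~\ref{theorem:collision-hierarchy-general}, so it is harmless.

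For \eqref{equation:necessary-k-general}, take $P\in\mathcal T_k(H)$ with $\|f-P\|_2<\eta\|f\|_2$. Then $\sigma_k(f)_2^2<\eta^2\|f\|_2^2$, and combining this with \eqref{equation:collision-approximation-lower} gives $1-\sqrt{k/D_4(f)}<\eta^2$. Thus $\sqrt{k/D_4(f)}>1-\eta^2>0$, and squaring, which is legitimate since both sides are positive, yields $k>(1-\eta^2)^2D_4(f)$. The inequality is strict because the hypothesis is strict.

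No step is a real obstacle. The only care needed is to check that the Cauchy--Schwarz step is taken over exactly $k$ indices (fewer if $k>M$, which only helps) and that the strictness propagates into the final bound.
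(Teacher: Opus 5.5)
Your proposal is correct and follows the paper's argument: it uses the same Cauchy--Schwarz bound $\sum_{\gamma\in S}|\widehat f(\gamma)|^2\leq\sqrt{k}\,\|\widehat f\|_4^2=\sqrt{k/D_4(f)}\,\|f\|_2^2$ on the mass captured by at most $k$ frequencies, and the same rearrangement for the strict consequence. The only cosmetic difference is that you first reduce to the top $k$ coefficients via Proposition~\ref{proposition:exact-best-k-general}, whereas the paper bounds the captured mass for an arbitrary support $S$ with $|S|\leq k$ and then takes the infimum.
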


\begin{proof}
For any frequency set $S$ of size at most $k$, the Cauchy--Schwarz inequality shows that
$$
\sum_{\gamma\in S}|\widehat f(\gamma)|^2\leq\sqrt k\|\widehat f\|_4^2=\sqrt{\frac{k}{D_4(f)}}\|f\|_2^2.
$$
By Plancherel, an approximation supported on $S$ leaves at least the energy outside $S$, and taking the infimum yields \eqref{equation:collision-approximation-lower}. If the relative error is strictly less than $\eta$, then $1-\sqrt{\frac{k}{D_4(f)}}<\eta^2$, which rearranges to the second assertion.
\end{proof}

The collision dimension tells us how many coefficients are necessary. To obtain an upper bound we sample Fourier coefficients with probabilities proportional to their absolute values, which is the empirical approximation method of Maurey in the form used by Barron \cite{Barron1993}, and related formulations in terms of the Fourier ratio appear in \cite{AldalehEtAl2025}.

\begin{theorem}[Fourier-ratio upper bound]
\label{theorem:fr-approximation-upper-general}
Let $f:H\to{\mathbb C}$ be nonzero, let $0<\eta<1$, and let $k$ be an integer satisfying
$$
k>\frac{\operatorname{FR}(f)^2-1}{\eta^2}.
$$
Then there exists $P\in\mathcal T_k(H)$ such that $\|f-P\|_2<\eta\|f\|_2$.
\end{theorem}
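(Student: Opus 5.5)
We will use Maurey's empirical method. Write $f=M^{-\frac12}\sum_\gamma\widehat f(\gamma)\gamma$. Put $\|\widehat f\|_1=L$ and write $\widehat f(\gamma)=|\widehat f(\gamma)|e^{i\theta_\gamma}$. Then $f$ is the expectation of the random function
$$
Z=L\,M^{-\frac12}e^{i\theta_\Gamma}\Gamma,
$$
where $\Gamma$ is a random character drawn with probability $p(\gamma)=\frac{|\widehat f(\gamma)|}{L}$. Draw independent copies $\Gamma_1,\ldots,\Gamma_k$ and set
$$
P=\frac1k\sum_{j=1}^kZ_j.
$$
This uses at most $k$ distinct characters, so $P\in\mathcal T_k(H)$.

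Next we compute the mean squared error. By independence and $\mathbb E Z=f$,
$$
\mathbb E\|f-P\|_2^2=\frac1k\left(\mathbb E\|Z\|_2^2-\|f\|_2^2\right).
$$
Each character has $\|\gamma\|_2^2=M$ under counting measure, so $\|Z\|_2^2=L^2$ deterministically. Hence
$$
\mathbb E\|f-P\|_2^2=\frac{L^2-\|f\|_2^2}{k}=\frac{(\operatorname{FR}(f)^2-1)\|f\|_2^2}{k},
$$
where we used Plancherel, $\|f\|_2=\|\widehat f\|_2$. The hypothesis on $k$ makes this strictly less than $\eta^2\|f\|_2^2$. Since the expectation is strictly below $\eta^2\|f\|_2^2$, some realization satisfies $\|f-P\|_2^2<\eta^2\|f\|_2^2$, which proves the claim.

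The only points needing care are the normalization of $\|Z\|_2^2$ under the conventions of Section~\ref{section:finite-fourier}, and the variance identity $\mathbb E\|Z-f\|_2^2=\mathbb E\|Z\|_2^2-\|f\|_2^2$. We also note the degenerate case $\operatorname{FR}(f)=1$: there $\widehat f$ is supported on a single character, any $k\geq1$ works, and the hypothesis forces $k\geq1$. For $k=0$ the hypothesis would require $\operatorname{FR}(f)^2<1$, which is impossible.
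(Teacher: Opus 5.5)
Your proof is correct and is essentially the paper's argument. The paper runs the same Maurey sampling on the Fourier side, with $Y=L\frac{\widehat f(\Gamma)}{|\widehat f(\Gamma)|}1_{\{\Gamma\}}$, and then applies Fourier inversion; you sample the corresponding scaled characters directly in physical space, which is the same computation transported by Plancherel.
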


\begin{proof}
Write $L=\|\widehat f\|_1$, choose a random frequency $\Gamma$ with probability $\frac{|\widehat f(\gamma)|}{L}$ of equalling $\gamma$, and set
$$
Y=L\frac{\widehat f(\Gamma)}{|\widehat f(\Gamma)|}1_{\{\Gamma\}}.
$$
Then ${\mathbb E}Y=\widehat f$, $\|Y\|_2=L$, and ${\mathbb E}\|Y-\widehat f\|_2^2=L^2-\|f\|_2^2$. For independent copies $Y_1,\ldots,Y_k$, their average $Q$ has support of size at most $k$ and
$$
{\mathbb E}\|Q-\widehat f\|_2^2=\frac{L^2-\|f\|_2^2}{k}=\frac{\operatorname{FR}(f)^2-1}{k}\|f\|_2^2,
$$
which the hypothesis makes smaller than $\eta^2\|f\|_2^2$. Some realization attains an error no larger than this average, and Fourier inversion supplies the required $P$.
\end{proof}

\begin{corollary}[Two-sided approximation complexity]
\label{corollary:two-sided-general}
For every nonzero $f:H\to{\mathbb C}$ and every $0<\eta<1$,
\begin{equation}
\lceil(1-\eta^2)^2D_4(f)\rceil\leq\kappa_\eta(f)\leq\min\left\{|\operatorname{supp}(\widehat f)|,\left\lfloor\frac{\operatorname{FR}(f)^2-1}{\eta^2}\right\rfloor+1\right\}.
\label{equation:two-sided-kappa-general}
\end{equation}
\end{corollary}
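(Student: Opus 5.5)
The corollary combines Theorem~\ref{theorem:collision-approximation-general}, Theorem~\ref{theorem:fr-approximation-upper-general} and Proposition~\ref{proposition:exact-best-k-general}; the only care needed is at the strict versus non-strict inequalities.

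For the lower bound, let $k=\kappa_\eta(f)$, so that $\sigma_k(f)_2\leq\eta\|f\|_2$. Theorem~\ref{theorem:collision-approximation-general} is phrased with a strict inequality, so I will use the first estimate \eqref{equation:collision-approximation-lower} directly. It gives
$$
\eta^2\geq\frac{\sigma_k(f)_2^2}{\|f\|_2^2}\geq1-\sqrt{\frac{k}{D_4(f)}},
$$
which rearranges to $\sqrt{k/D_4(f)}\geq1-\eta^2>0$. Squaring yields $k\geq(1-\eta^2)^2D_4(f)$, and since $k$ is an integer, $k\geq\lceil(1-\eta^2)^2D_4(f)\rceil$.

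For the first upper bound, take $k=|\operatorname{supp}(\widehat f)|$. By Proposition~\ref{proposition:exact-best-k-general}, $\sigma_k(f)_2=0\leq\eta\|f\|_2$, so $\kappa_\eta(f)\leq|\operatorname{supp}(\widehat f)|$.

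For the second upper bound, the integer $k_0=\lfloor(\operatorname{FR}(f)^2-1)/\eta^2\rfloor+1$ strictly exceeds $(\operatorname{FR}(f)^2-1)/\eta^2$. Theorem~\ref{theorem:fr-approximation-upper-general} then supplies $P\in\mathcal T_{k_0}(H)$ with $\|f-P\|_2<\eta\|f\|_2$, so $\sigma_{k_0}(f)_2\leq\eta\|f\|_2$. If $k_0\leq M$, this gives $\kappa_\eta(f)\leq k_0$ directly. If $k_0>M$, the first upper bound already gives $\kappa_\eta(f)\leq|\operatorname{supp}(\widehat f)|\leq M<k_0$. In either case $\kappa_\eta(f)$ is at most the minimum of the two quantities.

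There is no real obstacle here. The only points to check are the ones above: the definition of $\kappa_\eta$ uses a non-strict inequality while Theorem~\ref{theorem:collision-approximation-general} assumes a strict one, and $\kappa_\eta$ is defined only over $k\leq M$.
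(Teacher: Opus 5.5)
Your proposal is correct and follows the paper's proof. The lower bound comes from the non-strict estimate \eqref{equation:collision-approximation-lower}, the support bound from retaining all nonzero coefficients, and the Fourier-ratio bound from Theorem~\ref{theorem:fr-approximation-upper-general} with $k=\lfloor(\operatorname{FR}(f)^2-1)/\eta^2\rfloor+1$; your explicit handling of the strict versus non-strict inequality and of the range $k\leq M$ in the definition of $\kappa_\eta$ fills in details the paper leaves implicit.
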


\begin{proof}
The collision lower bound forces $k\geq(1-\eta^2)^2D_4(f)$ whenever the relative error is at most $\eta$. For the upper bound apply Theorem~\ref{theorem:fr-approximation-upper-general} with $k=\lfloor\frac{\operatorname{FR}(f)^2-1}{\eta^2}\rfloor+1$, while retaining all nonzero coefficients accounts for the other upper bound.
\end{proof}

For adjacency functions, the energy identity turns these estimates into the following bounds.

\begin{theorem}[Two-sided graph approximation bounds]
\label{theorem:two-sided-graph-approximation}
Let $G$ have $N$ vertices and $s>0$ edges, and let $\pi$ be a labeling. Then, for every $0<\eta<1$,
\begin{equation}
\left\lceil(1-\eta^2)^2\frac{N^2(2s)^2}{E_{\mathrm{edge}}(G,\pi)}\right\rceil\leq\kappa_\eta(G,\pi)\leq\min\left\{|\operatorname{supp}(\widehat f_\pi)|,\left\lfloor\frac{\operatorname{FR}(G,\pi)^2-1}{\eta^2}\right\rfloor+1\right\}.
\label{equation:two-sided-graph-approximation}
\end{equation}
\end{theorem}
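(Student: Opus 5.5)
This is a direct specialization of Corollary~\ref{corollary:two-sided-general} to the group $H={\mathbb Z}_N^2$, with $M=N^2$ and $f=f_\pi=1_{A_\pi}$. The plan is to invoke that corollary verbatim and then rewrite its two sides in graph language. The function $f_\pi$ is nonzero because $s>0$, so $\|f_\pi\|_2^2=|A_\pi|=2s>0$ and the corollary applies.

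For the lower bound, the corollary gives
\[
\kappa_\eta(f_\pi)\geq\lceil(1-\eta^2)^2D_4(f_\pi)\rceil.
\]
We then substitute the collision dimension formula \eqref{equation:graph-collision-dimension} from Theorem~\ref{theorem:edge-energy-identity},
\[
D_4(G,\pi)=\frac{N^2(2s)^2}{E_{\mathrm{edge}}(G,\pi)},
\]
which is exactly the left side of \eqref{equation:two-sided-graph-approximation}, since $\kappa_\eta(G,\pi)=\kappa_\eta(f_\pi)$ by definition. For the upper bound, we use $\operatorname{FR}(G,\pi)=\operatorname{FR}(f_\pi)$ by definition, and $\operatorname{supp}(\widehat f_\pi)$ is the same set in both formulations. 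The right side therefore transfers without change.

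The only point that needs any attention is consistency of normalizations. The two-dimensional transform \eqref{equation:two-dimensional-transform} carries the factor $\frac{1}{N}=M^{-\frac{1}{2}}$, so it agrees with the general convention \eqref{equation:finite-fourier-transform} used in Section~\ref{section:approximation}. Consequently $\sigma_k$, $D_4$ and $\operatorname{FR}$ computed on ${\mathbb Z}_N^2$ coincide with the graph quantities. I expect no real obstacle: the argument is a bookkeeping check that Theorem~\ref{theorem:edge-energy-identity} and Corollary~\ref{corollary:two-sided-general} use the same $f$ and the same Fourier convention.
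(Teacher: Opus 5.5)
Your proposal is correct and follows the paper's proof, which likewise applies Corollary~\ref{corollary:two-sided-general} to $f_\pi$ on ${\mathbb Z}_N^2$ and substitutes the formula \eqref{equation:graph-collision-dimension} for $D_4(G,\pi)$. Your check that the normalization $\frac{1}{N}=M^{-\frac{1}{2}}$ with $M=N^2$ matches the general Fourier convention is the only bookkeeping needed, and you handled it correctly.
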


\begin{proof}
Apply Corollary \ref{corollary:two-sided-general} to $f_\pi$ and use the formula \eqref{equation:graph-collision-dimension}.
\end{proof}

\begin{remark}
Theorem \ref{theorem:two-sided-graph-approximation} bounds the number of Fourier frequencies needed to represent an approximation. A lower bound for the number of observed adjacency entries would require a separate argument about the observation model.
\end{remark}

Let us apply these bounds to the Singer Levi graphs from \cite{GuptaIosevichEtAl2026}. These graphs have sharp labelings, so their adjacency spectra determine both their additive energy and their best Fourier approximation errors.

\begin{corollary}[Singer graphs have linear minimum Fourier model size]
\label{corollary:singer-linear-complexity}
Let $G_q$ be the Singer Levi graph constructed in \cite{GuptaIosevichEtAl2026}, where $q\geq2$ is a prime power. Its two vertex classes are the points and the lines of the projective plane of order $q$, a point being adjacent to a line when it lies on that line, and the construction in \cite{GuptaIosevichEtAl2026} uses a Singer difference set. Put $v=q^2+q+1$ and $N_q=2v$. The graph is $(q+1)$-regular, and its adjacency spectrum consists of $q+1$ and $-(q+1)$ with multiplicity one each and of $\sqrt q$ and $-\sqrt q$ with multiplicity $v-1$ each. Let $\pi_q$ denote the sharp labeling constructed there. Then
\begin{equation}
D_4(G_q,\pi_q)=\frac{4v^2(q+1)^2}{2(q+1)^4+2(v-1)q^2}\asymp q^2\asymp N_q
\label{equation:singer-d4}
\end{equation}
and
\begin{equation}
\operatorname{FR}(G_q,\pi_q)^2=\frac{(2(q+1)+2(v-1)\sqrt q)^2}{2v(q+1)}\asymp q^2\asymp N_q,
\label{equation:singer-fr}
\end{equation}
and consequently, for every fixed $0<\eta<1$,
$$
\min_\pi\kappa_\eta(G_q,\pi)=\kappa_\eta(G_q,\pi_q)\asymp_\eta N_q.
$$
\end{corollary}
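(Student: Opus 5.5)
Since $\pi_q$ is sharp, I would take everything from the spectrum. Theorem~\ref{theorem:extremizer-exact-energy} gives $D_4(G_q,\pi_q)=(2s)^2/\operatorname{tr}(M^4)$. Here $2s=N_q(q+1)=2v(q+1)$, and the stated spectrum gives
$$
\operatorname{tr}(M^4)=2(q+1)^4+2(v-1)q^2 .
$$
Substituting yields \eqref{equation:singer-d4}.

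For the Fourier ratio I would use that sharpness means equality in Theorem~\ref{theorem:prior-graph-energy-bound}, so $\operatorname{FR}(G_q,\pi_q)^2=\mathcal E(G_q)^2/(2s)$. The energy is
$$
\mathcal E(G_q)=2(q+1)+2(v-1)\sqrt q ,
$$
and dividing its square by $2s=2v(q+1)$ gives \eqref{equation:singer-fr}.

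The asymptotics are routine since $v\asymp q^2$. In the $D_4$ formula the numerator is $\asymp q^6$, while the denominator is $2(q+1)^4+2(v-1)q^2\asymp q^4$, so $D_4\asymp q^2$. In the $\operatorname{FR}^2$ formula, $\mathcal E\asymp q^{5/2}$, so $\mathcal E^2\asymp q^5$; dividing by $2s\asymp q^3$ gives $q^2$. Finally $N_q=2v\asymp q^2$. All the constants are absolute for $q\ge2$; I would check small $q$ only loosely, since both expressions are positive and continuous in $q$.

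For $\kappa_\eta$, Theorem~\ref{theorem:spectral-best-k} says the sharp labeling minimizes $\kappa_\eta$ over all labelings, which gives the first equality. Corollary~\ref{corollary:two-sided-general} then bounds $\kappa_\eta(G_q,\pi_q)$ above and below:
$$
(1-\eta^2)^2D_4\le\kappa_\eta(G_q,\pi_q)\le\frac{\operatorname{FR}^2-1}{\eta^2}+1 .
$$
Both sides are $\asymp_\eta q^2\asymp N_q$. There is no genuine obstacle beyond this bookkeeping. The only point needing care is that the eigenvalue multiplicities sum to $N_q$, namely $2+2(v-1)=2v$, so that the trace and energy sums are complete.
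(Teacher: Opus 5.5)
Your proposal is correct and follows the paper's proof essentially step by step: $D_4$ from Theorem~\ref{theorem:extremizer-exact-energy} with $2s=2v(q+1)$ and $\operatorname{tr}(M^4)=2(q+1)^4+2(v-1)q^2$, $\operatorname{FR}^2=\mathcal E(G_q)^2/(2s)$ from sharpness, the two-sided approximation bounds for $\kappa_\eta$, and Theorem~\ref{theorem:spectral-best-k} for the minimization over labelings. The only soft spot is your small-$q$ remark; explicit bounds such as $q^2\leq v\leq\frac{7}{4}q^2$ for $q\geq2$ give the uniform constants more cleanly than an appeal to continuity.
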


\begin{proof}
The spectrum and the existence of the sharp labeling are proved in \cite[Theorem 2.25 and Proposition 2.26]{GuptaIosevichEtAl2026}. Since $2s=N_q(q+1)$, Theorem \ref{theorem:extremizer-exact-energy} yields
$$
D_4(G_q,\pi_q)=\frac{N_q^2(q+1)^2}{2(q+1)^4+2(v-1)q^2},
$$
which is \eqref{equation:singer-d4}, and substituting the eigenvalues into the graph-energy formula proves \eqref{equation:singer-fr}. Both quantities are comparable to $q^2$, so the estimate for $\kappa_\eta(G_q,\pi_q)$ follows from Theorem~\ref{theorem:two-sided-graph-approximation}, and the minimization over labelings follows from Theorem~\ref{theorem:spectral-best-k}.
\end{proof}

\begin{remark}[A nonabelian Cayley realization]
The Singer Levi graphs are not abelian Cayley graphs \cite{GuptaIosevichEtAl2026}, but they do admit a Cayley realization on a nonabelian group. Write the Singer Levi graph as $P_i\sim L_j$ if and only if $i+j\in D$, where $D\subset{\mathbb Z}_v$ is the Singer difference set used in \cite{GuptaIosevichEtAl2026}. The maps $r$ and $t$ defined by
$$
r(P_i)=P_{i+1},r(L_j)=L_{j-1},t(P_i)=L_i,t(L_i)=P_i
$$
preserve adjacency and satisfy $r^v=t^2=1$ and $trt=r^{-1}$. The resulting dihedral group acts transitively and freely on the $2v$ vertices, since the images of $P_0$ under $r^i$ and $tr^i$ run through the distinct vertices $P_i$ and $L_i$, and identifying vertices with these group elements therefore exhibits a Cayley realization on a nonabelian group.
\end{remark}

\begin{remark}[Exact approximation in the Singer labeling]
We can also compute the error exactly. The squared absolute Fourier coefficients are $(q+1)^2$ twice and $q$ exactly $2(v-1)$ times, and therefore
$$
\sigma_k(f_{\pi_q})_2^2=
\begin{cases}
(2-k)(q+1)^2+2(v-1)q,&0\leq k\leq2,\\
(2v-k)q,&2\leq k\leq2v,\\
0,&k\geq2v,
\end{cases}
$$
where the two expressions agree at $k=2$. In particular, for each fixed $0<\eta<1$ and all sufficiently large $q$,
$$
\kappa_\eta(f_{\pi_q})=\left\lceil2v-\eta^2\frac{2v(q+1)}{q}\right\rceil
\text{ and }
\frac{\kappa_\eta(f_{\pi_q})}{N_q}\longrightarrow1-\eta^2,
$$
which follows directly by retaining the largest coefficients.
\end{remark}

\section{Centered adjacency and nonconstant structure}
\label{section:centered}

For a dense graph a good approximation may tell us little more than its density, since the zero Fourier coefficient can account for most of the second and fourth moments. To see what remains, we subtract the mean, and the following calculation applies to any subset of a finite abelian group.

\begin{proposition}[Centered energy identity]
\label{proposition:centered-energy-general}
Let $A$ be a nonempty proper subset of a finite abelian group $H$ of order $M$, and set $\alpha=\frac{|A|}{M}$ and $g_A=1_A-\alpha$. Then
\begin{equation}
\|g_A\|_2^2=|A|(1-\alpha)
\label{equation:centered-l2-general}
\end{equation}
and
\begin{equation}
\mathcal E_+(g_A)=E_+(A)-\frac{|A|^4}{M},
\label{equation:centered-energy-general}
\end{equation}
and consequently
\begin{equation}
D_4(g_A)=\frac{M|A|^2(1-\alpha)^2}{E_+(A)-\frac{|A|^4}{M}}.
\label{equation:centered-d4-general}
\end{equation}
\end{proposition}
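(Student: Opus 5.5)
The plan is to work on the Fourier side, where subtracting the constant $\alpha$ changes only the coefficient at the trivial character. Then each of the three formulas follows from Plancherel and the energy identity of Proposition~\ref{proposition:general-energy-identity}.

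For \eqref{equation:centered-l2-general}, I expand directly:
$$
\|g_A\|_2^2=\sum_x(1_A(x)-\alpha)^2=|A|-2\alpha|A|+\alpha^2M=|A|-\alpha|A|.
$$
This is the claimed value $|A|(1-\alpha)$.

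For \eqref{equation:centered-energy-general}, I use the convention \eqref{equation:finite-fourier-transform}. At the trivial character $\gamma_0$ we have $\widehat{1_A}(\gamma_0)=M^{-\frac12}|A|$. The constant $\alpha$ has transform $\alpha\sqrt M=M^{-\frac12}|A|$ at $\gamma_0$ and zero elsewhere, by character orthogonality. Hence $\widehat{g_A}$ agrees with $\widehat{1_A}$ at every nontrivial character and vanishes at $\gamma_0$. By Proposition~\ref{proposition:general-energy-identity},
$$
\mathcal E_+(g_A)=M\sum_{\gamma\neq\gamma_0}|\widehat{1_A}(\gamma)|^4=E_+(A)-M\left(M^{-\frac12}|A|\right)^4=E_+(A)-\frac{|A|^4}{M}.
$$

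Formula \eqref{equation:centered-d4-general} then follows by substituting the two previous identities into $D_4(g_A)=M\|g_A\|_2^4/\mathcal E_+(g_A)$, the last assertion of Proposition~\ref{proposition:general-energy-identity}.

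This substitution requires $g_A\neq0$ and a nonzero denominator. I expect this to be the only real obstacle, and I would verify it as follows. First, $g_A\neq0$ because $A$ is nonempty and proper, so $0<\alpha<1$ and $\|g_A\|_2^2>0$. Second, by Plancherel some coefficient $\widehat{g_A}(\gamma)$ with $\gamma\neq\gamma_0$ is nonzero, so $\mathcal E_+(g_A)=M\sum_\gamma|\widehat{g_A}(\gamma)|^4>0$. The denominator $E_+(A)-\frac{|A|^4}{M}$ is therefore positive, and the quotient is well defined.
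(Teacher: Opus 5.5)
Your proposal is correct and follows essentially the same route as the paper. Both compute the norm directly, note that centering removes only the trivial Fourier coefficient $|A|/\sqrt M$ (so the energy drops by $M(|A|/\sqrt M)^4=|A|^4/M$), and use $g_A\neq0$ to ensure the centered energy is positive before substituting into the energy identity.
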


\begin{proof}
The centered function has squared norm $|A|(1-\alpha)^2+(M-|A|)\alpha^2=|A|(1-\alpha)$. Its transform agrees with $\widehat{1_A}$ away from the trivial character, where it vanishes, so centering removes exactly $M(\frac{|A|}{\sqrt M})^4=\frac{|A|^4}{M}$ from the additive energy. Since $A$ is nonempty and proper, $g_A\ne0$ and its weighted energy is positive, and the formula for $D_4$ follows from the energy identity.
\end{proof}

For a simple graph with at least one edge, the directed edge set is always a proper subset of ${\mathbb Z}_N^2$, because it contains no diagonal points, so centering is available for every such graph.

\begin{definition}
Let $G$ have $N$ vertices and $s>0$ edges, and let $\pi$ be a labeling. Define the edge density $\alpha_G=\frac{2s}{N^2}$ and the centered adjacency function $g_\pi=f_\pi-\alpha_G$, and define the centered Fourier ratio and the centered collision dimension by
$$
\operatorname{FR}^{\circ}(G,\pi)=\operatorname{FR}(g_\pi)
\text{ and }
D_4^{\circ}(G,\pi)=D_4(g_\pi).
$$
\end{definition}

\begin{corollary}[Centered graph formula]
\label{corollary:centered-graph-formula}
Let $G$ have $N$ vertices and $s>0$ edges. Then
\begin{equation}
D_4^{\circ}(G,\pi)=\frac{N^2(2s)^2(1-\frac{2s}{N^2})^2}{E_{\mathrm{edge}}(G,\pi)-\frac{(2s)^4}{N^2}}
\label{equation:centered-graph-d4}
\end{equation}
and $D_4^{\circ}(G,\pi)\leq\operatorname{FR}^{\circ}(G,\pi)^2$. If $0<\eta<1$ and a trigonometric polynomial $P$ with at most $k$ frequencies satisfies $\|g_\pi-P\|_2<\eta\|g_\pi\|_2$, then $k>(1-\eta^2)^2D_4^{\circ}(G,\pi)$.
\end{corollary}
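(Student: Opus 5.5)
This corollary is a direct specialization of Proposition~\ref{proposition:centered-energy-general} to the directed edge set, so the plan is to apply that proposition with $H={\mathbb Z}_N^2$, $M=N^2$ and $A=A_\pi$. The first step is to check the hypotheses. $A_\pi$ is nonempty because $s>0$, and it is proper because it contains no diagonal pairs $(x,x)$, as noted after the proposition. With $|A_\pi|=2s$ we get $\alpha=\frac{2s}{N^2}=\alpha_G$, so the centered function $g_A$ of the proposition is exactly $g_\pi=f_\pi-\alpha_G$. Substituting $|A|=2s$, $M=N^2$ and $E_+(A_\pi)=E_{\mathrm{edge}}(G,\pi)$ into \eqref{equation:centered-d4-general} then gives \eqref{equation:centered-graph-d4}, since $\frac{|A|^4}{M}=\frac{(2s)^4}{N^2}$ and $M|A|^2(1-\alpha)^2=N^2(2s)^2(1-\frac{2s}{N^2})^2$.

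For the inequality $D_4^{\circ}(G,\pi)\leq\operatorname{FR}^{\circ}(G,\pi)^2$, I will invoke Theorem~\ref{theorem:collision-hierarchy-general} for the nonzero function $g_\pi$ on ${\mathbb Z}_N^2$. Here $g_\pi\neq0$ because $A_\pi$ is a nonempty proper subset, so by \eqref{equation:centered-l2-general} we have $\|g_\pi\|_2^2=2s(1-\alpha_G)>0$.

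For the approximation statement, I will apply Theorem~\ref{theorem:collision-approximation-general}, specifically \eqref{equation:necessary-k-general}, to $f=g_\pi$. If $P\in\mathcal T_k({\mathbb Z}_N^2)$ satisfies $\|g_\pi-P\|_2<\eta\|g_\pi\|_2$, that theorem gives $k>(1-\eta^2)^2D_4(g_\pi)$ directly.

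No step is a real obstacle. The only point that needs care is that the denominator in \eqref{equation:centered-graph-d4} is positive, so that the formula makes sense. This follows because $\mathcal E_+(g_\pi)=N^2\sum|\widehat g_\pi|^4>0$ for $g_\pi\ne0$, which the proposition already records.
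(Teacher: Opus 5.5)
Your proposal is correct and follows the paper's proof exactly: specialize Proposition~\ref{proposition:centered-energy-general} to $H={\mathbb Z}_N^2$ with $A=A_\pi$ and $|A_\pi|=2s$, then apply the collision hierarchy and Theorem~\ref{theorem:collision-approximation-general} to $g_\pi$. Your checks that $A_\pi$ is a nonempty proper subset and that the denominator is positive are the same points the paper records just before and inside that proposition.
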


\begin{proof}
Apply Proposition~\ref{proposition:centered-energy-general} with $H={\mathbb Z}_N^2$ and $|A_\pi|=2s$. The collision hierarchy and Theorem~\ref{theorem:collision-approximation-general} give the remaining claims.
\end{proof}

For a sharp labeling, subtracting the mean removes one adjacency eigenvalue from the formulas. We write $J$ for the $N\times N$ matrix all of whose entries are one, and as before we identify a matrix indexed by ${\mathbb Z}_N^2$ with the function given by its entries.

\begin{theorem}[Centered formulas for sharp labelings]
\label{theorem:centered-extremizer-formulas}
Let $\pi$ be a sharp labeling of $G$, which is regular by Theorem \ref{theorem:prior-extremizer-regularity}. Write $d>0$ for its degree and $A$ for the corresponding adjacency matrix. Since $G$ is $d$-regular, $2s=Nd$ and the density $\frac{2s}{N^2}$ equals $\frac{d}{N}$, so the centered adjacency function defined earlier is
$$
g_\pi=A-\frac{d}{N}J,
$$
regarded as a function of the two matrix indices. Then
\begin{equation}
\mathcal E_+(g_\pi)=N^2(\operatorname{tr}(A^4)-d^4),
\label{equation:centered-extremizer-energy}
\end{equation}
\begin{equation}
D_4^{\circ}(G,\pi)=\frac{(Nd-d^2)^2}{\operatorname{tr}(A^4)-d^4},
\label{equation:centered-extremizer-d4}
\end{equation}
and
\begin{equation}
\operatorname{FR}^{\circ}(G,\pi)=\frac{\mathcal E(G)-d}{\sqrt{Nd-d^2}}.
\label{equation:centered-extremizer-fr}
\end{equation}
The absolute values of the nonzero Fourier coefficients of $g_\pi$ are the nonzero adjacency singular values with one copy of $d$ removed.
\end{theorem}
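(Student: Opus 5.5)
The argument works entirely on the Fourier side. Centering removes only the zero Fourier coefficient, so everything follows from Theorem~\ref{theorem:extremizer-exact-energy} and Proposition~\ref{proposition:centered-energy-general}.

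The first step is to compute $\widehat{g_\pi}$. The Fourier transform of the constant function $\frac{d}{N}$ on ${\mathbb Z}_N^2$ is supported at $(0,0)$, with value $\frac{1}{N}\cdot N^2\cdot\frac{d}{N}=d$. Meanwhile $\widehat{f_\pi}(0,0)=\frac{2s}{N}=d$. Hence $\widehat{g_\pi}$ agrees with $\widehat{f_\pi}=UAU$ off $(0,0)$ and vanishes at $(0,0)$.

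For the sharp labeling, the equality criterion (Theorem~\ref{theorem:prior-equality-criterion}) says $\widehat{f_\pi}$ is a weighted partial permutation matrix. Its occupied entry in row $0$ and column $0$ is the $(0,0)$ entry $d$, exactly as in the proof of Theorem~\ref{theorem:prior-extremizer-regularity}. By Theorem~\ref{theorem:extremizer-exact-energy}, the nonzero moduli of $\widehat{f_\pi}$ are the nonzero $|\lambda_j|$ with multiplicity. Deleting the entry $d$ therefore leaves the nonzero singular values with one copy of $d$ removed. This is the final assertion of the theorem, and it is the one point needing care: the entry $d$ at $(0,0)$ is a single entry, and it corresponds to one singular value $d$. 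Multiplicities are then handled by the multiset statement of Theorem~\ref{theorem:extremizer-exact-energy}, so no further argument about repeated eigenvalues $\pm d$ is needed.

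Next come the energy and $D_4$ formulas. By the energy identity (Proposition~\ref{proposition:general-energy-identity}) with $M=N^2$,
$$\mathcal E_+(g_\pi)=N^2\Bigl(\sum_{m,n}|\widehat{f_\pi}(m,n)|^4-d^4\Bigr)=N^2(\operatorname{tr}(A^4)-d^4),$$
using \eqref{equation:extremizer-edge-energy}. Equivalently, Proposition~\ref{proposition:centered-energy-general} gives $E_{\mathrm{edge}}-\frac{(Nd)^4}{N^2}$, which is the same quantity.

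For the centered $L^2$ norm, \eqref{equation:centered-l2-general} gives $\|g_\pi\|_2^2=Nd(1-\frac{d}{N})=Nd-d^2$. Dividing $N^2$ times the fourth power of this norm by the energy, as in the energy identity, yields
$$D_4^{\circ}(G,\pi)=\frac{(Nd-d^2)^2}{\operatorname{tr}(A^4)-d^4}.$$

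Finally, for the Fourier ratio, $\|\widehat{g_\pi}\|_1=\|\widehat{f_\pi}\|_1-d$. By sharpness $\|\widehat{f_\pi}\|_1=\mathcal E(G)$, so $\|\widehat{g_\pi}\|_1=\mathcal E(G)-d$. Dividing by $\|g_\pi\|_2=\sqrt{Nd-d^2}$ gives \eqref{equation:centered-extremizer-fr}.

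The theorem's hypothesis $d>0$ makes $g_\pi$ nonzero, since $d<N$ for a simple graph. This keeps $\|g_\pi\|_2^2=Nd-d^2$ positive, so the formula for $\operatorname{FR}^{\circ}$ is well defined. The denominator $\operatorname{tr}(A^4)-d^4$ is positive because $\operatorname{tr}(A)=0$ forces some other nonzero eigenvalue, so the formula for $D_4^{\circ}$ is also well defined.
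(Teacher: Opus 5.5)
Your proof is correct and follows the paper's route. Centering deletes exactly the coefficient $\widehat{f_\pi}(0,0)=d$, Theorem~\ref{theorem:extremizer-exact-energy} then supplies the $\ell^1$, $\ell^2$ and $\ell^4$ norms of $\widehat{g_\pi}$, and the energy identity turns these into all three formulas; your checks that $Nd-d^2>0$ and $\operatorname{tr}(A^4)-d^4>0$ are details the paper leaves implicit.
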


\begin{proof}
Regularity means that $\widehat A(0,0)=d$, and centering removes exactly that coefficient. By Theorem~\ref{theorem:extremizer-exact-energy} the other nonzero coefficient moduli are the nonzero singular values with one copy of $d$ removed, hence
$$
\|\widehat g_\pi\|_1=\mathcal E(G)-d,\|\widehat g_\pi\|_2^2=Nd-d^2\text{ and }\|\widehat g_\pi\|_4^4=\operatorname{tr}(A^4)-d^4,
$$
and the definitions together with the energy identity prove the assertions.
\end{proof}

Let us see what these formulas say for two familiar dense graphs.

\begin{example}[The complete graph]
\label{example:complete-graph-centered}
Let $G=K_N$ with $N\geq2$, and use any labeling. Its adjacency function is $f(x,y)=1-1_{\{x=y\}}$, and a direct calculation shows that
\begin{equation}
\widehat f(m,n)=N1_{\{(m,n)=(0,0)\}}-1_{\{m+n=0\}}.
\label{equation:complete-graph-fourier}
\end{equation}
Indeed, the constant function one has Fourier transform $N$ at $(0,0)$ and zero elsewhere, while
$$
\frac{1}{N}\sum_{x\in{\mathbb Z}_N}e^{\frac{-2\pi i(m+n)x}{N}}=1_{\{m+n=0\}}.
$$
Thus $\widehat f(0,0)=N-1$, and at each of the other $N-1$ points on the line $m+n=0$ the Fourier coefficient is $-1$. Therefore $\|\widehat f\|_1=2(N-1)$ and $\|\widehat f\|_2^2=(N-1)^2+(N-1)=N(N-1)$, so that
$$
\operatorname{FR}(f)^2=4\left(1-\frac{1}{N}\right),
$$
and moreover $\|\widehat f\|_4^4=(N-1)^4+(N-1)$, so that
$$
D_4(f)=\frac{N^2(N-1)^2}{(N-1)^4+(N-1)}=\frac{N(N-1)}{N^2-3N+3}\longrightarrow1
$$
as $N\longrightarrow\infty$.

Why is the uncentered collision dimension so small? The function $f$ is close to its mean. The edge density is $\alpha=1-\frac{1}{N}$, the orthogonal projection of $f$ onto the constant functions is the constant function $\alpha$, and by Plancherel, or by direct calculation, $\|f-\alpha\|_2^2=N-1$ while $\|f\|_2^2=N(N-1)$, so the best constant approximation has relative error
$$
\frac{\|f-\alpha\|_2}{\|f\|_2}=\frac{1}{\sqrt N}.
$$
The centered adjacency function is
$$
g(x,y)=f(x,y)-\left(1-\frac{1}{N}\right)=\frac{1}{N}-1_{\{x=y\}}.
$$
Its zero Fourier coefficient vanishes, at every nonzero point on the line $m+n=0$ its Fourier coefficient is $-1$, and all other coefficients vanish, so there are exactly $N-1$ nonzero coefficients, all of modulus one. Therefore
$$
D_4(g)=\operatorname{FR}(g)^2=N-1
\text{ and }
\kappa_\eta(g)=\lceil(1-\eta^2)(N-1)\rceil,
$$
and approximating the centered adjacency function to any fixed relative error less than one requires a number of Fourier frequencies proportional to $N$, whereas the uncentered function has a constant approximation whose relative error tends to zero.
\end{example}

\begin{example}[Parity-labeled complete bipartite graphs]
\label{example:complete-bipartite-centered}
Suppose that $N$ is even. Partition ${\mathbb Z}_N$ into the even and odd residue classes and consider the complete bipartite graph $K_{\frac{N}{2},\frac{N}{2}}$ with this labeling. Its adjacency function is
$$
f(x,y)=1_{\{x-y\text{ is odd}\}}=\frac{1-(-1)^{x-y}}{2},
$$
whose Fourier transform is supported at two points, with $\widehat f(0,0)=\frac{N}{2}$ and $\widehat f(\frac{N}{2},\frac{N}{2})=-\frac{N}{2}$, so that $D_4(f)=\operatorname{FR}(f)^2=2$. The edge density is $\frac{1}{2}$, so the centered adjacency function is
$$
g(x,y)=f(x,y)-\frac{1}{2}=-\frac{1}{2}(-1)^{x-y},
$$
a scalar multiple of a single character on ${\mathbb Z}_N^2$, and consequently $D_4(g)=\operatorname{FR}(g)^2=1$. Thus the nonconstant part has one Fourier frequency.
\end{example}

\begin{remark}
These examples explain why subtracting the mean matters. For the complete graph, most of the $L^2$ mass lies in the constant term while the nonconstant part has $N-1$ Fourier frequencies of equal size, whereas for the parity-labeled complete bipartite graph the entire nonconstant part is a scalar multiple of one character.
\end{remark}

\section{Additive structure, separation, and random labelings}
\label{section:inverse}

A small Fourier ratio forces large additive energy, and the Balog--Szemer\'edi--Gowers theorem then finds a large subset of the directed edge set whose difference set is small. For a finite subset $B$ of an abelian group, its difference set is
$$
B-B=\{b-b':b,b'\in B\},
$$
and we measure the additive structure of $B$ by comparing $|B-B|$ with $|B|$. The quantitative form of the Balog--Szemer\'edi--Gowers theorem that we use is due to Reiher and Schoen \cite{ReiherSchoen2024}, and it is recorded in Appendix \ref{appendix:bsg}.

\begin{theorem}[Low Fourier ratio forces structured edges]
\label{theorem:graph-bsg}
Let $G$ be a simple graph with $N$ vertices and $s>0$ edges, let $\pi$ be a labeling, and suppose that $\operatorname{FR}(G,\pi)\leq r$. Set $\alpha=\frac{2s}{N^2}$ and $K=\alpha r^2$. Then $K\geq1$ and
\begin{equation}
E_{\mathrm{edge}}(G,\pi)\geq\frac{(2s)^3}{K},
\label{equation:graph-large-energy-from-fr}
\end{equation}
and for every $0<\varepsilon<\frac{1}{2}$ there exists a subset $A'\subset A_\pi$ such that
\begin{equation}
|A'|\geq(1-\varepsilon)K^{-\frac{1}{2}}(2s)
\label{equation:graph-bsg-size}
\end{equation}
and
\begin{equation}
|A'-A'|\leq2^{33}\varepsilon^{-9}K^4|A'|.
\label{equation:graph-bsg-difference}
\end{equation}
\end{theorem}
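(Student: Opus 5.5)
The plan is to obtain the energy bound from the edge-energy obstruction, and then feed it into the Reiher--Schoen form of the Balog--Szemer\'edi--Gowers theorem recorded in Appendix~\ref{appendix:bsg}.

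First, $K\geq1$. By Corollary~\ref{corollary:graph-energy-obstruction-fr}, specifically \eqref{equation:graph-uncertainty-lower}, we have $r^2\geq\operatorname{FR}(G,\pi)^2\geq\frac{N^2}{2s}$. Hence $K=\alpha r^2=\frac{2s}{N^2}r^2\geq1$.

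Next, the energy bound. Inequality \eqref{equation:edge-energy-fr-lower} gives $r\geq\frac{2Ns}{\sqrt{E_{\mathrm{edge}}(G,\pi)}}$, which rearranges to
$$
E_{\mathrm{edge}}(G,\pi)\geq\frac{N^2(2s)^2}{r^2}.
$$
Substituting $r^2=\frac{KN^2}{2s}$ gives $\frac{(2s)^3}{K}$, which is \eqref{equation:graph-large-energy-from-fr}.

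Finally, the structured subset. I apply the Reiher--Schoen theorem to the set $A=A_\pi\subset{\mathbb Z}_N^2$, which has $|A|=2s$ and $E_+(A)\geq|A|^3/K$. I expect the appendix version to read as follows. If $E_+(A)\geq|A|^3/K$ and $0<\varepsilon<\frac12$, then there is $A'\subset A$ with $|A'|\geq(1-\varepsilon)K^{-1/2}|A|$ and $|A'-A'|\leq2^{33}\varepsilon^{-9}K^4|A'|$. Reading off these two conclusions gives \eqref{equation:graph-bsg-size} and \eqref{equation:graph-bsg-difference} directly.

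The main obstacle is matching the normalization of the appendix statement. Its energy hypothesis might be phrased with a different parameter, for instance $E_+(A)\geq|A|^3/K$ versus $E_+(A)=|A|^3/K$ exactly. If it requires equality, I would apply it with $K'=|A|^3/E_+(A)\leq K$. The conclusions only get better as the parameter decreases, since $K'^{-1/2}\geq K^{-1/2}$ and $K'^4\leq K^4$, so monotonicity gives the stated bounds. The ambient group ${\mathbb Z}_N^2$ is abelian, so no further hypotheses are needed.
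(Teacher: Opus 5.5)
Your proposal is correct and follows the paper's proof essentially verbatim: indicator uncertainty gives $K\geq1$, the collision hierarchy together with the edge-energy identity gives $E_{\mathrm{edge}}(G,\pi)\geq N^2(2s)^2/r^2=(2s)^3/K$, and Theorem~\ref{theorem:bsg} is then applied to $A_\pi$ with this $K$. The appendix states the hypothesis as the inequality $E_+(A)\geq|A|^3/K$ with $K\geq1$, so your monotonicity fallback is not needed.
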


\begin{proof}
Indicator uncertainty yields $r^2\geq\frac{1}{\alpha}$, hence $K\geq1$. The collision hierarchy and the edge-energy identity imply
$$
E_{\mathrm{edge}}(G,\pi)\geq\frac{N^2(2s)^2}{r^2}=\frac{(2s)^3}{\alpha r^2},
$$
and it remains to apply Theorem~\ref{theorem:bsg} to $A_\pi$ with $K=\alpha r^2$.
\end{proof}

\begin{remark}
The subset $A'$ is a set of oriented edges in ${\mathbb Z}_N^2$, and the conclusion concerns its difference set in that group. Passing from this conclusion to structure on vertex subsets or induced subgraphs requires further information.
\end{remark}

The conclusion is useful when the Fourier ratio is close to the uncertainty lower bound. Suppose, for example, that the graph has $s\asymp N$ edges and that a labeling satisfies $\operatorname{FR}(G,\pi)^2\asymp N$. Then $\alpha=\frac{2s}{N^2}\asymp\frac{1}{N}$, so that $K=\alpha\operatorname{FR}(G,\pi)^2\asymp1$, and Theorem \ref{theorem:graph-bsg} supplies a subset containing a positive proportion of the oriented edges and having a difference set of comparable size.

\subsection{Why large additive energy does not give a converse}
\label{subsection:failure-converse}

Can we reverse the preceding implication? A large fourth moment need not control the first moment, since many small Fourier coefficients can have a large sum, and density makes this easy to see. Indeed, if $A\subset H$ has density $\alpha=\frac{|A|}{|H|}$, then the trivial character alone accounts for
\begin{equation}
E_+(A)\geq\frac{|A|^4}{|H|}=\alpha|A|^3,
\label{equation:density-energy-floor}
\end{equation}
so every set of fixed positive density already has additive energy comparable to the maximal scale $|A|^3$. We still need a graph whose Fourier ratio is large, and the Paley graphs provide an explicit example.

\begin{theorem}[A dense family with bounded collision dimension]
\label{theorem:paley-counterexample}
Let $p\equiv1\pmod4$ be prime, and let $P_p$ be the graph on ${\mathbb Z}_p$ in which $x$ and $y$ are adjacent when $y-x$ is a nonzero square. Under the natural labeling, let $A_p$ be its directed edge set and $f_p=1_{A_p}$. Then
\begin{equation}
\operatorname{FR}_{\min}(P_p)=\operatorname{FR}(f_p)=\sqrt{\frac{p-1}{2p}}(1+\sqrt p),
\label{equation:paley-fr}
\end{equation}
\begin{equation}
D_4(f_p)=\frac{4p(p-1)}{p^2-2p+9}\longrightarrow4,
\label{equation:paley-collision}
\end{equation}
\begin{equation}
\frac{E_+(A_p)}{|A_p|^3}=\frac{1}{2}+\frac{4}{(p-1)^2}\longrightarrow\frac{1}{2},
\label{equation:paley-normalized-energy}
\end{equation}
and for every fixed $0<\eta<1$,
\begin{equation}
\frac{\kappa_\eta(f_p)}{p}\longrightarrow\max\{0,1-2\eta^2\}.
\label{equation:paley-approximation}
\end{equation}
In particular, bounded collision dimension does not imply bounded Fourier approximation complexity when the required squared relative error is less than $\frac{1}{2}$.
\end{theorem}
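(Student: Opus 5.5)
The natural labeling is circulant, so the matrix $U$ diagonalizes it and the labeling is sharp. The plan is to read off every quantity from the Paley spectrum using the formulas for sharp labelings already established. The Paley graph $P_p$ is $d$-regular with $d=\frac{p-1}{2}$, and its eigenvalues are $d$ (simple) and $\frac{-1\pm\sqrt p}{2}$, each with multiplicity $\frac{p-1}{2}$. These are Gauss sum values of $\sum_{s\in Q}\zeta^{ks}$, where $Q$ is the set of nonzero squares. Since $M\chi_k=c_k\chi_k$ with $c_k$ these eigenvalues, $B$ is diagonal. Theorem~\ref{theorem:prior-equality-criterion} then gives sharpness, and the minimum in \eqref{equation:paley-fr} equals $\mathcal E(P_p)/\sqrt{2s}$ by Theorem~\ref{theorem:prior-graph-energy-bound}.

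\textbf{Step 1: the Fourier ratio.} Compute the graph energy:
$$
\mathcal E=\frac{p-1}{2}+\frac{p-1}{2}\cdot\frac{\sqrt p-1}{2}+\frac{p-1}{2}\cdot\frac{\sqrt p+1}{2}=\frac{p-1}{2}(1+\sqrt p).
$$
With $2s=\frac{p(p-1)}{2}$, dividing gives $\mathcal E/\sqrt{2s}=\sqrt{\frac{p-1}{2p}}(1+\sqrt p)$, which is \eqref{equation:paley-fr}.

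\textbf{Step 2: the collision dimension and energy.} Theorem~\ref{theorem:extremizer-exact-energy} gives $D_4=(2s)^2/\operatorname{tr}(M^4)$. Using $(\frac{\sqrt p-1}{2})^4+(\frac{\sqrt p+1}{2})^4=\frac{p^2+6p+1}{8}$, we get
$$
\operatorname{tr}(M^4)=d^4+d\cdot\frac{p^2+6p+1}{8}=\frac{(p-1)(p^3-2p^2+9p)}{16}
$$
after factoring out $\frac{p-1}{16}$; this simplification is routine but must be checked carefully. Dividing $\frac{p^2(p-1)^2}{4}$ by this yields $\frac{4p(p-1)}{p^2-2p+9}$. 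The normalized energy then follows from \eqref{equation:graph-collision-dimension}:
$$
\frac{E_+}{(2s)^3}=\frac{p^2}{2s\,D_4}=\frac{p^2-2p+9}{2(p-1)^2}=\frac12+\frac{4}{(p-1)^2}.
$$

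\textbf{Step 3: the approximation complexity.} By Theorem~\ref{theorem:extremizer-exact-energy}, the nonzero coefficient moduli are one copy of $d$, together with $\frac{p-1}{2}$ copies each of $\frac{\sqrt p+1}{2}$ and $\frac{\sqrt p-1}{2}$. Proposition~\ref{proposition:exact-best-k-general} gives the tail after keeping the largest $k$ coefficients. The total mass is $\frac{p(p-1)}{2}$, and the top coefficient carries the fraction $\frac{d^2}{2s}=\frac{p-1}{2p}\to\frac12$. The remaining $p-1$ coefficients each carry a fraction about $\frac{1}{2p}$, and their squares differ only at relative order $1/\sqrt p$. The condition on the relative squared error is tail $\le\eta^2\cdot 2s$. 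If $\eta^2\ge\frac12$, a bounded $k$ (namely $k=1$, since the tail fraction $\frac{p+1}{2p}$ tends to $\frac12$) or $o(p)$ terms suffice, giving the limit $0$. If $\eta^2<\frac12$, we need $k-1$ further coefficients with $(k-1)\frac{1}{2p}(1+O(p^{-1/2}))\ge\frac12-\eta^2+o(1)$. This gives $k/p\to 1-2\eta^2$, which is \eqref{equation:paley-approximation}.

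The main obstacle is the boundary bookkeeping in Step 3. For $\eta^2$ near $\frac12$ one has to handle the ordering of the two nonprincipal sizes and keep the error terms uniform; using upper and lower estimates for the tail with all coefficients replaced by the extreme value $\frac{(\sqrt p\pm1)^2}{4}$ should handle this. The final assertion is immediate from \eqref{equation:paley-collision} and \eqref{equation:paley-approximation}.
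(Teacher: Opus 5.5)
Your proposal is correct and follows essentially the paper's route: sharpness from the diagonal Fourier conjugate of the circulant labeling, then the exact formulas for sharp labelings to get $\operatorname{FR}$, $D_4$ and the normalized energy, and finally uniform bounds on the nonprincipal coefficient sizes to get $\kappa_\eta$. The paper additionally derives the Paley spectrum from the Gauss sum $\tau=\pm\sqrt p$ rather than quoting it. One small slip: at $\eta^2=\frac12$ exactly, $k=1$ never suffices, since $\frac{p+1}{2p}>\frac12$; however $k=2$ does, so the limit $0$ still holds.
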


\begin{proof}
Write $\chi$ for the quadratic character of ${\mathbb Z}_p$, with $\chi(0)=0$. There are $\frac{p-1}{2}$ nonzero squares and the same number of nonsquares, the nonzero squares form an index-two subgroup of ${\mathbb Z}_p^\times$, so $\chi$ is multiplicative and $\sum_t\chi(t)=0$, and since $p\equiv1\pmod4$, $-1$ is a square, which makes the graph undirected. It has degree $d=\frac{p-1}{2}$.

Let us compute the Gauss sum needed for the spectrum. Put
$$
\tau=\sum_{t\in{\mathbb Z}_p}\chi(t)e^{-\frac{2\pi it}{p}}.
$$
The substitution $t\mapsto-t$ and $\chi(-1)=1$ show that $\tau$ is real, and substituting $a=ub$ in the expansion of $|\tau|^2$ leads to
$$
|\tau|^2=\sum_{u\neq0}\chi(u)\sum_{b\neq0}e^{-\frac{2\pi i(u-1)b}{p}}=(p-1)-\sum_{u\neq0,1}\chi(u)=p,
$$
so that $\tau=\pm\sqrt p$. If $S$ is the set of nonzero squares, then
$$
1_S(t)=\frac{1_{\{t\neq0\}}+\chi(t)}{2},
$$
and for $n\neq0$ the substitution $t\mapsto n^{-1}t$ therefore yields
$$
\sum_t1_S(t)e^{-\frac{2\pi int}{p}}=\frac{-1+\chi(n)\tau}{2}.
$$
The adjacency eigenvalues are consequently $\frac{p-1}{2}$ once, $\frac{-1+\sqrt p}{2}$ with multiplicity $\frac{p-1}{2}$, and $\frac{-1-\sqrt p}{2}$ with multiplicity $\frac{p-1}{2}$. These are the usual Paley eigenvalues, obtained here without a choice of sign for $\tau$.

Since $f_p(x,y)=1_S(y-x)$, a direct change of variables shows that
$$
\widehat f_p(m,n)=1_{\{m+n=0\}}\sum_t1_S(t)e^{-\frac{2\pi int}{p}},
$$
which is a weighted partial permutation matrix, so the labeling is sharp and minimizes the Fourier ratio. Summing the absolute eigenvalues we find that
$$
\mathcal E(P_p)=\frac{p-1}{2}(1+\sqrt p)
\text{ and }
|A_p|=\frac{p(p-1)}{2},
$$
which proves \eqref{equation:paley-fr}. The fourth spectral moment is
$$
\operatorname{tr}(M^4)=\frac{(p-1)^4+(p-1)(p^2+6p+1)}{16}=\frac{p(p-1)(p^2-2p+9)}{16},
$$
and Theorem \ref{theorem:extremizer-exact-energy} now proves \eqref{equation:paley-collision} and \eqref{equation:paley-normalized-energy}.

For the approximation claim, the principal Fourier coefficient has squared modulus $d^2$ and accounts for the fraction $\frac{p-1}{2p}$ of the total squared norm, while each of the remaining $p-1$ nonzero coefficients accounts for the fraction
$$
\frac{1+O(p^{-\frac{1}{2}})}{2p}
$$
of that norm, uniformly over those coefficients. For $1\leq k\leq p$, the fraction captured by the $k$ largest coefficients is therefore
$$
\frac{1}{2}+\frac{k-1}{2p}+O(p^{-\frac{1}{2}}),
$$
uniformly in $k$, and comparing this with the required fraction $1-\eta^2$ proves \eqref{equation:paley-approximation}. At $\eta^2=\frac{1}{2}$ the same estimate shows that the required number is $o(p)$, which is all that the asserted limit needs.
\end{proof}

\begin{remark}
Subtracting the mean removes the principal Fourier coefficient. Directly subtracting $d^4$ from the fourth spectral moment produces
$$
D_4\left(f_p-\frac{p-1}{2p}\right)=\frac{(p-1)(p+1)^2}{p^2+6p+1}\asymp p
\text{ and }
\operatorname{FR}\left(f_p-\frac{p-1}{2p}\right)^2=\frac{p(p-1)}{p+1}\asymp p,
$$
so both centered quantities are proportional to $p$, as is the number of frequencies needed to approximate the nonconstant part to a fixed relative error.
\end{remark}

We can push the normalized additive energy all the way to one while keeping the minimum Fourier ratio unbounded. The following construction does this for connected regular graphs, and taking a bipartite double cover then produces graphs of density tending to one half.

\begin{theorem}[Regular separation, including density one half]
\label{theorem:regular-separation}
There are connected regular graphs $H_p$ such that, uniformly over all cyclic labelings,
$$
\frac{E_{\mathrm{edge}}(H_p,\pi)}{(2|E(H_p)|)^3}\longrightarrow1,
D_4(H_p,\pi)\longrightarrow1
\text{ and }
\operatorname{FR}_{\min}(H_p)\longrightarrow\infty.
$$
There are also connected regular bipartite graphs $B_p$ of density tending to $\frac{1}{2}$, with cyclic labelings $\beta_p$ satisfying
$$
\frac{E_{\mathrm{edge}}(B_p,\beta_p)}{(2|E(B_p)|)^3}\longrightarrow1,
D_4(B_p,\beta_p)\longrightarrow2
\text{ and }
\operatorname{FR}_{\min}(B_p)\longrightarrow\infty.
$$
In the bipartite construction, the energy and collision limits hold for the labelings $\beta_p$.
\end{theorem}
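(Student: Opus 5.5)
The construction takes complements of sparse graphs whose spectrum is large and spread out. Density near one then forces near-maximal additive energy for every labeling. The graph-energy bound of Theorem~\ref{theorem:prior-graph-energy-bound} forces the minimum Fourier ratio to grow.

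\textbf{Choice of $H_p$.} For a prime $p\equiv1\pmod4$, let $F_p$ be the disjoint union of $p$ copies of the Paley graph $P_p$ of Theorem~\ref{theorem:paley-counterexample}. Put $H_p=\overline{F_p}$. Then $N=p^2$, and $H_p$ is regular of degree $D=N-1-\frac{p-1}{2}$, with density $\alpha=\frac{D}{N}\to1$. It is connected, since two vertices in different copies are adjacent, and two vertices in the same copy have a common neighbour in another copy.

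\textbf{Energy and collision limits for $H_p$.} These hold uniformly in $\pi$. The density floor \eqref{equation:density-energy-floor} applied in ${\mathbb Z}_N^2$ gives $E_{\mathrm{edge}}(H_p,\pi)\geq\alpha(2s)^3$. Proposition~\ref{proposition:indicator-uncertainty-general} gives $E_{\mathrm{edge}}\leq(2s)^3$. Hence the normalized energy lies in $[\alpha,1]$ and tends to one. By \eqref{equation:graph-collision-dimension},
$$
\frac{1}{\alpha}=\frac{N^2}{2s}\leq D_4(H_p,\pi)=\frac{N^2(2s)^2}{E_{\mathrm{edge}}}\leq\frac{N^2}{\alpha\,2s}=\frac{1}{\alpha^2},
$$
so $D_4\to1$.

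\textbf{Spectrum and $\operatorname{FR}_{\min}(H_p)\to\infty$.} Since $F_p$ is $\frac{p-1}{2}$-regular, the complement $H_p$ has eigenvalues $D$ on the all-ones vector and $-1-\mu$ on the orthogonal complement of the all-ones vector, for the remaining eigenvalues $\mu$ of $F_p$. By the Paley spectrum computed in Theorem~\ref{theorem:paley-counterexample}, $F_p$ has eigenvalues $\frac{-1\pm\sqrt p}{2}$, each with multiplicity about $\frac{p^2}{2}$. Each of these contributes $|1+\mu|\geq\frac{\sqrt p-1}{2}$. So $\mathcal E(H_p)\gtrsim p^2\sqrt p$, while $\sqrt{2s}=\sqrt{ND}\asymp p^2$. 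Theorem~\ref{theorem:prior-graph-energy-bound} then gives $\operatorname{FR}_{\min}(H_p)\gtrsim\sqrt p\to\infty$. The only real check is the bookkeeping of the extra eigenvalues $\frac{p-1}{2}$ coming from the other components of $F_p$. They number $p-1$ and only change lower-order terms.

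\textbf{Choice of $B_p$.} Let $B_p=H_p\times K_2$ be the bipartite double cover. It has $2N$ vertices and degree $D$, so its density is $\frac{D}{2N}\to\frac12$. It is connected because $H_p$ is connected and non-bipartite, as it contains triangles. Its spectrum is $\{\pm\lambda\}$ over the eigenvalues $\lambda$ of $H_p$. Hence $\mathcal E(B_p)=2\mathcal E(H_p)$ and $2|E(B_p)|=2\cdot2s$, and
$$
\operatorname{FR}_{\min}(B_p)\geq\sqrt2\,\frac{\mathcal E(H_p)}{\sqrt{2s}}\to\infty.
$$

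\textbf{The labeling $\beta_p$ and its limits.} Label $(v,0)$ by $2x$ and $(v,1)$ by $2x+1$ in ${\mathbb Z}_{2N}$, where $v$ is labeled $x$ in some fixed labeling of $H_p$. Every directed edge $(a,b)$ then has $a-b$ odd. So $f=1_A$ has $\widehat f(0,0)=\frac{|A|}{2N}$ and $\widehat f(N,N)=-\frac{|A|}{2N}$. The energy identity keeps only these two coefficients in a lower bound, which gives
$$
E_+(A)\geq(2N)^2\cdot2\left(\frac{|A|}{2N}\right)^4=2\alpha'|A|^3,
$$
where $\alpha'=\frac{|A|}{(2N)^2}\to\frac12$. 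Together with $E_+(A)\leq|A|^3$, the normalized energy tends to one. Arguing as for $H_p$,
$$
\frac{1}{\alpha'}\leq D_4(B_p,\beta_p)\leq\frac{1}{2\alpha'^2},
$$
and both bounds tend to $2$.

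\textbf{Main obstacle.} The main risk is the growth of $\operatorname{FR}_{\min}$. It rests entirely on tracking the complement spectrum and the multiplicities of the $|1+\mu|$ terms. The energy and collision limits follow from the density estimates alone.
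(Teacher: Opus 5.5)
Your proof is correct. For $H_p$ it is the paper's argument: the paper takes the complement of $t_p$ copies of $P_p$ with $t_p\geq3$ odd and $t_p\to\infty$, and your choice $t_p=p$ is a special case. The density floor, the complement spectrum and the bound $\operatorname{FR}_{\min}(H_p)\geq\frac{p-1}{2\sqrt p}$ all match, and so does your spectral bound for $\operatorname{FR}_{\min}(B_p)$.

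The genuine difference is in the energy and collision limits for $B_p$. The paper takes $n_p$ odd, so that the product labeling by ${\mathbb Z}_2\times{\mathbb Z}_{n_p}$ is cyclic by the Chinese remainder theorem. It then invokes Proposition~\ref{proposition:weak-product-tensorization}: the two oriented edges of $K_2$ have energy $2^3$ and collision dimension $2$, so the normalized energy of $B_p$ equals that of $H_p$ and $D_4(B_p,\beta_p)=2D_4(H_p,\pi_p)$. The bipartite limits are therefore inherited from the uniform limits already proved for $H_p$.

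You instead place the two sides of the double cover on the even and odd residues of ${\mathbb Z}_{2N}$. You keep only the two Fourier coefficients at $(0,0)$ and $(N,N)$, which both have modulus $\frac{|A|}{2N}$. This gives the explicit bounds $2\alpha'\leq E_+(A)/|A|^3\leq1$ and $\frac{1}{\alpha'}\leq D_4\leq\frac{1}{2\alpha'^2}$ from density alone.

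Your route needs no parity condition on $N$ and no tensorization. It also covers every labeling that sends the bipartition to the two parity classes, including the paper's CRT labeling, which preserves residues modulo $2$. The paper's route gives exact identities rather than bounds, and presents the bipartite example as an instance of a general multiplicativity principle.
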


\begin{proof}
Let $p\equiv1\pmod4$ tend to infinity through primes, choose odd integers $t_p\geq3$ tending to infinity, set $n_p=t_pp$, and let $H_p$ be the complement of the disjoint union of $t_p$ copies of $P_p$, so that $H_p$ is $d_p$-regular with
$$
d_p=n_p-\frac{p+1}{2}.
$$
All vertices in distinct blocks are adjacent, so $H_p$ is connected and contains a triangle, and its density $\frac{d_p}{n_p}$ tends to one. The density floor \eqref{equation:density-energy-floor} and the upper bound $E_+(A)\leq|A|^3$ prove the first energy limit uniformly, and the bounds $1\leq D_4\leq(\frac{n_p}{d_p})^2$ prove the collision limit.

On the subspace of vectors summing to zero in each Paley block, the adjacency eigenvalues of $H_p$ are $-1-\lambda$, where $\lambda$ runs over the nonprincipal Paley eigenvalues, while on the block-constant subspace the eigenvalues are $d_p$ once and $-\frac{p+1}{2}$ with multiplicity $t_p-1$. Consequently
$$
\mathcal E(H_p)=d_p+\frac{(t_p-1)(p+1)}{2}+\frac{t_p(p-1)\sqrt p}{2},
$$
and the graph-energy lower bound yields
$$
\operatorname{FR}_{\min}(H_p)\geq\frac{\mathcal E(H_p)}{\sqrt{n_pd_p}}\geq\frac{p-1}{2\sqrt p}\longrightarrow\infty.
$$

Now take the weak product $B_p=K_2\times H_p$, which has $2n_p$ vertices, degree $d_p$ and density $\frac{d_p}{2n_p}\to\frac{1}{2}$. To check connectedness, lift a path in $H_p$ to the bipartite double cover, and note that a detour around an odd cycle changes the parity of its length, so both lifts of every vertex can be reached. Since $n_p$ is odd, the product labeling by ${\mathbb Z}_2\times{\mathbb Z}_{n_p}$ is a cyclic labeling by the Chinese remainder theorem. The two oriented edges of $K_2$ form a coset in ${\mathbb Z}_2^2$, so their additive energy is $2^3$ and their collision dimension is $2$, and the tensorization proved in Proposition~\ref{proposition:weak-product-tensorization} shows that
$$
\frac{E_{\mathrm{edge}}(B_p,\beta_p)}{(2n_pd_p)^3}=\frac{E_{\mathrm{edge}}(H_p,\pi_p)}{(n_pd_p)^3}
\text{ and }
D_4(B_p,\beta_p)=2D_4(H_p,\pi_p).
$$
Finally, the adjacency spectrum of a weak product consists of the pairwise products of eigenvalues, hence
$$
\operatorname{FR}_{\min}(B_p)\geq\frac{2\mathcal E(H_p)}{\sqrt{2n_pd_p}}\geq\frac{p-1}{\sqrt{2p}}\longrightarrow\infty.
$$
\end{proof}

For $H_p$ the best constant approximation has vanishing relative error, while for $B_p$ the constant alone leaves squared relative error tending to $\frac{1}{2}$ and the two Fourier frequencies of its ambient complete bipartite graph give vanishing relative error. These examples show that the Fourier ratio can grow while approximation to a fixed error requires only a few frequencies, and for an example with growing approximation complexity we return to the Paley family and \eqref{equation:paley-approximation}.

\begin{proposition}[A density baseline under random labeling]
\label{proposition:random-labeling-baseline}
Let $G$ be a graph with $N$ vertices and density $\rho=\frac{2s}{N^2}$. If $\pi$ is chosen uniformly from all vertex labelings, then, with an absolute implied constant,
$$
{\mathbb E}_\pi E_{\mathrm{edge}}(G,\pi)=\rho^4N^6+O(N^5).
$$
Suppose that $G=G_N$ is a sequence with $\rho_N\geq c>0$. Then
$$
{\mathbb E}_\pi[D_4(G_N,\pi)^{-1}]=\rho_N^2+O_c(N^{-1})
\text{ and }
D_4(G_N,\pi)-\rho_N^{-2}\longrightarrow0\text{ in probability},
$$
and moreover
$$
\rho_N\leq\min_\pi\frac{E_{\mathrm{edge}}(G_N,\pi)}{(2s_N)^3}\leq\rho_N+O_c(N^{-1}).
$$
Thus no sequence with density bounded away from both zero and one can have normalized additive edge energy tending to one uniformly over all labelings.
\end{proposition}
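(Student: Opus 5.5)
The plan is to compute the expected additive energy by counting quadruples of ordered pairs rather than working in Fourier space. Write $E_{\mathrm{edge}}(G,\pi)=\sum f_\pi(a_1)f_\pi(a_2)f_\pi(a_3)f_\pi(a_4)$, summed over quadruples $(a_1,a_2,a_3,a_4)\in({\mathbb Z}_N^2)^4$ with $a_1+a_2=a_3+a_4$. There are exactly $N^6$ such quadruples. Writing $a_i=(x_i,y_i)$, the eight coordinates $x_1,\dots,x_4,y_1,\dots,y_4$ lie in ${\mathbb Z}_N$ and satisfy $x_1+x_2=x_3+x_4$ and $y_1+y_2=y_3+y_4$.

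Call a quadruple \emph{generic} if its eight coordinates are pairwise distinct. For a generic quadruple, a uniformly random bijection $\pi$ sends these eight labels to a uniformly random ordered $8$-tuple of distinct vertices. Hence ${\mathbb E}_\pi\prod_i f_\pi(a_i)$ equals the probability that four prescribed disjoint pairs of distinct random vertices are all edges. That probability is $\frac{(2s)_{\text{disjoint}}}{\dots}$, a quantity depending only on $G$, and equals $p_4$, the normalized count of ordered $4$-matchings. The number of non-generic quadruples is $O(N^5)$, since any coincidence of two coordinates imposes one extra linear condition (when the condition is implied by the two defining equations, the coincidence forces a degenerate diagonal configuration, which we count directly). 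Each non-generic term is bounded by $1$, so these contribute $O(N^5)$ with an absolute constant.

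Thus ${\mathbb E}E_{\mathrm{edge}}=N^6p_4+O(N^5)$, and we need $p_4=\rho^4+O(N^{-1})$. Let $t$ be the number of ordered $4$-tuples of directed edges on $8$ distinct vertices. Then $t\ge(2s)^4-O(N)\cdot(2s)^3$, because forcing overlap costs one vertex choice. Also $t\le(2s)^4$, and $p_4=t/(N)_8$. Since $(N)_8=N^8(1+O(N^{-1}))$ and $(2s)^4=\rho^4N^8$, we get $p_4=\rho^4+O(N^{-1})$ uniformly. I expect the main obstacle to be the careful bookkeeping of the non-generic quadruples, specifically checking that no coincidence pattern is forced by the linear relations for a positive fraction of the $N^6$ quadruples. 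Coincidences such as $x_1=x_3$ together with $x_2=x_4$ are each $N^5$-sized, which suffices.

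For the second part, Theorem~\ref{theorem:edge-energy-identity} gives $D_4^{-1}=E_{\mathrm{edge}}/(N^2(2s)^2)=E_{\mathrm{edge}}/(\rho^2N^6)$. Dividing the expectation formula by $\rho^2N^6$ yields ${\mathbb E}D_4^{-1}=\rho^2+O(\rho^{-2}N^{-1})=\rho^2+O_c(N^{-1})$. The density floor \eqref{equation:density-energy-floor} shows $E_{\mathrm{edge}}\ge\rho^4N^6$ for every labeling, so $D_4^{-1}-\rho^2\ge0$ deterministically. Markov's inequality applied to this nonnegative variable then gives $D_4^{-1}\to\rho^2$ in probability. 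Because $\rho_N\ge c$, the map $u\mapsto1/u$ is uniformly Lipschitz near $\rho_N^2$, so $D_4-\rho_N^{-2}\to0$ in probability.

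For the final assertion, dividing the floor by $(2s)^3=\rho^3N^6$ gives the lower bound $\rho_N$ for every labeling. Some labeling has energy at most the mean, which gives the upper bound $\rho_N+O(N^5)/(\rho^3N^6)=\rho_N+O_c(N^{-1})$. If $\rho_N\le1-c$, this minimum stays at most $1-c+o(1)$, which rules out normalized energy tending to one uniformly over labelings.
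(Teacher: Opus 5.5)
Your proposal is correct and follows the paper's proof essentially step for step. Both arguments split the $N^6$ additive quadruples into generic and non-generic ones, with each coincidence leaving $N^5$ solutions. Both use the ratio $t/(N)_8=\rho^4+O(N^{-1})$ for generic quadruples, and both finish with the same density-floor-plus-Markov argument and the minimum-versus-average bound.

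The remaining loose ends are cosmetic:
\begin{itemize}
\item The placeholder fraction should simply read $t/(N)_8$.
\item Your parenthetical about coincidences implied by the defining equations is vacuous. None of the $28$ coincidences is implied, and each leaves exactly $N^5$ solutions, since one can always solve for a coordinate with coefficient $\pm1$.
\item The case $N<8$ is absorbed trivially, because $E_{\mathrm{edge}}\le N^6$.
\end{itemize}
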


\begin{proof}
There are $N^6$ ordered quadruples $((x_1,y_1),\ldots,(x_4,y_4))$ satisfying
$$
x_1+x_2=x_3+x_4
\text{ and }
y_1+y_2=y_3+y_4.
$$
For any prescribed pair among the eight scalar coordinates, imposing equality between that pair leaves exactly $N^5$ possibilities. Indeed, within one equation the equality leaves two free coordinates, while across the two equations one chooses the common coordinate and two more coordinates in each equation and then solves for the two remaining coordinates. Hence all but $O(N^5)$ additive quadruples have eight distinct scalar coordinates.

Let $T$ be the number of ordered quadruples of directed edges of $G$ whose eight endpoints are distinct. Among the $(2s)^4$ ordered quadruples of directed edges, those with any specified endpoint collision number at most $N^7$ once the edge constraints are dropped, and therefore $T=(2s)^4+O(N^7)$. For an additive quadruple with eight distinct scalar coordinates, a uniform random labeling assigns them to a uniformly random ordered eight-tuple of distinct vertices, so its probability of consisting of edges is
$$
\frac{T}{(N)_8}=\rho^4+O(N^{-1}),
$$
where $(N)_8=N(N-1)\cdots(N-7)$. Summing over the $N^6+O(N^5)$ such quadruples and bounding the exceptional contribution by $O(N^5)$ proves the expectation formula. The argument applies for sufficiently large $N$, which suffices for the estimates, and enlarging the absolute constant covers the smaller values of $N$.

The zero Fourier coefficient supplies the deterministic inequality $E_{\mathrm{edge}}(G,\pi)\geq\rho^4N^6$, which together with the edge-energy identity implies that
$$
D_4(G,\pi)^{-1}-\rho^2\geq0
\text{ and }
{\mathbb E}_\pi[D_4(G,\pi)^{-1}-\rho^2]=O(\rho^{-2}N^{-1}).
$$
Markov's inequality and $\rho\geq c$ give convergence of the inverse collision dimension, and then of the collision dimension itself, since both inverse quantities are at least $c^2$. Finally, dividing the deterministic lower bound and the expectation formula by $(2s)^3=\rho^3N^6$, and using the fact that a minimum is at most the average, proves the last pair of bounds.
\end{proof}

\begin{remark}
For graphs of density tending to $\frac{1}{2}$, the collision dimension tends to $4$ under a uniformly random labeling, and the naturally labeled Paley graphs have this same limit. Their additional feature is that the exact spectrum also forces a large number of Fourier frequencies when $\eta^2<\frac{1}{2}$.
\end{remark}

\section{Cyclic Cayley graphs}
\label{section:cayley}

For a cyclic Cayley graph, the adjacency function depends only on the difference of its two variables, so its two-dimensional Fourier transform reduces to the one-dimensional transform of the connection set. Let us work out what this says about additive energy and approximation.

Let $S\subset{\mathbb Z}_N$ be nonempty with $0\notin S$ and $S=-S$. The cyclic Cayley graph $G=\operatorname{Cay}({\mathbb Z}_N,S)$ has vertex set ${\mathbb Z}_N$, with $x\sim y$ if and only if $y-x\in S$. The condition $0\notin S$ excludes loops, and $S=-S$ makes the graph undirected. If $d=|S|$, then $G$ is $d$-regular and has $s=\frac{Nd}{2}$ undirected edges. Its directed edge set is
\begin{equation}
A_S=\{(x,x+t):x\in{\mathbb Z}_N,t\in S\},
\label{equation:cayley-edge-set}
\end{equation}
of cardinality $|A_S|=Nd$, and its adjacency function is
\begin{equation}
f_S(x,y)=1_S(y-x).
\label{equation:cayley-adjacency-function}
\end{equation}

\begin{theorem}[Exact additive edge energy]
\label{theorem:cayley-edge-energy}
Let $G=\operatorname{Cay}({\mathbb Z}_N,S)$ be as above. Then
\begin{equation}
E_+(A_S)=N^3E_+(S),
\label{equation:cayley-energy-identity}
\end{equation}
and consequently
\begin{equation}
D_4(f_S)=\frac{Nd^2}{E_+(S)}.
\label{equation:cayley-d4}
\end{equation}
\end{theorem}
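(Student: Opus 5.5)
The identity $E_+(A_S)=N^3E_+(S)$ can be checked by counting quadruples directly, with the Fourier identity serving as a cross-check. By \eqref{equation:cayley-edge-set}, an element of $A_S$ is $(x,x+t)$ with $x\in{\mathbb Z}_N$ and $t\in S$, and this parametrization is bijective. A quadruple $(x_i,x_i+t_i)$, $i=1,\dots,4$, is additive exactly when
$$
x_1+x_2=x_3+x_4
\text{ and }
x_1+t_1+x_2+t_2=x_3+t_3+x_4+t_4.
$$
Given the first equation, the second is equivalent to $t_1+t_2=t_3+t_4$. So the constraints split into two independent conditions: one on $(x_1,\dots,x_4)\in{\mathbb Z}_N^4$ and one on $(t_1,\dots,t_4)\in S^4$. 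The number of $x$-quadruples with $x_1+x_2=x_3+x_4$ is $N^3$, because $x_1,x_2,x_3$ are free and $x_4$ is then determined. The number of admissible $t$-quadruples is $E_+(S)$ by definition. Multiplying gives \eqref{equation:cayley-energy-identity}.

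Alternatively, one can confirm this on the Fourier side. From \eqref{equation:cayley-adjacency-function}, the change of variables $y=x+t$ gives $\widehat f_S(m,n)=1_{\{m+n=0\}}\sum_t1_S(t)e^{-2\pi int/N}=1_{\{m+n=0\}}\sqrt N\,\widehat{1_S}(n)$, using the one-dimensional unitary transform. Theorem~\ref{theorem:edge-energy-identity} then gives $N^2\cdot N^2\sum_n|\widehat{1_S}(n)|^4$. Proposition~\ref{proposition:general-energy-identity} applied to $H={\mathbb Z}_N$ identifies $N\sum_n|\widehat{1_S}(n)|^4$ with $E_+(S)$, so this route also gives $N^3E_+(S)$.

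For \eqref{equation:cayley-d4}, substitute into \eqref{equation:graph-collision-dimension} with $2s=|A_S|=Nd$. This gives $D_4(f_S)=N^2(Nd)^2/(N^3E_+(S))=Nd^2/E_+(S)$.

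No step is a genuine obstacle. The only points needing care are checking that the map $(x,t)\mapsto(x,x+t)$ is a bijection onto $A_S$, and keeping the normalization constants consistent if the Fourier route is used.
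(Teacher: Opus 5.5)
Your proof is correct and is essentially the paper's argument. The paper notes that $(x,t)\mapsto(x,x+t)$ is a group automorphism of ${\mathbb Z}_N^2$ carrying ${\mathbb Z}_N\times S$ onto $A_S$, and that additive energy is invariant under isomorphisms and multiplicative under Cartesian products; your splitting of the quadruple condition into $x_1+x_2=x_3+x_4$ and $t_1+t_2=t_3+t_4$ unpacks exactly this, while your Fourier cross-check (consistent with Proposition~\ref{proposition:cayley-fourier-reduction}) and the substitution giving $D_4(f_S)$ are also valid.
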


\begin{proof}
The map $(x,t)\mapsto(x,x+t)$ is a group automorphism of ${\mathbb Z}_N^2$ that maps ${\mathbb Z}_N\times S$ onto $A_S$. Additive energy is invariant under group isomorphisms and multiplicative under Cartesian products, and $E_+({\mathbb Z}_N)=N^3$, so $E_+(A_S)=N^3E_+(S)$. Substitution into $D_4(f_S)=\frac{N^2(Nd)^2}{E_+(A_S)}$ completes the proof.
\end{proof}

The Fourier calculation from \cite{GuptaIosevich2026} also reduces the collision dimension to that of the connection set. For a function $h:{\mathbb Z}_N\to{\mathbb C}$ define
$$
\widehat h(n)=\frac{1}{\sqrt N}\sum_{t\in{\mathbb Z}_N}h(t)e^{\frac{-2\pi int}{N}}.
$$

\begin{proposition}[Fourier reduction to the connection set]
\label{proposition:cayley-fourier-reduction}
For the adjacency function \eqref{equation:cayley-adjacency-function},
\begin{equation}
\widehat f_S(m,n)=\sqrt N1_{\{m+n=0\}}\widehat{1_S}(n),
\label{equation:cayley-fourier-transform}
\end{equation}
and consequently
\begin{equation}
\operatorname{FR}(f_S)=\operatorname{FR}(1_S)
\label{equation:cayley-fr-reduction}
\end{equation}
and
\begin{equation}
D_4(f_S)=D_4(1_S).
\label{equation:cayley-d4-reduction}
\end{equation}
\end{proposition}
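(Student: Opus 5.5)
The plan is to compute the two-dimensional transform directly from the definition \eqref{equation:two-dimensional-transform} by the change of variables $y=x+t$. Then I will read off both $\operatorname{FR}$ and $D_4$ from the resulting one-dimensional transform, using the fact that all nonzero coefficients lie on the antidiagonal.

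\emph{Step one: the transform.} Substituting $y=x+t$ gives
$$
\widehat f_S(m,n)=\frac{1}{N}\sum_{x,t}1_S(t)e^{-\frac{2\pi i((m+n)x+nt)}{N}}
=\frac{1}{N}\left(\sum_xe^{-\frac{2\pi i(m+n)x}{N}}\right)\left(\sum_t1_S(t)e^{-\frac{2\pi int}{N}}\right).
$$
The first factor equals $N1_{\{m+n=0\}}$ by character orthogonality. The second factor equals $\sqrt N\widehat{1_S}(n)$ under the one-dimensional normalization just introduced. Multiplying these and dividing by $N$ yields \eqref{equation:cayley-fourier-transform}.

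\emph{Step two: the norms.} For each $n$ there is exactly one $m$, namely $m=-n$, with $m+n=0$. The nonzero coefficients of $\widehat f_S$ are therefore indexed bijectively by $n\in{\mathbb Z}_N$, with moduli $\sqrt N|\widehat{1_S}(n)|$. This gives, for every $p$,
$$
\|\widehat f_S\|_p=\sqrt N\|\widehat{1_S}\|_p .
$$
Both $\operatorname{FR}$ and $D_4$ are scale-invariant ratios, homogeneous of degree zero in $\widehat f$. The factor $\sqrt N$ cancels in each, which gives \eqref{equation:cayley-fr-reduction} and \eqref{equation:cayley-d4-reduction}.

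\emph{Consistency check.} Applying the energy identity on ${\mathbb Z}_N$ gives $D_4(1_S)=\frac{Nd^2}{E_+(S)}$, which agrees with \eqref{equation:cayley-d4}.

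There is no real obstacle in this argument. The only point requiring care is the normalization bookkeeping: the factor $\frac1N$ in two dimensions against the factor $\frac{1}{\sqrt N}$ in one dimension. This bookkeeping is what produces the factor $\sqrt N$ in \eqref{equation:cayley-fourier-transform}.
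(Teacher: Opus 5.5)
Your proposal is correct and follows essentially the same route as the paper: the substitution $y=x+t$ factors the two-dimensional sum, orthogonality gives $N1_{\{m+n=0\}}$, and the common factor $\sqrt N$ cancels from the degree-zero ratios $\operatorname{FR}$ and $D_4$. One small wording point: it is the antidiagonal coefficients, rather than the nonzero ones, that are indexed bijectively by $n$, since some $\widehat{1_S}(n)$ may vanish; this does not affect the identity $\|\widehat f_S\|_p=\sqrt N\|\widehat{1_S}\|_p$.
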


\begin{proof}
Substituting $y=x+t$ we obtain
$$
\widehat f_S(m,n)=\frac{1}{N}\left(\sum_xe^{-\frac{2\pi i(m+n)x}{N}}\right)\left(\sum_t1_S(t)e^{-\frac{2\pi int}{N}}\right)=\sqrt N1_{\{m+n=0\}}\widehat{1_S}(n),
$$
so the nonzero coefficient vectors agree up to the common factor $\sqrt N$, which cancels from both ratios.
\end{proof}

The same calculation produces the adjacency spectrum, and the resulting identity between the Fourier ratio and graph energy is due to \cite{GuptaIosevich2026}. For $\xi\in{\mathbb Z}_N$ define the normalized character
$$
\chi_\xi(x)=\frac{1}{\sqrt N}e^{\frac{2\pi i\xi x}{N}}.
$$
These normalized characters form an orthonormal basis of $\ell^2({\mathbb Z}_N)$, and if $A$ is the adjacency operator, then
$$
(A\chi_\xi)(x)=\sum_{t\in S}\chi_\xi(x+t)=\chi_\xi(x)\sum_{t\in S}e^{\frac{2\pi i\xi t}{N}}.
$$
Thus the adjacency eigenvalue corresponding to $\chi_\xi$ is
\begin{equation}
\theta_\xi=\sum_{t\in S}e^{\frac{2\pi i\xi t}{N}},
\label{equation:cayley-adjacency-eigenvalue}
\end{equation}
which is real because $S=-S$.

\begin{proposition}[Fourier ratio and graph energy, \cite{GuptaIosevich2026}]
\label{proposition:cayley-fr-graph-energy}
Let $G=\operatorname{Cay}({\mathbb Z}_N,S)$ with $d=|S|$. Then
\begin{equation}
\operatorname{FR}(f_S)=\frac{\mathcal E(G)}{\sqrt{Nd}}.
\label{equation:cayley-fr-graph-energy}
\end{equation}
\end{proposition}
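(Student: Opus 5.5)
We prove the identity $\operatorname{FR}(f_S)=\frac{\mathcal E(G)}{\sqrt{Nd}}$ by computing numerator and denominator of $\operatorname{FR}(f_S)=\frac{\|\widehat f_S\|_1}{\|\widehat f_S\|_2}$ directly from the Fourier reduction \eqref{equation:cayley-fourier-transform}. The support of $\widehat f_S$ lies on the antidiagonal $m=-n$, and there the coefficient is $\sqrt N\widehat{1_S}(n)$. From the definition of the one-dimensional transform,
$$
\sqrt N\widehat{1_S}(n)=\sum_{t\in S}e^{-\frac{2\pi int}{N}}=\theta_{-n},
$$
which is the adjacency eigenvalue \eqref{equation:cayley-adjacency-eigenvalue} at frequency $-n$.

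Since $n\mapsto-n$ is a bijection of ${\mathbb Z}_N$, the multiset of nonzero values $|\widehat f_S(m,n)|$ is exactly the multiset $\{|\theta_\xi|:\xi\in{\mathbb Z}_N\}$. The normalized characters $\chi_\xi$ form an orthonormal eigenbasis of the adjacency operator, so the $\theta_\xi$, counted with multiplicity, are all $N$ adjacency eigenvalues. Therefore
$$
\|\widehat f_S\|_1=\sum_\xi|\theta_\xi|=\mathcal E(G).
$$
For the denominator we use Plancherel:
$$
\|\widehat f_S\|_2^2=\|f_S\|_2^2=|A_S|=Nd.
$$
Dividing gives the identity.

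Alternatively, $\widehat f_S=UM U$ is supported on one antidiagonal, so it is a weighted partial permutation matrix. Theorem~\ref{theorem:prior-equality-criterion} then says the natural labeling is sharp, and sharpness gives $\operatorname{FR}(f_S)=\frac{\mathcal E(G)}{\sqrt{2s}}$ with $2s=Nd$. I would mention this as a consistency check.

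The only point needing care is the sign convention: the transform uses $e^{-2\pi int/N}$ while the eigenvalue uses $e^{+2\pi i\xi t/N}$. The two are matched by reindexing $\xi=-n$, and in any case $\theta_\xi$ is real and $\theta_{-\xi}=\theta_\xi$ because $S=-S$.
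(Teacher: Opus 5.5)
Your proposal is correct and follows essentially the same route as the paper: the Fourier reduction places the coefficients on the antidiagonal, where they equal the real eigenvalues $\theta_{\pm n}$ (identical since $S=-S$), so $\|\widehat f_S\|_1=\mathcal E(G)$, and Plancherel gives $\|\widehat f_S\|_2^2=Nd$. One minor wording point: some $\theta_\xi$ may vanish, so the multisets that match are the nonzero $|\widehat f_S(m,n)|$ and the nonzero $|\theta_\xi|$, which does not affect the $\ell^1$ sum.
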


\begin{proof}
The coefficient at $(-n,n)$ equals the real eigenvalue $\theta_n$, so $\|\widehat f_S\|_1=\mathcal E(G)$, while Plancherel shows that $\|\widehat f_S\|_2^2=Nd$.
\end{proof}

Combining Theorem \ref{theorem:cayley-edge-energy}, Proposition \ref{proposition:cayley-fr-graph-energy} and the collision hierarchy produces a relation between graph energy and additive energy.

\begin{corollary}[Graph energy versus additive energy]
\label{corollary:graph-energy-additive-energy}
Let $G=\operatorname{Cay}({\mathbb Z}_N,S)$ and let $d=|S|$. Then
\begin{equation}
\mathcal E(G)^2E_+(S)\geq N^2d^3,
\label{equation:graph-energy-additive-energy}
\end{equation}
or equivalently
$$
\mathcal E(G)\geq\frac{Nd^{\frac{3}{2}}}{\sqrt{E_+(S)}}.
$$
\end{corollary}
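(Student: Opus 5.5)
The plan is to combine the collision hierarchy with the two identities already established for $f_S$. By Theorem~\ref{theorem:collision-hierarchy-general} applied to $f_S$ on $H={\mathbb Z}_N^2$, we have $D_4(f_S)\leq\operatorname{FR}(f_S)^2$. We then substitute each side.

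For the left side we use the exact formula \eqref{equation:cayley-d4} from Theorem~\ref{theorem:cayley-edge-energy}, namely
$$
D_4(f_S)=\frac{Nd^2}{E_+(S)}.
$$
This is well defined because $S$ is nonempty, so $E_+(S)\geq|S|^2>0$. For the right side we use Proposition~\ref{proposition:cayley-fr-graph-energy}, which gives
$$
\operatorname{FR}(f_S)^2=\frac{\mathcal E(G)^2}{Nd}.
$$

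Inserting both into the hierarchy yields
$$
\frac{Nd^2}{E_+(S)}\leq\frac{\mathcal E(G)^2}{Nd}.
$$
Multiplying through by the positive quantity $NdE_+(S)$ gives $N^2d^3\leq\mathcal E(G)^2E_+(S)$, which is \eqref{equation:graph-energy-additive-energy}. The equivalent form follows by dividing by $E_+(S)>0$ and taking square roots, since both sides are nonnegative.

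There is no real obstacle here. The only points to check are that the two cited formulas use the same normalization, which they do since both are stated for $f_S$ on ${\mathbb Z}_N^2$, and that $E_+(S)$ is positive. One could equally work on ${\mathbb Z}_N$ using \eqref{equation:cayley-fr-reduction} and \eqref{equation:cayley-d4-reduction} together with $D_4(1_S)=\frac{Nd^2}{E_+(S)}$ from Proposition~\ref{proposition:indicator-uncertainty-general}, and this gives the same inequality.
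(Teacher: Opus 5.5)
Your proposal is correct and follows the paper's proof: substitute $D_4(f_S)=\frac{Nd^2}{E_+(S)}$ and $\operatorname{FR}(f_S)^2=\frac{\mathcal E(G)^2}{Nd}$ into the collision hierarchy $D_4(f_S)\leq\operatorname{FR}(f_S)^2$ and rearrange. Your extra checks ($E_+(S)\geq|S|^2>0$, consistent normalization) and the alternative one-dimensional route through $1_S$ are both sound.
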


\begin{proof}
Substitute $D_4(f_S)=\frac{Nd^2}{E_+(S)}$ and $\operatorname{FR}(f_S)^2=\frac{\mathcal E(G)^2}{Nd}$ into $D_4(f_S)\leq\operatorname{FR}(f_S)^2$ and rearrange.
\end{proof}

Substituting these identities into the approximation bounds yields the following theorem.

\begin{theorem}[Cayley graph approximation complexity]
\label{theorem:cayley-approximation-complexity}
Let $G=\operatorname{Cay}({\mathbb Z}_N,S)$, let $d=|S|$, and let $0<\eta<1$. Then
\begin{equation}
\left\lceil(1-\eta^2)^2\frac{Nd^2}{E_+(S)}\right\rceil\leq\kappa_\eta(f_S)\leq\min\left\{N,\left\lfloor\frac{\frac{\mathcal E(G)^2}{Nd}-1}{\eta^2}\right\rfloor+1\right\}.
\label{equation:cayley-two-sided-complexity}
\end{equation}
\end{theorem}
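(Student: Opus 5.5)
The plan is to specialize Theorem~\ref{theorem:two-sided-graph-approximation} (or, equivalently, Corollary~\ref{corollary:two-sided-general} applied to $f_S$) to the natural labeling of $G=\operatorname{Cay}({\mathbb Z}_N,S)$, replacing each quantity by its Cayley-graph expression from the preceding results of this section. Concretely, the general bound reads
$$
\left\lceil(1-\eta^2)^2D_4(f_S)\right\rceil\leq\kappa_\eta(f_S)\leq\min\left\{|\operatorname{supp}(\widehat f_S)|,\left\lfloor\frac{\operatorname{FR}(f_S)^2-1}{\eta^2}\right\rfloor+1\right\},
$$
so the proof reduces to three substitutions.

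For the lower bound, I would insert $D_4(f_S)=\frac{Nd^2}{E_+(S)}$ from Theorem~\ref{theorem:cayley-edge-energy}. This formula comes from the identity $E_+(A_S)=N^3E_+(S)$ and the edge-energy identity with $2s=Nd$.

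For the second term of the upper bound, I would use Proposition~\ref{proposition:cayley-fr-graph-energy}, which gives $\operatorname{FR}(f_S)^2=\frac{\mathcal E(G)^2}{Nd}$.

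For the first term of the upper bound, I would use Proposition~\ref{proposition:cayley-fourier-reduction}. It shows that $\widehat f_S$ is supported on the antidiagonal $\{(m,n):m+n=0\}$, which contains exactly $N$ points, so $|\operatorname{supp}(\widehat f_S)|\leq N$. Since $\kappa_\eta(f_S)\leq|\operatorname{supp}(\widehat f_S)|$, keeping all $N$ antidiagonal coefficients gives zero error, and the term $N$ is admissible in the minimum.

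There is essentially no obstacle. The one point to check is that replacing $|\operatorname{supp}(\widehat f_S)|$ by the possibly larger $N$ still yields a valid upper bound, and this is immediate from the monotonicity of the minimum.
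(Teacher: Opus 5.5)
Your proposal is correct and matches the paper's proof, which likewise applies Corollary~\ref{corollary:two-sided-general} to $f_S$ and substitutes $D_4(f_S)=\frac{Nd^2}{E_+(S)}$ and $\operatorname{FR}(f_S)^2=\frac{\mathcal E(G)^2}{Nd}$. Your justification of the term $N$ through the antidiagonal support of $\widehat f_S$ spells out a step that the paper leaves implicit.
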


\begin{proof}
Apply Corollary \ref{corollary:two-sided-general} and substitute $D_4(f_S)=\frac{Nd^2}{E_+(S)}$ and $\operatorname{FR}(f_S)^2=\frac{\mathcal E(G)^2}{Nd}$.
\end{proof}

\subsection{The cycle graph}

Let $N\geq5$ and consider the cycle graph $C_N=\operatorname{Cay}({\mathbb Z}_N,\{-1,1\})$, so that $d=2$. The sums of two elements of $S=\{-1,1\}$ are $-2$, $0$ and $2$, the sums $-2$ and $2$ each have one ordered representation, while $0$ has two, and since $N\geq5$ these three residues are distinct. Therefore
\begin{equation}
E_+(S)=1^2+2^2+1^2=6,
\label{equation:cycle-connection-energy}
\end{equation}
and Theorem \ref{theorem:cayley-edge-energy} yields
\begin{equation}
E_+(A_{C_N})=6N^3
\label{equation:cycle-edge-energy}
\end{equation}
and
\begin{equation}
D_4(f_{C_N})=\frac{N\cdot2^2}{6}=\frac{2N}{3}.
\label{equation:cycle-d4}
\end{equation}
The adjacency eigenvalues are $\lambda_j=2\cos(\frac{2\pi j}{N})$, so
$$
\mathcal E(C_N)=\sum_{j=0}^{N-1}2\left|\cos\left(\frac{2\pi j}{N}\right)\right|,
$$
and the Riemann-sum limit is
$$
\frac{\mathcal E(C_N)}{N}\longrightarrow\int_0^12|\cos(2\pi t)|dt=\frac{4}{\pi}.
$$
Consequently
\begin{equation}
\operatorname{FR}(f_{C_N})^2=\frac{\mathcal E(C_N)^2}{2N}\sim\frac{8}{\pi^2}N,
\label{equation:cycle-fr-asymptotic}
\end{equation}
and Theorem \ref{theorem:cayley-approximation-complexity} therefore implies that $\kappa_\eta(f_{C_N})\asymp_\eta N$ for every fixed $0<\eta<1$. More explicitly,
$$
\left\lceil\frac{2}{3}(1-\eta^2)^2N\right\rceil\leq\kappa_\eta(f_{C_N})\leq N,
$$
so for every fixed relative error less than one the number of required frequencies is proportional to $N$.

\subsection{Parity-labeled complete bipartite graphs}

Let $N$ be even and let $S$ be the set of odd residues modulo $N$, so that $G=\operatorname{Cay}({\mathbb Z}_N,S)$ is the parity-labeled graph $K_{\frac{N}{2},\frac{N}{2}}$. The set $S$ is a coset of the subgroup of even residues, and if a set $C$ is a coset of a subgroup with $|C|=d$, then every choice of three elements $c_1,c_2,c_3\in C$ determines $c_4=c_1+c_2-c_3\in C$, so that $E_+(C)=d^3$. Here $d=\frac{N}{2}$, hence $E_+(S)=(\frac{N}{2})^3$ and
$$
D_4(f_S)=\frac{N(\frac{N}{2})^2}{(\frac{N}{2})^3}=2.
$$
The adjacency spectrum of $K_{\frac{N}{2},\frac{N}{2}}$ consists of $\frac{N}{2}$, $-\frac{N}{2}$ and $0$ with multiplicity $N-2$, so $\mathcal E(G)=N$ and
$$
\operatorname{FR}(f_S)^2=\frac{N^2}{N(\frac{N}{2})}=2,
$$
which confirms that $D_4(f_S)=\operatorname{FR}(f_S)^2=2$. Since the two nonzero coefficients have equal modulus, the exact approximation complexity is one when $\eta^2\geq\frac{1}{2}$ and two when $\eta^2<\frac{1}{2}$.

\subsection{The complete graph as a Cayley graph}

The complete graph $K_N$ is $\operatorname{Cay}({\mathbb Z}_N,{\mathbb Z}_N\setminus\{0\})$, with $d=N-1$. From Example \ref{example:complete-graph-centered},
$$
D_4(f)=\frac{N(N-1)}{N^2-3N+3},
$$
and using \eqref{equation:cayley-d4} we obtain the exact connection-set energy
$$
E_+({\mathbb Z}_N\setminus\{0\})=(N-1)(N^2-3N+3),
$$
which also follows from a direct representation count. As in the centered calculation, the zero-frequency coefficient explains why $D_4(f)$ tends to one.

\section{Weak products and tensorization}
\label{section:weak-products}

The weak graph product separates the variables, and as a result the additive edge energy, the collision dimension and the Fourier ratio all multiply. We use the same definitions when the vertices are labeled by an arbitrary finite abelian group, with the unitary Fourier transform on that group, and for the product graph the labeling group will be ${\mathbb Z}_{N_1}\times{\mathbb Z}_{N_2}$.

Let $G$ and $H$ be graphs with at least one edge each and with labelings $\pi$ by ${\mathbb Z}_{N_1}$ and $\sigma$ by ${\mathbb Z}_{N_2}$. Their weak product $G\times H$ has vertex set $V(G)\times V(H)$, with $(g_1,h_1)\sim(g_2,h_2)$ if and only if $g_1\sim g_2$ in $G$ and $h_1\sim h_2$ in $H$, and we use the product labeling by ${\mathbb Z}_{N_1}\times{\mathbb Z}_{N_2}$. If $N_1$ and $N_2$ are coprime, the Chinese remainder theorem transports this to a cyclic labeling by ${\mathbb Z}_{N_1N_2}$.

\begin{proposition}[Tensorization under weak products]
\label{proposition:weak-product-tensorization}
Under the product labeling,
\begin{equation}
E_{\mathrm{edge}}(G\times H,\pi\times\sigma)=E_{\mathrm{edge}}(G,\pi)E_{\mathrm{edge}}(H,\sigma),
\label{equation:weak-product-energy-tensor}
\end{equation}
\begin{equation}
D_4(G\times H,\pi\times\sigma)=D_4(G,\pi)D_4(H,\sigma),
\label{equation:weak-product-d4-tensor}
\end{equation}
and
\begin{equation}
\operatorname{FR}(G\times H,\pi\times\sigma)=\operatorname{FR}(G,\pi)\operatorname{FR}(H,\sigma).
\label{equation:weak-product-fr-tensor}
\end{equation}
Consequently, for every $0<\eta<1$,
$$
\kappa_\eta(G\times H,\pi\times\sigma)\geq\lceil(1-\eta^2)^2D_4(G,\pi)D_4(H,\sigma)\rceil.
$$
\end{proposition}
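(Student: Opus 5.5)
The plan is to reduce everything to the observation that, under the product labeling, the directed edge set of $G\times H$ is a Cartesian product of the two directed edge sets, up to a coordinate permutation, and then to use multiplicativity of additive energy and of Fourier transforms under products. Identify the labeling group of $G\times H$ with ${\mathbb Z}_{N_1}\times{\mathbb Z}_{N_2}$. A pair of vertices $((x_1,x_2),(y_1,y_2))$ is an oriented edge exactly when $(x_1,y_1)\in A_\pi$ and $(x_2,y_2)\in A_\sigma$. Hence the group isomorphism
$$
\Phi:({\mathbb Z}_{N_1}^2)\times({\mathbb Z}_{N_2}^2)\longrightarrow({\mathbb Z}_{N_1}\times{\mathbb Z}_{N_2})^2,
$$
sending $((x_1,y_1),(x_2,y_2))$ to $((x_1,x_2),(y_1,y_2))$, maps $A_\pi\times A_\sigma$ onto $A_{\pi\times\sigma}$. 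This gives the structural input.

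Next I will establish \eqref{equation:weak-product-energy-tensor}. Additive energy is invariant under group isomorphisms, as already used in Theorem~\ref{theorem:cayley-edge-energy}. It is also multiplicative under Cartesian products: a quadruple in $A_\pi\times A_\sigma$ is additive if and only if both coordinate quadruples are additive. Together these give the energy identity. For \eqref{equation:weak-product-d4-tensor} I will substitute into \eqref{equation:graph-collision-dimension}, or rather its analogue for a general labeling group, namely $D_4=\frac{|\text{group}|\,|A|^2}{E_+(A)}$ from Proposition~\ref{proposition:general-energy-identity}. The group order is $(N_1N_2)^2=N_1^2N_2^2$ and $|A_{\pi\times\sigma}|=|A_\pi||A_\sigma|$, so all three factors multiply.

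For the Fourier ratio I will verify that $f_{\pi\times\sigma}\circ\Phi=f_\pi\otimes f_\sigma$. The characters of the product group are products of characters, and the unitary normalization factors as $(N_1N_2)^{-1}=N_1^{-1}N_2^{-1}$. Consequently the transform of $f_{\pi\times\sigma}$ is, up to the induced relabeling of the dual group, the tensor product $\widehat f_\pi\otimes\widehat f_\sigma$. Since $\ell^1$ and $\ell^2$ norms are multiplicative on tensor products, $\operatorname{FR}$ multiplies. This also gives a second derivation of the $D_4$ identity via $\ell^4$ norms.

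Finally, the bound on $\kappa_\eta$ follows by applying Corollary~\ref{corollary:two-sided-general} on the group ${\mathbb Z}_{N_1}\times{\mathbb Z}_{N_2}$ squared, which is a general finite abelian group and so is covered, and then inserting the product formula for $D_4$. In the coprime case the Chinese remainder isomorphism transports everything to ${\mathbb Z}_{N_1N_2}$ by the same invariance.

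The only step requiring care is keeping track of the coordinate shuffle $\Phi$ and the dual-group relabeling, so that the invariance statements are applied to genuine group isomorphisms. I expect no real obstacle there.
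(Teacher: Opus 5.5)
Your proposal is correct and follows essentially the same route as the paper: the coordinate shuffle identifies the directed edge set with $A_\pi\times A_\sigma$, so the additive energy multiplies, the Fourier transform tensorizes so the $\ell^1$, $\ell^2$ and $\ell^4$ norms multiply, and the $\kappa_\eta$ bound follows from the collision lower bound. The only cosmetic difference is that you get the $D_4$ identity mainly from the energy identity, using that the group order and $|A|$ multiply, whereas the paper uses multiplicativity of $\ell^4$ norms; both are valid, and you note the second derivation as well.
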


\begin{proof}
After the coordinate permutation $((x_1,x_2),(y_1,y_2))\mapsto((x_1,y_1),(x_2,y_2))$, the directed edge set of the weak product is the Cartesian product $A_{\pi\times\sigma}=A_\pi\times A_\sigma$, and an additive quadruple in this set is exactly a pair consisting of an additive quadruple in $A_\pi$ and an additive quadruple in $A_\sigma$, which proves \eqref{equation:weak-product-energy-tensor}. The adjacency function is the tensor product $f_{\pi\times\sigma}=f_\pi\otimes f_\sigma$, meaning that
$$
(f_\pi\otimes f_\sigma)((x_1,x_2),(y_1,y_2))=f_\pi(x_1,y_1)f_\sigma(x_2,y_2),
$$
and separating the sums in the Fourier transform shows that $\widehat f_{\pi\times\sigma}=\widehat f_\pi\otimes\widehat f_\sigma$. The $\ell^1$, $\ell^2$ and $\ell^4$ norms multiply under tensor products, which proves \eqref{equation:weak-product-d4-tensor} and \eqref{equation:weak-product-fr-tensor}, and the approximation lower bound follows from Theorem \ref{theorem:collision-approximation-general}.
\end{proof}

\begin{remark}
Weak products of sharp graphs of coprime orders remain sharp \cite{GuptaIosevichEtAl2026}, and the preceding proposition also determines their additive edge energy and collision dimension. It supplies a lower bound for approximation complexity, although it does not assert that approximation complexity multiplies.
\end{remark}

\section{Stability and its necessary scale}
\label{section:adjacency-stability}

How much can we conclude when a labeling comes close to the graph-energy bound? We first approximate its Fourier transform by a weighted partial permutation matrix, with a bound that involves both the deficit and a spectral scale, and the example that follows shows why the latter cannot be omitted, even for regular graphs admitting a sharp labeling.

\begin{theorem}[A quantitative nuclear-norm estimate]
\label{theorem:nuclear-stability}
Let $B$ be a complex $n\times n$ matrix, and write
$$
L(B)=\sum_{i,j}|B_{ij}|
\text{ and }
b_\infty=\max_{i,j}|B_{ij}|.
$$
There is a weighted partial permutation matrix $W$ such that
\begin{equation}
\|B-W\|_{\mathrm F}^2\leq2b_\infty(L(B)-\|B\|_*),
\label{equation:nuclear-stability}
\end{equation}
and $W$ can be obtained by retaining the entries of $B$ along a single permutation and setting all other entries to zero.
\end{theorem}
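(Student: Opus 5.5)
The plan is to reuse the unitary witness from the proof of Lemma~\ref{lemma:nuclear-entrywise} and turn its entrywise slack into a bound on the mass that lies off a single permutation, choosing that permutation by Birkhoff averaging. Write $B=V\Sigma W^*$ and $Q=VW^*$, which is unitary, exactly as in that proof. Then
$$
\|B\|_*=\operatorname{Re}\operatorname{tr}(Q^*B)=\sum_{i,j}\operatorname{Re}\bigl(\overline{Q_{ij}}B_{ij}\bigr)\leq\sum_{i,j}|Q_{ij}||B_{ij}|.
$$
Subtracting this from $L(B)$ gives the deficit bound
$$
L(B)-\|B\|_*\geq\sum_{i,j}|B_{ij}|\bigl(1-|Q_{ij}|\bigr).
$$
This says that, weighted by $|B_{ij}|$, the moduli $|Q_{ij}|$ are close to one wherever $B$ has mass.

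Next, form $P_{ij}=|Q_{ij}|^2$. Since $Q$ is unitary, its rows and columns have squared norm one, so $P$ is doubly stochastic. By Birkhoff's theorem, $P=\sum_\ell\mu_\ell P_{\sigma_\ell}$ is a convex combination of permutation matrices. For a permutation $\sigma$, let $W_\sigma$ keep the entries $B_{i\sigma(i)}$ and set all other entries to zero. This is a weighted partial permutation matrix, since zero retained entries are simply empty.

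Averaging over $\sigma$ drawn with weights $\mu_\ell$, each position $(i,j)$ is retained with probability $P_{ij}$. Hence the expected off-permutation $\ell^1$ mass is
$$
\sum_{i,j}|B_{ij}|\bigl(1-|Q_{ij}|^2\bigr)\leq2\sum_{i,j}|B_{ij}|\bigl(1-|Q_{ij}|\bigr)\leq2\bigl(L(B)-\|B\|_*\bigr).
$$
The first inequality uses $1-t^2=(1-t)(1+t)\leq2(1-t)$ for $t\in[0,1]$. Some $\sigma$ in the support of the Birkhoff decomposition then satisfies $\sum_{(i,j):\,j\neq\sigma(i)}|B_{ij}|\leq2(L(B)-\|B\|_*)$.

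Finally, for that $\sigma$,
$$
\|B-W_\sigma\|_{\mathrm F}^2=\sum_{j\neq\sigma(i)}|B_{ij}|^2\leq b_\infty\sum_{j\neq\sigma(i)}|B_{ij}|,
$$
which yields \eqref{equation:nuclear-stability}. The only step needing care is the choice of the permutation. The natural first idea, taking the positions where $|Q_{ij}|$ is large, need not form a permutation. Passing through the doubly stochastic matrix $|Q_{ij}|^2$ and Birkhoff's theorem resolves this. The price is the factor $2$ from comparing $1-t^2$ with $1-t$, which matches the constant in the statement.
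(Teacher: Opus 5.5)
Your proof is correct and follows the paper's argument: the same unitary $Q=VZ^*$ from the singular value decomposition, the same doubly stochastic matrix $(|Q_{ij}|^2)$ with a Birkhoff decomposition (which the paper derives from Hall's theorem rather than citing), and the same inequality $1-t^2\leq2(1-t)$. The only difference is the order of the last two steps: you average the off-permutation $\ell^1$ mass and apply $|B_{ij}|^2\leq b_\infty|B_{ij}|$ after fixing $\sigma$, whereas the paper averages $\|B-W_\sigma\|_{\mathrm F}^2$ directly; also, you use the letter $W$ both for the SVD factor and for the partial permutation matrix, a clash the paper avoids by writing $Z$.
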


\begin{proof}
Choose a singular value decomposition $B=V\Sigma Z^*$ and put $Q=VZ^*$. Then $Q$ is unitary and
$$
\|B\|_*=\operatorname{Re}\sum_{i,j}\overline{Q_{ij}}B_{ij}\leq\sum_{i,j}|Q_{ij}||B_{ij}|,
$$
so if $\Delta=L(B)-\|B\|_*$, then
\begin{equation}
\sum_{i,j}|B_{ij}|(1-|Q_{ij}|)\leq\Delta.
\label{equation:polar-deficit}
\end{equation}
The matrix $P=(|Q_{ij}|^2)$ is doubly stochastic, meaning that its entries are nonnegative and every row and column sums to one, and we recall why it is a convex combination of permutation matrices. For a set $I$ of rows, let $\mathcal N(I)$ be the set of columns containing a positive entry in one of those rows. Then
$$
|I|=\sum_{i\in I,j}P_{ij}\leq|\mathcal N(I)|,
$$
so Hall's marriage theorem supplies a perfect matching among the positive entries, meaning a choice of one positive entry in every row and every column. Subtract the smallest entry along such a matching times its permutation matrix, and unless this ends the procedure, normalize the remainder to be doubly stochastic and repeat. At least one positive entry disappears at each step, so the process terminates and produces the claimed convex combination.

Choose a random permutation $\sigma$ from this convex combination and let $W_\sigma$ retain the entries $B_{i,\sigma(i)}$. Then
$$
{\mathbb E}\|B-W_\sigma\|_{\mathrm F}^2=\sum_{i,j}|B_{ij}|^2(1-|Q_{ij}|^2),
$$
and since $1-t^2\leq2(1-t)$ for $0\leq t\leq1$, we obtain
$$
{\mathbb E}\|B-W_\sigma\|_{\mathrm F}^2\leq2b_\infty\sum_{i,j}|B_{ij}|(1-|Q_{ij}|)\leq2b_\infty\Delta.
$$
At least one permutation realizes an error no larger than this average.
\end{proof}

\begin{corollary}[Adjacency stability with a spectral scale factor]
\label{corollary:adjacency-stability-scale}
Let $\delta\geq0$, and suppose that a labeling $\pi$ of a simple graph $G$ with $N$ vertices and $s>0$ edges satisfies
$$
\operatorname{FR}(G,\pi)\leq(1+\delta)\frac{\mathcal E(G)}{\sqrt{2s}}.
$$
There is a weighted partial permutation matrix $W$ such that
\begin{equation}
\frac{\|\widehat M_\pi-W\|_{\mathrm F}^2}{\|\widehat M_\pi\|_{\mathrm F}^2}\leq\min\left\{1,\frac{2\delta\mathcal E(G)}{N}\right\}\leq\min\left\{1,2\delta\sqrt{\frac{2s}{N}}\right\}.
\label{equation:adjacency-stability-scale}
\end{equation}
In particular, the estimate is uniform over graph families of bounded average degree.
\end{corollary}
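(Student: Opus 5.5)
We apply Theorem~\ref{theorem:nuclear-stability} to $B=\widehat M_\pi=UM_\pi U$, read off the three quantities $L(B)$, $\|B\|_*$ and $b_\infty$, and then normalize by $\|B\|_{\mathrm F}^2$.

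First, the deficit. Unitary multiplication preserves singular values, so $\|B\|_*=\mathcal E(G)$. By definition $L(B)=\|\widehat f_\pi\|_1=\operatorname{FR}(G,\pi)\sqrt{2s}$, since $\|\widehat f_\pi\|_2=\sqrt{2s}$. The hypothesis therefore gives
$$
L(B)-\|B\|_*\leq\delta\mathcal E(G).
$$

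Second, the entry scale. Every Fourier coefficient satisfies
$$
|\widehat f_\pi(m,n)|\leq\frac{1}{N}\sum_{x,y}f_\pi(x,y)=\frac{2s}{N},
$$
so $b_\infty\leq\frac{2s}{N}$. This bound also follows from $b_\infty\leq\|B\|_{\mathrm{op}}$, but the trivial estimate is the cleaner choice.

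Substituting both bounds into \eqref{equation:nuclear-stability} gives
$$
\|B-W\|_{\mathrm F}^2\leq2\cdot\frac{2s}{N}\cdot\delta\mathcal E(G).
$$
Dividing by $\|B\|_{\mathrm F}^2=2s$ yields the relative bound $\frac{2\delta\mathcal E(G)}{N}$.

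The bound $1$ inside the minimum comes from the choice $W=0$, which is a weighted partial permutation matrix with relative error exactly $1$. Taking whichever of the two candidate matrices has smaller error gives the first inequality in \eqref{equation:adjacency-stability-scale}.

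For the second inequality, Cauchy--Schwarz on the eigenvalues gives
$$
\mathcal E(G)\leq\sqrt N\Big(\sum_j\lambda_j^2\Big)^{1/2}=\sqrt{N\cdot2s},
$$
so that $\frac{\mathcal E(G)}{N}\leq\sqrt{\frac{2s}{N}}$. Since $\frac{2s}{N}$ is the average degree, the estimate is uniform over families of bounded average degree.

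I expect no serious obstacle here. The only point needing care is the bound on $b_\infty$: the $\frac{1}{N}$ normalization of the two-dimensional transform gives exactly the average degree $\frac{2s}{N}$, and this is what makes the final scale $\mathcal E(G)/N$ rather than something larger.
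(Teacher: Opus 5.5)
Your proposal is correct and follows the paper's proof essentially step for step: you apply Theorem~\ref{theorem:nuclear-stability} to $\widehat M_\pi$ with deficit at most $\delta\mathcal E(G)$ and $b_\infty=\frac{2s}{N}$, divide by $2s$, use $W=0$ for the minimum with one, and bound $\mathcal E(G)\leq\sqrt{2Ns}$ by Cauchy--Schwarz. The only cosmetic difference is that you state $b_\infty\leq\frac{2s}{N}$, whereas the paper notes equality at $(0,0)$; this does not affect the argument.
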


\begin{proof}
Every Fourier coefficient of the adjacency indicator has modulus at most $\frac{2s}{N}$, with equality at $(0,0)$, so $b_\infty=\frac{2s}{N}$, and the hypothesis says that $L(\widehat M_\pi)-\|\widehat M_\pi\|_*\leq\delta\mathcal E(G)$. Apply Theorem~\ref{theorem:nuclear-stability} and divide by $2s$. The zero matrix is also an admissible approximation, which accounts for the minimum with one, and finally the Cauchy--Schwarz inequality yields $\mathcal E(G)\leq\sqrt{N\sum_j\lambda_j^2}=\sqrt{2Ns}$.
\end{proof}

\begin{proposition}[No stability modulus depending only on relative deficit]
\label{proposition:no-uniform-adjacency-stability}
There are regular graphs $G_p$ and labelings $\pi_p$ such that
$$
\frac{\operatorname{FR}(G_p,\pi_p)}{\mathcal E(G_p)/\sqrt{2|E(G_p)|}}\longrightarrow1,
$$
but
\begin{equation}
\inf_W\frac{\|\widehat M_{\pi_p}-W\|_{\mathrm F}^2}{\|\widehat M_{\pi_p}\|_{\mathrm F}^2}\longrightarrow\frac{1}{20},
\label{equation:no-uniform-stability}
\end{equation}
where the infimum is over all weighted partial permutation matrices. Each abstract graph $G_p$ also admits a labeling attaining the graph-energy bound exactly. In particular, no estimate $C\delta^c$ with absolute $C,c>0$ follows from the relative deficit $\delta$ alone.
\end{proposition}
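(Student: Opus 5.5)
The plan is to build $G_p$ as a disjoint union of two $d_p$-regular pieces. One piece is a dense, low-rank block that carries a fixed fraction of the Frobenius mass $2s$ but almost none of the graph energy. The other is a sparse-spectrum piece, for instance copies of a Paley graph or of a circulant graph with eigenvalues of size about $\sqrt{d_p}$, which carries almost all of $\mathcal E(G_p)$. The point is that the deficit $L(\widehat M_\pi)-\|\widehat M_\pi\|_*$ is at most the $\ell^1$ mass of the Fourier transform of the low-rank block. That mass is of order $N d_p$ up to constants, whereas $\mathcal E(G_p)$ is of order $N\sqrt{d_p}\cdot(\text{something growing})$ only if we arrange it correctly. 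So I will instead let the number of high-energy blocks grow faster, so that the relative deficit $\delta_p\to0$ while the Frobenius fraction of the low-rank block stays bounded below.

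\textbf{Step 1: the labeling.} I choose $N$ as a product of coprime factors, so that the Chinese remainder theorem lets me label each block as a product structure. The high-energy part gets a circulant, hence sharp, labeling on its coordinates. The low-rank block, a union of $K_{d,d}$'s, is labeled so that its Fourier transform is not a weighted partial permutation: its indicator is placed along a coset pattern whose two-dimensional transform has a few columns with several comparable nonzero entries. Conversely, relabeling that same block by parity cosets, as in Example~\ref{example:complete-bipartite-centered}, makes the whole graph sharp. This handles the claim that each abstract $G_p$ also admits a labeling attaining the bound exactly.

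\textbf{Step 2: the ratio.} I compute $\operatorname{FR}(G_p,\pi_p)$ and $\mathcal E(G_p)$ exactly. Since the spectrum of a disjoint union is the union of spectra, $\mathcal E$ is explicit. The Fourier $\ell^1$ mass of $\widehat M_{\pi_p}$ exceeds $\mathcal E$ by at most the bounded-multiple $\ell^1$ mass of the low-rank block. Choosing the parameters so that this excess is $o(\mathcal E(G_p))$ gives the ratio $\to1$.

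\textbf{Step 3: the infimum.} I use the elementary reduction
$$
\inf_W\|B-W\|_{\mathrm F}^2=\|B\|_{\mathrm F}^2-\max_\sigma\sum_i|B_{i,\sigma(i)}|^2,
$$
where $\sigma$ ranges over matchings; this holds because the optimal $W$ copies $B$ on a chosen matching. Because the Fourier transform splits into orthogonal, essentially disjoint blocks of frequencies, the maximum matching is computed blockwise. On the sharp part the matching captures everything. On the low-rank part it captures only a fixed fraction of that block's mass. The lost mass divided by $2s$ gives the limit, and I will tune the proportion of vertices in the low-rank block and the spreading pattern so that this limit equals $\frac{1}{20}$, for example a block holding a fixed fraction of the vertices and losing a fixed fraction of its Fourier mass.

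\textbf{Main obstacle.} The delicate step is making the frequency supports of the two pieces genuinely separate in $\widehat M_\pi$ when everything is labeled by one cyclic group. Otherwise cross-interference could alter both the $\ell^1$ mass and the optimal matching. I would first try placing the blocks in distinct residue classes of a CRT factor and checking that the transform is block-structured after a coordinate permutation. The final statement, that no bound $C\delta^c$ holds, then follows immediately: $\delta_p\to0$ while the left side of \eqref{equation:no-uniform-stability} stays near $\frac{1}{20}$.
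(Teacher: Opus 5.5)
Your disjoint-union design has two genuine gaps. The second is exactly the step you flag as the main obstacle.

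\textbf{Sharpness of the abstract graph.} Relabeling the $K_{d,d}$ block by parity cosets does not make the whole graph sharp. In fact no labeling of such a graph is sharp. By Corollary~\ref{corollary:sharp-even-walks}, a sharp labeling makes $M^4$ circulant, so $(M^4)_{vv}=\sum_u\operatorname{codeg}(v,u)^2$ must be the same at every vertex. In a $d$-regular graph one has $\operatorname{codeg}(v,u)\leq d$ and $\sum_u\operatorname{codeg}(v,u)=d^2$. Hence $(M^4)_{vv}\leq d^3$, with equality exactly when every codegree from $v$ is $0$ or $d$, and this forces the component of $v$ to be $K_{d,d}$. A vertex of a Paley-type piece has codegrees near $\frac{p}{4}$ and $(M^4)_{vv}$ close to $\frac{d^3}{2}$. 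So a $d$-regular graph with a $K_{d,d}$ component and another component not isomorphic to $K_{d,d}$ admits no sharp labeling. The assertion that each abstract $G_p$ attains the graph-energy bound therefore fails for your graphs.

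\textbf{Separation of Fourier supports.} The supports of vertex-disjoint pieces cannot be separated as your Steps 2 and 3 require. Suppose $M=M_1\oplus M_2$ with respect to $V_1\sqcup V_2$, and some character satisfies $M^2\chi_k=\mu\chi_k$ with $\mu\neq0$; this is what a clean partial-permutation block at $k$ and its partner frequency amounts to. Restricting to $V_1$ and to $V_2$, where $\chi_k$ does not vanish, shows that $\mu$ is an eigenvalue of both $M_1^2$ and $M_2^2$. Next to $K_{d,d}$ copies only $\mu=d^2$ survives. So the nonprincipal Paley spectrum, which carries almost all of $\mathcal E(G_p)$, is necessarily smeared across Fourier coordinates.

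Concretely, place $A_1,A_2$ on the cosets $\{0\}\times{\mathbb Z}_n$ and $\{1\}\times{\mathbb Z}_n$ of ${\mathbb Z}_2\times{\mathbb Z}_n$, take $A_2$ circulant, and write $\widehat A_i=U_nA_iU_n^*$. The Fourier conjugate is
\[
\frac{1}{2}\begin{pmatrix}\widehat A_1+\widehat A_2&\widehat A_1-\widehat A_2\\ \widehat A_1-\widehat A_2&\widehat A_1+\widehat A_2\end{pmatrix},\qquad\widehat A_2=\operatorname{diag}(\theta_\xi).
\]
The inequality $|x+\theta_\xi|+|x-\theta_\xi|\geq2|\theta_\xi|$ gives $L\geq2\mathcal E(A_2)$, against nuclear norm $\mathcal E(A_1)+\mathcal E(A_2)$. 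Once $\mathcal E(A_1)=o(\mathcal E(A_2))$, as your design requires, the ratio in the statement is at least $2-o(1)$ rather than tending to $1$.

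There is also an inconsistency in your opening paragraph. With a common degree, a piece's Frobenius share equals its vertex share. Letting the high-energy part outgrow the low-rank block therefore sends the latter's Frobenius share to zero.

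\textbf{The missing idea.} You need to superimpose the two structures rather than juxtapose them. The paper takes $G_p=K_{2,2}[H_p]$, with $M_p=A_T\otimes J_p+I_4\otimes A_{H_p}$. Since $U_pJ_pU_p^*=pE_{00}$ and $H_p$ is circulant, the conjugate is block diagonal over inner frequencies. It has scalar blocks $\lambda_j(H_p)I_4$ for $j\neq0$ and a single block $K_p=pU_4A_TU_4^*+dI_4$. Labeling $T$ with bipartition $\{0,1\}\sqcup\{2,3\}$ puts $\left(\begin{smallmatrix}d-p&ip\\-ip&d-p\end{smallmatrix}\right)$ into $K_p$. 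This gives total excess $L(K_p)-\|K_p\|_*=p+1$, which is $o(\mathcal E(G_p))$ since $\mathcal E(G_p)=2(p-1)\sqrt p+5p-1$. It also forces an unavoidable Frobenius loss of $\frac{(p+1)^2}{2}$ out of $10p^2-2p$, hence the limit $\frac{1}{20}$. Finally, the parity labeling of $T$ makes the same graph a Cayley graph on ${\mathbb Z}_{4p}$, which is sharp.
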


\begin{proof}
Let $p\equiv1\pmod4$ tend to infinity through primes, and let $H_p$ be the Paley graph on ${\mathbb Z}_p$, of degree $d=\frac{p-1}{2}$. Its nonprincipal adjacency eigenvalues are $\frac{-1+\sqrt p}{2}$ and $\frac{-1-\sqrt p}{2}$, each of multiplicity $\frac{p-1}{2}$, hence
\begin{equation}
\sum_{j\ne0}|\lambda_j(H_p)|=\frac{(p-1)\sqrt p}{2},
\label{equation:paley-nonprincipal-energy-stability}
\end{equation}
by the same Paley spectral formulas used earlier in the paper.

Let $T=K_{2,2}$ with its vertices labeled $0,1,2,3$ and bipartition $\{0,1\}\sqcup\{2,3\}$, and define $G_p=T[H_p]$, the lexicographic product in which every vertex of $T$ is replaced by a copy of $H_p$ and two copies are joined completely precisely when their vertices in $T$ are adjacent, as in Figure~\ref{figure:lexicographic}. Its adjacency matrix is
$$
M_p=A_T\otimes J_p+I_4\otimes A_{H_p}.
$$
It has $4p$ vertices and degree $2p+d$, so
\begin{equation}
\|M_p\|_{\mathrm F}^2=4p(2p+d)=10p^2-2p.
\label{equation:lexicographic-frobenius}
\end{equation}
We use the cyclic labeling supplied by ${\mathbb Z}_{4p}\cong{\mathbb Z}_4\times{\mathbb Z}_p$, under which the cyclic Fourier transform agrees with the product transform up to permutations of the frequency indices.

\begin{figure}[htb]
\centering
\begin{tikzpicture}[x=1cm,y=1cm]
\node[draw,circle,minimum size=1.3cm,font=\small] (v0) at (0,2) {$H_p$};
\node[draw,circle,minimum size=1.3cm,font=\small] (v1) at (0,0) {$H_p$};
\node[draw,circle,minimum size=1.3cm,font=\small] (v2) at (3.5,2) {$H_p$};
\node[draw,circle,minimum size=1.3cm,font=\small] (v3) at (3.5,0) {$H_p$};
\node[font=\footnotesize] at (-1.1,2) {$0$};
\node[font=\footnotesize] at (-1.1,0) {$1$};
\node[font=\footnotesize] at (4.6,2) {$2$};
\node[font=\footnotesize] at (4.6,0) {$3$};
\draw[very thick,double] (v0) -- (v2);
\draw[very thick,double] (v0) -- (v3);
\draw[very thick,double] (v1) -- (v2);
\draw[very thick,double] (v1) -- (v3);
\node[font=\footnotesize,align=center] at (1.75,-1.2) {a double line joins two copies completely};
\begin{scope}[xshift=8cm,yshift=0cm]
\draw (0,0) rectangle (3.6,3.6);
\draw (0,0.9) -- (3.6,0.9);
\draw (0,1.8) -- (3.6,1.8);
\draw (0,2.7) -- (3.6,2.7);
\draw (0.9,0) -- (0.9,3.6);
\draw (1.8,0) -- (1.8,3.6);
\draw (2.7,0) -- (2.7,3.6);
\fill[gray!30] (0,2.7) rectangle (0.9,3.6);
\fill[gray!30] (0.9,1.8) rectangle (1.8,2.7);
\fill[gray!30] (1.8,0.9) rectangle (2.7,1.8);
\fill[gray!30] (2.7,0) rectangle (3.6,0.9);
\node[font=\footnotesize] at (0.45,3.15) {$A_{H_p}$};
\node[font=\footnotesize] at (1.35,2.25) {$A_{H_p}$};
\node[font=\footnotesize] at (2.25,1.35) {$A_{H_p}$};
\node[font=\footnotesize] at (3.15,0.45) {$A_{H_p}$};
\node[font=\footnotesize] at (2.25,3.15) {$J_p$};
\node[font=\footnotesize] at (3.15,3.15) {$J_p$};
\node[font=\footnotesize] at (2.25,2.25) {$J_p$};
\node[font=\footnotesize] at (3.15,2.25) {$J_p$};
\node[font=\footnotesize] at (0.45,1.35) {$J_p$};
\node[font=\footnotesize] at (1.35,1.35) {$J_p$};
\node[font=\footnotesize] at (0.45,0.45) {$J_p$};
\node[font=\footnotesize] at (1.35,0.45) {$J_p$};
\node[font=\footnotesize] at (1.35,3.15) {$0$};
\node[font=\footnotesize] at (0.45,2.25) {$0$};
\node[font=\footnotesize] at (3.15,1.35) {$0$};
\node[font=\footnotesize] at (2.25,0.45) {$0$};
\node[font=\footnotesize] at (1.8,-0.4) {$M_p=A_T\otimes J_p+I_4\otimes A_{H_p}$};
\end{scope}
\end{tikzpicture}
\caption{The lexicographic product $G_p=K_{2,2}[H_p]$ of Proposition~\ref{proposition:no-uniform-adjacency-stability}, with the four Paley blocks on the left and the block structure of its adjacency matrix on the right. In the Fourier conjugate $C_p$, every nonzero inner frequency contributes a scalar block $\lambda_j(H_p)I_4$, and only the zero inner frequency contributes the $4\times4$ block $K_p$ whose $2\times2$ part \eqref{equation:bad-two-by-two-block} cannot be approximated well by a partial permutation.}
\label{figure:lexicographic}
\end{figure}

For the calculation, use the Fourier conjugation $C_p=(U_4\otimes U_p)M_p(U_4\otimes U_p)^*$. The two-dimensional transform differs from $C_p$ by a column permutation, which preserves all the norms in question and the distance to weighted partial permutation matrices. Since $H_p$ is circulant and $U_pJ_pU_p^*=pE_{00}$, every nonzero inner frequency contributes the diagonal block $\lambda_j(H_p)I_4$, while the zero inner frequency contributes
$$
K_p=pU_4A_TU_4^*+dI_4.
$$
The two scalar blocks of $U_4A_TU_4^*$, at the frequencies $0$ and $2$, are $2$ and $0$, and on the frequencies $1,3$ its block is
$$
\begin{pmatrix}-1&i\\-i&-1\end{pmatrix}.
$$
Thus $K_p$ has the scalar blocks $2p+d$ and $d$ together with the block
\begin{equation}
\begin{pmatrix}d-p&ip\\-ip&d-p\end{pmatrix},
\label{equation:bad-two-by-two-block}
\end{equation}
and its eigenvalues are $2p+d$, $d$, $d$ and $d-2p$, so that
$$
\|K_p\|_*=4p+2d=5p-1
\text{ and }
L(K_p)=6p.
$$
It follows from \eqref{equation:paley-nonprincipal-energy-stability} that
$$
\mathcal E(G_p)=2(p-1)\sqrt p+5p-1
\text{ and }
L(C_p)=\mathcal E(G_p)+p+1,
$$
so the relative deficit is exactly
$$
\delta_p=\frac{p+1}{2(p-1)\sqrt p+5p-1}\longrightarrow0.
$$

For any prescribed support, the best Frobenius approximation keeps the original entries on that support. In the block \eqref{equation:bad-two-by-two-block} the off-diagonal entries have modulus $p$, which exceeds the diagonal modulus $p-d$, so the optimal partial permutation keeps the two off-diagonal entries and loses the squared norm $2(p-d)^2=\frac{(p+1)^2}{2}$. All other blocks are diagonal and can be retained completely, and allowing entries between distinct blocks cannot improve the approximation, because all such entries of $C_p$ are zero, so that removing them preserves feasibility and never increases the error. Hence
$$
\inf_W\|C_p-W\|_{\mathrm F}^2=\frac{(p+1)^2}{2},
$$
and together with \eqref{equation:lexicographic-frobenius} this proves that
$$
\inf_W\frac{\|C_p-W\|_{\mathrm F}^2}{\|C_p\|_{\mathrm F}^2}=\frac{(p+1)^2}{4p(5p-1)}\longrightarrow\frac{1}{20}.
$$
Finally, relabeling $T$ so that its bipartition consists of the even and odd residues makes it circulant on ${\mathbb Z}_4$. The same product construction is then a Cayley graph on ${\mathbb Z}_4\times{\mathbb Z}_p\cong{\mathbb Z}_{4p}$, and this labeling is sharp, which establishes the last assertion without changing the underlying graph.
\end{proof}

\section{Spectral projectors and harmonic collision dimension}
\label{section:projectors}

The same parameters apply to orthogonal projector kernels. For a graph Laplacian $L=D-A$, where $D$ is the diagonal matrix of vertex degrees and $A$ is the adjacency matrix, write $\Pi_\lambda$ for the orthogonal projector onto the eigenspace of $\lambda$, of rank $m(\lambda)$. If $u_1,\ldots,u_m$ is an orthonormal basis of that eigenspace, then
$$
\Pi_\lambda(x,y)=\sum_{j=1}^mu_j(x)\overline{u_j(y)},
$$
and we refer to \cite{Chung1997} for the graph Laplacian and its spectral decomposition. We regard this matrix as a function on ${\mathbb Z}_N^2$, so that $D_4(\Pi_\lambda)$ and $\operatorname{FR}(\Pi_\lambda)$ are defined exactly as for arbitrary functions above, and minimizing the latter over vertex labelings defines $\operatorname{FR}_{\min}(\Pi_\lambda)$.

The lower bound $\operatorname{FR}_{\min}(\Pi_\lambda)\geq\sqrt{m(\lambda)}$ and its attainment for cyclic Cayley graphs are from \cite{GuptaIosevich2026}. We refine it by collision dimension and prove equality and stability statements, and in fact the arguments apply to any nonzero orthogonal projector, whether or not it arises from a graph. As before, the two-dimensional transform is $\widehat\Pi=U\Pi U$, whereas $U\Pi U^*$ is Fourier conjugation, and the two differ by the column permutation $R_{a,b}=1_{\{a+b=0\}}$.

\begin{theorem}[Collision refinement of the projector multiplicity bound]
\label{theorem:projector-hierarchy}
Let $\Pi_\lambda$ be a Laplacian spectral projector of rank $m=m(\lambda)\geq1$. Then
\begin{equation}
m\leq D_4(\Pi_\lambda)\leq\operatorname{FR}(\Pi_\lambda)^2.
\label{equation:projector-hierarchy}
\end{equation}
Consequently, if $0<\eta<1$ and a trigonometric polynomial $P$ on ${\mathbb Z}_N^2$ uses at most $k$ frequencies and satisfies $\|\Pi_\lambda-P\|_2<\eta\|\Pi_\lambda\|_2$, then
\begin{equation}
k>\max\{(1-\eta^2)^2D_4(\Pi_\lambda),(1-\eta^2)m(\lambda)\}.
\label{equation:projector-approximation-lower}
\end{equation}
\end{theorem}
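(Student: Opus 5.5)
Writing $\Pi=\Pi_\lambda$, the plan is to compute the second moment of $\widehat\Pi$ exactly and bound its fourth moment using the matrix inequality \eqref{equation:entrywise-fourth-spectral} from the proof of Theorem~\ref{theorem:energy-spectral-cap}. Since $\Pi$ is an orthogonal projector of rank $m$, its singular values are $1$ with multiplicity $m$ and $0$ otherwise, and $\widehat\Pi=U\Pi U$ has the same singular values because $U$ is unitary. Hence Plancherel gives
$$
\|\widehat\Pi\|_2^2=\|\Pi\|_{\mathrm F}^2=\operatorname{tr}(\Pi^*\Pi)=\operatorname{tr}(\Pi)=m.
$$
Applying \eqref{equation:entrywise-fourth-spectral} to $B=\widehat\Pi$ then gives
$$
\|\widehat\Pi\|_4^4=\sum_{a,b}|\widehat\Pi_{ab}|^4\leq\operatorname{tr}((BB^*)^2)=\operatorname{tr}(\Pi^4)=\operatorname{tr}(\Pi)=m,
$$
where we use that $BB^*=U\Pi U U^*\Pi^* U^*=U\Pi U^*$, which is unitarily similar to $\Pi$ and idempotent. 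Therefore
$$
D_4(\Pi)=\frac{\|\widehat\Pi\|_2^4}{\|\widehat\Pi\|_4^4}\geq\frac{m^2}{m}=m.
$$
The second inequality $D_4\leq\operatorname{FR}^2$ is exactly Theorem~\ref{theorem:collision-hierarchy-general}. Nothing here uses the Laplacian beyond $\Pi$ being an orthogonal projector, in line with the paper's remark.

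For the approximation consequence, the bound $k>(1-\eta^2)^2D_4(\Pi)$ is \eqref{equation:necessary-k-general} from Theorem~\ref{theorem:collision-approximation-general}. For the rank term, I would run the argument from Theorem~\ref{theorem:spectral-best-k}. A trigonometric polynomial with $k$ frequencies on ${\mathbb Z}_N^2$ is a sum of $k$ rank-one matrices, so it has rank at most $k$. The best rank-$k$ argument there then gives
$$
\|\Pi-P\|_{\mathrm F}^2\geq\sum_{j>k}a_j^2=\max\{m-k,0\},
$$
since the singular values $a_j$ are $m$ ones followed by zeros. Combining this with $\|\Pi-P\|_2^2<\eta^2 m$ gives $m-k<\eta^2 m$, that is, $k>(1-\eta^2)m$. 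Taking the larger of the two lower bounds yields \eqref{equation:projector-approximation-lower}.

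I expect no serious obstacle; the only points needing care are the identifications involved. The first is that the function norm $\|\Pi\|_2$ equals the Frobenius norm. The second is that $\operatorname{tr}((BB^*)^2)=\operatorname{tr}(\Pi^4)$ holds even though $\widehat\Pi$ uses $U\Pi U$ rather than $U\Pi U^*$. Since $U$ is unitary, $BB^*=U\Pi\Pi^*U^*$ regardless of the right factor, so this is harmless. The third is that the strict inequality in the error hypothesis is what makes the rank bound strict.
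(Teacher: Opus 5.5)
Your proof is correct. Its skeleton matches the paper's: Plancherel gives $\|\widehat\Pi\|_2^2=m$, then a bound $\|\widehat\Pi\|_4^4\leq m$, then the collision hierarchy and Theorem~\ref{theorem:collision-approximation-general}. The two key estimates, however, are obtained differently.

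The paper uses a single elementary fact. Since $\|\widehat\Pi\|_{\mathrm{op}}=1$, every entry of $\widehat\Pi$ has modulus at most one. This gives $\|\widehat\Pi\|_4^4\leq\|\widehat\Pi\|_2^2=m$ at once. It also shows that any $k$ retained Fourier coefficients capture squared norm at most $k$, which yields the rank term through Proposition~\ref{proposition:exact-best-k-general}.

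You instead invoke the spectral fourth-moment inequality \eqref{equation:entrywise-fourth-spectral} and the best rank-$k$ argument of Theorem~\ref{theorem:spectral-best-k}. The paper's route is shorter and self-contained. The same entry bound is also what drives the equality characterization in Proposition~\ref{proposition:projector-equality-characterization}, where $D_4(\Pi)=m$ becomes the statement that every entry has modulus $0$ or $1$.

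Your route uses more machinery but gives a bit more. The rank lower bound holds against every matrix of rank at most $k$, not only against $k$-frequency trigonometric polynomials. In addition, the equality case of \eqref{equation:entrywise-fourth-spectral} immediately identifies $\widehat\Pi$ as a weighted partial permutation matrix when $D_4(\Pi)=m$. That leads to the same equality characterization by a different path.
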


\begin{proof}
The transformed projector has the same singular values as $\Pi_\lambda$, so $\|\widehat\Pi_\lambda\|_2^2=m$ and $\|\widehat\Pi_\lambda\|_{\mathrm{op}}=1$. In particular every entry has modulus at most one, so that $\|\widehat\Pi_\lambda\|_4^4\leq\|\widehat\Pi_\lambda\|_2^2=m$ and hence $D_4(\Pi_\lambda)\geq m$. The collision hierarchy supplies the upper inequality and its approximation obstruction, and since retaining $k$ coefficients captures squared norm at most $k$, an error strictly below $\eta\sqrt m$ also requires $k>(1-\eta^2)m$.
\end{proof}

\begin{proposition}[Equality and cyclic translation invariance]
\label{proposition:projector-equality-characterization}
Let $\Pi$ be a nonzero orthogonal projector of rank $m$, regarded as a kernel on ${\mathbb Z}_N^2$. For this fixed labeling, the following conditions are equivalent.
\begin{enumerate}
\item $D_4(\Pi)=m$.
\item $\operatorname{FR}(\Pi)^2=m$.
\item $\widehat\Pi$ has exactly $m$ nonzero entries, all of modulus one, in distinct rows and columns.
\item $U\Pi U^*$ is diagonal, with $m$ ones and $N-m$ zeros.
\item The range of $\Pi$ is spanned by a subset of the cyclic Fourier characters.
\item $\Pi$ is circulant, or equivalently its range is invariant under cyclic translation.
\end{enumerate}
Whenever these conditions hold, for every $0<\eta<1$ we have
$$
m=D_4(\Pi)=\operatorname{FR}(\Pi)^2=|\operatorname{supp}(\widehat\Pi)|
\text{ and }
\kappa_\eta(\Pi)=\lceil(1-\eta^2)m\rceil.
$$
\end{proposition}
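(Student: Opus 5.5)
I would prove the equivalences by a cycle through (3), running (1)$\Rightarrow$(3)$\Rightarrow$(4)$\Rightarrow$(5)$\Rightarrow$(6)$\Rightarrow$(4)$\Rightarrow$(3)$\Rightarrow$(2)$\Rightarrow$(1). Throughout, write $B=\widehat\Pi=U\Pi U$ and $C=U\Pi U^*$. These differ by the column permutation $R$, and both are unitarily equivalent to $\Pi$ up to that permutation. So $\|B\|_{\mathrm F}^2=m$, $\|B\|_{\mathrm{op}}=1$, every entry of $B$ has modulus at most one, and $B B^*=C C^*=C$ is again a rank-$m$ projector.

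For (1)$\Rightarrow$(3), I would follow the proof of Theorem~\ref{theorem:projector-hierarchy}. There we have $\sum|B_{ij}|^4\leq\sum|B_{ij}|^2=m$, with equality forcing every nonzero entry to have modulus exactly one. Since $\|B\|_{\mathrm F}^2=m$, there are then exactly $m$ such entries. Each row of $B$ has squared norm $(BB^*)_{ii}=C_{ii}\le1$, because $C$ is a projector, so a row cannot contain two unimodular entries. The same argument applies to columns via $B^*B$, which is unitarily a projector as well. This gives (3).

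For (3)$\Rightarrow$(4), multiply by $R$. The matrix $C$ then has $m$ unimodular entries in distinct rows and columns, so $C$ is a weighted partial permutation matrix. Because $C$ is Hermitian and idempotent, we have $C^2=CC^*$, and $CC^*$ is diagonal for a weighted partial permutation. Hence $C=C^2$ is diagonal with entries in $\{0,1\}$, and there are $m$ ones. Then (4)$\Rightarrow$(5) is immediate: $\Pi=U^*CU$ is the projector onto the span of the characters indexed by the ones. For (5)$\Rightarrow$(6), each character is an eigenvector of every translation, so the range is translation invariant. Since $\Pi$ is orthogonal, it commutes with the unitary translations, which means it is circulant. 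For (6)$\Rightarrow$(4), a circulant matrix is diagonalized by $U$, and a diagonal projector has diagonal entries in $\{0,1\}$. Finally, (4) gives (3) directly, and (3) gives $\operatorname{FR}(\Pi)^2=m^2/m=m$, which is (2). For (2)$\Rightarrow$(1), use $m\le D_4\le\operatorname{FR}^2=m$.

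The closing formulas follow from (3): $D_4=\operatorname{FR}^2=|\operatorname{supp}\widehat\Pi|=m$. By Proposition~\ref{proposition:exact-best-k-general}, $\sigma_k^2=m-k$, and the condition $m-k\le\eta^2m$ gives $\kappa_\eta=\lceil(1-\eta^2)m\rceil$.

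The main obstacle is the step (1)$\Rightarrow$(3), specifically excluding two unimodular entries in the same row or column. I handle it with the row-norm bound from the projector identity $BB^*=C$. This works because $C$ is itself an orthogonal projector, so its diagonal entries are at most one.
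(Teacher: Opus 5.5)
Your proposal is correct and follows essentially the same route as the paper. Bounding the entries of $\widehat\Pi$ and its row and column norms by one gives (1)$\Leftrightarrow$(3), the projector identity $C=C^2=CC^*$ forces a weighted partial permutation $C$ to be diagonal, and the remaining equivalences come from the Fourier diagonalization of circulants. The only cosmetic difference is in deducing circulance from translation invariance of the range: you note that an orthogonal projector onto a subspace invariant under a unitary commutes with that unitary, whereas the paper writes the individual character projections as polynomials in the translation operator.
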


\begin{proof}
Every entry of $\widehat\Pi$ has modulus at most one, and its squared Frobenius norm is $m$. Thus $D_4(\Pi)=m$ holds exactly when each entry has modulus zero or one, in which case there must be $m$ entries of modulus one, and since every row and column has Euclidean norm at most the operator norm, which is one, these entries occupy distinct rows and columns. This proves the equivalence of the first and third conditions. The third implies $\|\widehat\Pi\|_1=m$ and hence the second, and conversely the second implies the first by the collision hierarchy.

It remains to use the additional fact that the kernel is a projector. Let $C=U\Pi U^*$ and $R_{a,b}=1_{\{a+b=0\}}$. The Fourier matrix satisfies $U^2=R$, so $\widehat\Pi=CR$, and $C$ differs from $\widehat\Pi$ only by a column permutation. Moreover, $C$ is an orthogonal projector, so its diagonal entries satisfy
$$
C_{ii}=(C^2)_{ii}=\sum_j|C_{ij}|^2.
$$
If $C$ has at most one nonzero entry in each row, a nonzero off-diagonal entry would force the diagonal entry in that row to vanish, contradicting this identity, so $C$ is diagonal, and its diagonal entries are zero or one because $C^2=C$, which proves the fourth condition. Conversely, a diagonal $C$ with $m$ ones makes $CR$ satisfy the third condition.

The fourth and fifth conditions are equivalent by the definition of the Fourier matrix. If the character set is $\Gamma$, then
$$
\Pi(x,y)=\frac{1}{N}\sum_{\xi\in\Gamma}e^{\frac{2\pi i\xi(x-y)}{N}},
$$
which depends only on $x-y$ and is therefore circulant, and conversely circulant matrices are diagonalized by the Fourier basis. Finally, cyclic translation has the distinct Fourier characters as its eigenvectors, with distinct eigenvalues, and a translation-invariant subspace is spanned by a subset of these eigenvectors, because the projections onto individual characters are polynomials in the translation operator and preserve the subspace. This proves the last equivalence.

Under these conditions there are exactly $m$ nonzero Fourier coefficients of equal modulus, and retaining $k$ of them leaves the squared relative error $1-\frac{k}{m}$ for $0\leq k\leq m$, which proves the formula for $\kappa_\eta$.
\end{proof}

For a rank-one projector $\Pi=uu^*$ with $\|u\|_2=1$ and $v=Uu$, we have
$$
D_4(\Pi)=\left(\sum_k|v_k|^4\right)^{-2},
$$
so the equality $D_4(\Pi)=1$ means concentration on one Fourier mode. More generally, $D_4(\Pi)=m$ means that the Fourier transform of the projector has the smallest possible support, and the Fourier characters spanning its range each have constant modulus in vertex coordinates.

The proof of the equality criterion suggests a quantitative statement. We keep the $m$ Fourier coordinates on which the diagonal of $U\Pi U^*$ is largest, and this produces a nearby circulant projector, which need not belong to the spectral decomposition of the original graph.

\begin{theorem}[Stability of the projector collision bound]
\label{theorem:projector-stability}
Let $\Pi$ be a rank-$m$ orthogonal projector on ${\mathbb C}^N$ with $m\geq1$. There exists a circulant rank-$m$ orthogonal projector $Q$ such that
\begin{equation}
\frac{\|\Pi-Q\|_{\mathrm F}^2}{\|\Pi\|_{\mathrm F}^2}\leq2\left(1-\frac{m}{D_4(\Pi)}\right).
\label{equation:projector-stability}
\end{equation}
In particular, if $\delta\geq0$ and $D_4(\Pi)\leq(1+\delta)m$, then
$$
\|\Pi-Q\|_{\mathrm F}\leq\sqrt{\frac{2\delta}{1+\delta}}\|\Pi\|_{\mathrm F}.
$$
\end{theorem}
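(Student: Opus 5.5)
The plan is to work in the Fourier-conjugated picture, which makes everything explicit. Put $C=U\Pi U^*$. This is again a rank-$m$ orthogonal projector, so it is positive semidefinite, satisfies $C^2=C$, and has $0\leq C_{ii}\leq1$ and $\sum_iC_{ii}=\operatorname{tr}C=m$. As in the proof of Proposition~\ref{proposition:projector-equality-characterization}, $\widehat\Pi=CR$ differs from $C$ only by a column permutation. Hence
$$
\|\widehat\Pi\|_4^4=\sum_{i,j}|C_{ij}|^4
\text{ and }
D_4(\Pi)=\frac{m^2}{\sum_{i,j}|C_{ij}|^4}.
$$
For $Q$ we take the candidate suggested before the statement. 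Let $S$ be a set of $m$ indices carrying the $m$ largest diagonal entries $C_{ii}$, let $D_S$ be the diagonal $0$–$1$ matrix supported on $S$, and put $Q=U^*D_SU$. This $Q$ is a circulant rank-$m$ orthogonal projector by the equivalence of the fourth and sixth conditions in Proposition~\ref{proposition:projector-equality-characterization}.

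Unitary invariance of the Frobenius norm then gives
$$
\|\Pi-Q\|_{\mathrm F}^2=\|C-D_S\|_{\mathrm F}^2=\|C\|_{\mathrm F}^2+m-2\operatorname{Re}\operatorname{tr}(CD_S)=2\Big(m-\sum_{i\in S}C_{ii}\Big).
$$
Dividing by $\|\Pi\|_{\mathrm F}^2=m$, the claimed inequality \eqref{equation:projector-stability} becomes equivalent to
$$
\sum_{i\in S}C_{ii}\geq\frac{m^2}{D_4(\Pi)}=\sum_{i,j}|C_{ij}|^4 .
$$
This is the key step, and I would establish it through the following chain.

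The first link uses positive semidefiniteness: $|C_{ij}|^2\leq C_{ii}C_{jj}\leq C_{ii}$. The second link uses the projector identity $\sum_j|C_{ij}|^2=(C^2)_{ii}=C_{ii}$. Combining the two,
$$
\sum_{i,j}|C_{ij}|^4\leq\sum_iC_{ii}\sum_j|C_{ij}|^2=\sum_iC_{ii}^2 .
$$
The final link is the elementary fact that $\sum_iw_iC_{ii}\leq\sum_{i\in S}C_{ii}$ for any weights $w_i\in[0,1]$ with $\sum_iw_i=m$, since these are linear maximization constraints whose optimum puts full weight on the $m$ largest entries. Taking $w_i=C_{ii}$ gives $\sum_iC_{ii}^2\leq\sum_{i\in S}C_{ii}$, which completes the chain.

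The \emph{in particular} statement is then a rearrangement. If $D_4(\Pi)\leq(1+\delta)m$, then $1-\frac{m}{D_4(\Pi)}\leq\frac{\delta}{1+\delta}$, and taking square roots yields the stated bound.

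I expect the main obstacle to be the key inequality in the second paragraph, namely choosing how to bound the fourth moment by diagonal data. The point is to use both the positivity bound $|C_{ij}|^2\leq C_{ii}C_{jj}$ and the projector identity $\sum_j|C_{ij}|^2=C_{ii}$ together. Neither the bound $\|\widehat\Pi\|_{\mathrm{op}}\leq1$ alone nor Theorem~\ref{theorem:nuclear-stability} seems to give the constant $2$ with the clean factor $1-\frac{m}{D_4}$.
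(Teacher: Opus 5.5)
Your proof is correct and follows the paper's argument essentially step for step. You use the same conjugated projector $C=U\Pi U^*$, the same circulant $Q=U^*D_SU$ built from the $m$ largest diagonal entries, the identity $\|\Pi-Q\|_{\mathrm F}^2=2(m-\sum_{i\in S}C_{ii})$, and the chain $\sum_{i,j}|C_{ij}|^4\leq\sum_iC_{ii}^2\leq\sum_{i\in S}C_{ii}$. The paper proves the last link by an explicit threshold argument, which is your linear-maximization fact written out; and positive semidefiniteness is not needed in the first link, since $|C_{ij}|^2\leq\sum_k|C_{ik}|^2=C_{ii}$ follows from the projector identity alone.
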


\begin{proof}
Again put $C=U\Pi U^*$ and write $q_i=C_{ii}$. Since $C$ is a rank-$m$ orthogonal projector,
$$
0\leq q_i\leq1,\sum_iq_i=m
\text{ and }
\sum_j|C_{ij}|^2=q_i,
$$
and consequently
\begin{equation}
\sum_{i,j}|C_{ij}|^4\leq\sum_iq_i^2.
\label{equation:projector-diagonal-fourth}
\end{equation}
Choose a set $I$ of $m$ indices with the largest $q_i$. Then
\begin{equation}
\sum_iq_i^2\leq\sum_{i\in I}q_i.
\label{equation:projector-rounding}
\end{equation}
To verify this, note that the case $m=N$ is immediate, and otherwise choose a number $a$ between the largest $q_i$ outside $I$ and the smallest $q_i$ in $I$. The identity $\sum_{i\notin I}q_i=\sum_{i\in I}(1-q_i)$ yields
$$
\sum_{i\notin I}q_i^2\leq a\sum_{i\notin I}q_i=a\sum_{i\in I}(1-q_i)\leq\sum_{i\in I}q_i(1-q_i),
$$
which is \eqref{equation:projector-rounding}.

Let $D$ be the diagonal projector with ones at the indices in $I$, and set $Q=U^*DU$, which is a circulant rank-$m$ projector. Unitary invariance of the Frobenius norm yields
$$
\|\Pi-Q\|_{\mathrm F}^2=\|C-D\|_{\mathrm F}^2=2m-2\sum_{i\in I}q_i,
$$
and the two preceding inequalities therefore imply
$$
\|\Pi-Q\|_{\mathrm F}^2\leq2m-2\sum_{i,j}|C_{ij}|^4=2m-\frac{2m^2}{D_4(\Pi)},
$$
where the last identity uses the fact that $C$ and $\widehat\Pi$ differ by a column permutation. Dividing by $\|\Pi\|_{\mathrm F}^2=m$ proves \eqref{equation:projector-stability}, and the asserted bound under $D_4(\Pi)\leq(1+\delta)m$ follows at once.
\end{proof}

\begin{remark}
The lower bound for the projector Fourier ratio is from \cite{GuptaIosevich2026}. Collision dimension provides the intermediate quantity in \eqref{equation:projector-hierarchy}, and its deficit controls the distance to a circulant projector. Translation invariance of one eigenspace need not imply translation invariance of the other eigenspaces, so the equality criterion for one projector does not characterize cyclic Cayley graphs.
\end{remark}

We can also recover the centered adjacency matrix from the nonconstant Laplacian eigenspaces.

\begin{proposition}[Centered adjacency and Laplacian projectors]
\label{proposition:centered-harmonic-decomposition}
Let $G$ be a connected $d$-regular graph with $N\geq2$ vertices. For each distinct Laplacian eigenvalue $\lambda$, let $\Pi_\lambda$ be the corresponding spectral projector and let $m(\lambda)$ be its multiplicity. Then
\begin{equation}
A-\frac{d}{N}J=\sum_{\lambda\neq0}(d-\lambda)\Pi_\lambda,
\label{equation:centered-harmonic-decomposition}
\end{equation}
and consequently
\begin{equation}
\operatorname{FR}^{\circ}(G,\pi)\leq\frac{\sum_{\lambda\neq0}|d-\lambda|\sqrt{m(\lambda)}\operatorname{FR}(\Pi_\lambda)}{\left(\sum_{\lambda\neq0}(d-\lambda)^2m(\lambda)\right)^{\frac{1}{2}}}.
\label{equation:centered-fr-projector-bound}
\end{equation}
\end{proposition}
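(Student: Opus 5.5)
The decomposition \eqref{equation:centered-harmonic-decomposition} comes from the spectral theorem for the Laplacian, and the Fourier ratio bound \eqref{equation:centered-fr-projector-bound} then follows from the triangle inequality for $\|\cdot\|_1$ of the Fourier transform together with orthogonality of the projectors in $\ell^2$.

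\textbf{Step 1: the decomposition.} Since $G$ is $d$-regular, $L=dI-A$, so $A=dI-L=\sum_\lambda(d-\lambda)\Pi_\lambda$, where the sum runs over all distinct Laplacian eigenvalues and we use $\sum_\lambda\Pi_\lambda=I$. Connectedness makes $\lambda=0$ simple with eigenvector the constant vector. Hence $\Pi_0=\frac{1}{N}J$, and its term contributes $d\Pi_0=\frac{d}{N}J$. Moving this term to the left gives \eqref{equation:centered-harmonic-decomposition}. This holds for any labeling, since relabeling conjugates every term by the same permutation matrix. Because $2s=Nd$, the edge density is $\frac{d}{N}$, so the left side is exactly the centered function $g_\pi$.

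\textbf{Step 2: the numerator.} Apply the unitary two-dimensional Fourier transform, which is linear, so $\widehat g_\pi=\sum_{\lambda\ne0}(d-\lambda)\widehat\Pi_\lambda$. By the triangle inequality,
$$
\|\widehat g_\pi\|_1\leq\sum_{\lambda\neq0}|d-\lambda|\,\|\widehat\Pi_\lambda\|_1=\sum_{\lambda\neq0}|d-\lambda|\operatorname{FR}(\Pi_\lambda)\|\Pi_\lambda\|_2.
$$
Each $\Pi_\lambda$ is an orthogonal projector of rank $m(\lambda)$, so $\|\Pi_\lambda\|_2=\|\Pi_\lambda\|_{\mathrm F}=\sqrt{m(\lambda)}$. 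This gives the numerator of \eqref{equation:centered-fr-projector-bound}.

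\textbf{Step 3: the denominator.} The projectors are mutually orthogonal in the Frobenius inner product, since $\operatorname{tr}(\Pi_\lambda\Pi_\mu^*)=\operatorname{tr}(\Pi_\lambda\Pi_\mu)=0$ for $\lambda\ne\mu$. Therefore
$$
\|g_\pi\|_2^2=\sum_{\lambda\ne0}(d-\lambda)^2m(\lambda).
$$
Plancherel then gives $\|\widehat g_\pi\|_2=\|g_\pi\|_2$, and dividing the bound of Step 2 by this quantity yields \eqref{equation:centered-fr-projector-bound}.

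\textbf{Remaining checks.} We need $g_\pi\ne0$ so that $\operatorname{FR}^\circ$ is defined. This holds because $N\geq2$ and connectedness give at least one edge, while the diagonal of $A$ vanishes and $\frac{d}{N}>0$. Terms with $\lambda=d$ contribute zero to both the numerator and the denominator, and they may be kept or dropped without changing anything. The argument involves no real obstacle. The one point needing care is the identification $\Pi_0=\frac{1}{N}J$, which is exactly where connectedness is used.
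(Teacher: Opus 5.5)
Your proposal is correct and matches the paper's proof essentially step for step. Both arguments use the spectral decomposition of $A=dI-L$ with $\Pi_0=\frac{1}{N}J$ from connectedness, the triangle inequality for the Fourier $\ell^1$ norm with $\|\widehat\Pi_\lambda\|_2=\sqrt{m(\lambda)}$, and Frobenius orthogonality of the projectors for the denominator; your added check that $g_\pi\neq0$, via its diagonal value $-\frac{d}{N}$, is a harmless detail the paper leaves implicit.
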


\begin{proof}
Since $L=dI-A$ and $L=\sum_\lambda\lambda\Pi_\lambda$, the spectral decomposition of $A=dI-L$ is $A=\sum_\lambda(d-\lambda)\Pi_\lambda$. For every graph the constant vector lies in the zero eigenspace of $L$, and connectedness implies that this zero eigenspace is one-dimensional, so its orthogonal projector is $\Pi_0=\frac{J}{N}$. The coefficient of this projector in the preceding sum is $d$, which proves \eqref{equation:centered-harmonic-decomposition}. Taking Fourier transforms and using the triangle inequality,
$$
\left\|\widehat{A-\frac{d}{N}J}\right\|_1\leq\sum_{\lambda\neq0}|d-\lambda|\|\widehat\Pi_\lambda\|_1,
$$
and since $\|\widehat\Pi_\lambda\|_2=\sqrt{m(\lambda)}$ we have $\|\widehat\Pi_\lambda\|_1=\sqrt{m(\lambda)}\operatorname{FR}(\Pi_\lambda)$. For distinct eigenvalues $\lambda$ and $\mu$ the projectors have orthogonal ranges and satisfy $\Pi_\lambda\Pi_\mu=0$, so they are orthogonal in the Frobenius inner product, and since $\|\Pi_\lambda\|_2^2=m(\lambda)$,
$$
\left\|A-\frac{d}{N}J\right\|_2^2=\sum_{\lambda\neq0}(d-\lambda)^2m(\lambda).
$$
Dividing the preceding $\ell^1$ estimate by the square root of this identity proves \eqref{equation:centered-fr-projector-bound}.
\end{proof}

For a cyclic Cayley graph, every Laplacian eigenspace is spanned by cyclic Fourier characters, so the projector inequalities are equalities, and the corresponding Fourier-ratio identity was proved in \cite{GuptaIosevich2026}.

\begin{theorem}[Collision dimension of cyclic Cayley projectors]
\label{theorem:cayley-projector-exact}
Let $G=\operatorname{Cay}({\mathbb Z}_N,S)$, put $d=|S|$, and let $\lambda$ be a Laplacian eigenvalue. The adjacency eigenvalue at frequency $\xi$ is $\theta_\xi$ from \eqref{equation:cayley-adjacency-eigenvalue}, and the corresponding Laplacian eigenvalue is
$$
\mu_\xi=d-\theta_\xi=d-\sum_{t\in S}e^{\frac{2\pi i\xi t}{N}}.
$$
Define $S_\lambda=\{\xi\in{\mathbb Z}_N:\mu_\xi=\lambda\}$. Then $m(\lambda)=|S_\lambda|$, and under the natural group labeling
\begin{equation}
D_4(\Pi_\lambda)=\operatorname{FR}(\Pi_\lambda)^2=m(\lambda).
\label{equation:cayley-projector-equality}
\end{equation}
\end{theorem}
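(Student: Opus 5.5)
The plan is to reduce the statement to Proposition~\ref{proposition:projector-equality-characterization}. Its fifth condition says the range is spanned by cyclic Fourier characters, and that condition already implies all the asserted equalities. So the work consists of identifying the eigenspace of $\lambda$ with $\operatorname{span}\{\chi_\xi:\xi\in S_\lambda\}$.

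First, under the natural labeling the computation preceding \eqref{equation:cayley-adjacency-eigenvalue} shows $A\chi_\xi=\theta_\xi\chi_\xi$ for every $\xi$. Since $G$ is $d$-regular, $L=dI-A$, so
$$
L\chi_\xi=(d-\theta_\xi)\chi_\xi=\mu_\xi\chi_\xi .
$$
Each $\mu_\xi$ is real because $S=-S$. The normalized characters $\chi_\xi$, $\xi\in{\mathbb Z}_N$, form an orthonormal basis of ${\mathbb C}^N$, so they give a complete orthonormal eigenbasis for $L$. Hence the $\lambda$-eigenspace of $L$ is exactly the span of $\{\chi_\xi:\mu_\xi=\lambda\}=\{\chi_\xi:\xi\in S_\lambda\}$. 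The point to justify here is that no other vectors can contribute: a vector in the $\lambda$-eigenspace, expanded in this eigenbasis, has zero coefficient on every $\chi_\xi$ with $\mu_\xi\neq\lambda$. This gives $m(\lambda)=|S_\lambda|$, and $m(\lambda)\geq1$ because $\lambda$ is assumed to be an eigenvalue.

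Second, this means $\Pi_\lambda$ satisfies condition (5) of Proposition~\ref{proposition:projector-equality-characterization}, with rank $m=m(\lambda)$. That proposition then yields $D_4(\Pi_\lambda)=\operatorname{FR}(\Pi_\lambda)^2=m(\lambda)$, which is \eqref{equation:cayley-projector-equality}.

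As a cross-check, one can compute directly:
$$
\Pi_\lambda(x,y)=\frac1N\sum_{\xi\in S_\lambda}e^{\frac{2\pi i\xi(x-y)}{N}} .
$$
Its two-dimensional transform is supported on $\{(-\xi,\xi):\xi\in S_\lambda\}$, with modulus $1$ at each of these points. This gives $\|\widehat\Pi_\lambda\|_1=\|\widehat\Pi_\lambda\|_2^2=\|\widehat\Pi_\lambda\|_4^4=m(\lambda)$. I expect no real obstacle; the only care needed is the completeness argument in the first step, which ensures the eigenspace contains no vectors beyond the span of those characters.
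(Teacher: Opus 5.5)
Your proposal is correct and is essentially the paper's argument: the paper also identifies the $\lambda$-eigenspace with the span of the characters indexed by $S_\lambda$ (your completeness argument supplies the justification it leaves implicit), then performs exactly your cross-check computation of $\widehat\Pi_\lambda$, while your main route through condition (5) of Proposition~\ref{proposition:projector-equality-characterization} rests on the same circulant formula. One cosmetic point: under the paper's sign convention the support is $\{(\xi,-\xi):\xi\in S_\lambda\}$, which agrees with your set because $\theta_{-\xi}=\overline{\theta_\xi}=\theta_\xi$ forces $S_\lambda=-S_\lambda$.
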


\begin{proof}
The eigenspace is spanned by the characters indexed by $S_\lambda$, so
$$
\Pi_\lambda(x,y)=\frac{1}{N}\sum_{\xi\in S_\lambda}e^{\frac{2\pi i\xi(x-y)}{N}}
\text{ and }
\widehat\Pi_\lambda(m,n)=1_{\{m\in S_\lambda\}}1_{\{n=-m\}}.
$$
There are exactly $m(\lambda)$ coefficients, each equal to one, so both ratios equal $m(\lambda)$ after the Fourier ratio is squared, as asserted.
\end{proof}

\begin{example}
For the cycle $C_N$ with $N\geq5$, the Laplacian eigenvalues are $2-2\cos(\frac{2\pi k}{N})$. Except for the frequency $k=0$ and, when $N$ is even, the frequency $k=\frac{N}{2}$, the eigenspaces have multiplicity two, corresponding to the pair of frequencies $\{k,-k\}$, so $D_4(\Pi_\lambda)=\operatorname{FR}(\Pi_\lambda)^2=2$ for the multiplicity-two eigenspaces, while both quantities equal one for the exceptional one-dimensional eigenspaces. Thus, for $N\geq5$, the edge relation of $C_N$ has collision dimension $\frac{2N}{3}$, while every individual Laplacian spectral projector has collision dimension at most two. The restriction $N\geq5$ excludes the exceptional graph $C_4$, whose edge collision dimension is $2$ rather than $\frac{8}{3}$.
\end{example}

\begin{example}
The Laplacian of $K_N$ has eigenvalue zero with multiplicity one and eigenvalue $N$ with multiplicity $N-1$. The projector onto the nonconstant eigenspace is $\Pi_N=I-\frac{1}{N}J$, where $I$ is the identity matrix and $J$ is the matrix all of whose entries are one, and the centered adjacency matrix from Example \ref{example:complete-graph-centered} is $\frac{1}{N}J-I=-\Pi_N$. Therefore
$$
D_4^{\circ}(K_N)=D_4(\Pi_N)=N-1,
$$
which is the spectral explanation of the centered calculation.
\end{example}

\begin{example}
Let $G=K_{d,d}$. Its Laplacian eigenvalues are $0$ and $2d$, together with $d$ when $d\geq2$, and the eigenspace for $2d$ is one-dimensional, so the formula \eqref{equation:centered-harmonic-decomposition} reduces to
$$
A-\frac{1}{2}J=-d\Pi_{2d}.
$$
Under the parity labeling, $\widehat\Pi_{2d}$ is supported at one frequency, hence
$$
D_4^{\circ}(K_{d,d})=D_4(\Pi_{2d})=\operatorname{FR}(\Pi_{2d})^2=1,
$$
and the one-dimensional centered structure found in Example \ref{example:complete-bipartite-centered} is exactly the top Laplacian eigenspace.
\end{example}

\section{Further questions}
\label{section:open-problems}

The odd prime-power classification leaves open the corresponding question at other orders.

\begin{question}[Classification at general orders]
Can the Boolean constraints in the normal form of Section~\ref{section:classification} be replaced by a structural description of all sharp graphs? In particular, what is the classification at powers of two and at orders with several distinct prime divisors?
\end{question}

The $15$-vertex example shows that a connected sharp graph with nonsingular adjacency matrix need not be vertex-transitive.

\begin{question}[Effective optimization]
For simple unweighted graphs, what is the computational complexity of deciding whether a sharp cyclic labeling exists, and what is the complexity of approximating $\operatorname{FR}_{\min}(G)$ or $E_{\mathrm{edge}}^{\max}(G)$ to prescribed accuracy?
\end{question}

Gupta and Iosevich \cite{GuptaIosevichWeighted2026} proved that approximating weighted edge complexity within additive error $\frac{1}{256N^{\frac{7}{2}}}$ is NP-hard, even on connected weighted graphs of odd order $N$, but their theorem does not settle the questions above for simple unweighted graphs. It would also be useful to bound the gap in the spectral cap for graph classes that admit no sharp labeling.

\begin{question}[Stability under additional hypotheses]
Which spectral restrictions give a dimension-free adjacency stability estimate? When can the inverse transform of an approximating weighted partial permutation matrix be replaced by the adjacency matrix of a nearby simple graph?
\end{question}

Section~\ref{section:adjacency-stability} rules out a bound depending only on the relative graph-energy deficit for arbitrary regular graphs, whereas for an orthogonal projector Theorem~\ref{theorem:projector-stability} provides such a bound, although its approximating circulant projector need not be a spectral projector of the original graph.

\begin{question}[From directed edges to graphs and observations]
Which additional hypotheses allow a large subset of the directed edge set with small difference set to yield information about vertex subsets or induced subgraphs? For a specified graph class and observation model, what observation lower bounds follow from a large necessary Fourier model size?
\end{question}

The Balog--Szemer\'edi--Gowers theorem controls a subset of directed edges and does not specify the remaining edges. One can prescribe the edges between two disjoint intervals of labels of the same length and choose the other edges at random. The prescribed directed edge set has a difference set of comparable size and remains in the final edge relation, so its presence alone carries little information about the rest of the graph.

At positive density, a bounded difference ratio alone is a weak condition, since every set $B\subset{\mathbb Z}_N^2$ of density $\beta>0$ satisfies
$$
|B-B|\leq N^2=\frac{|B|}{\beta}.
$$
Theorem~\ref{theorem:graph-bsg} is more informative for sparse graphs near the uncertainty scale $\alpha\operatorname{FR}^2=O(1)$, and to obtain a statement about induced subgraphs one must also control how the selected edges meet at their endpoints.

\appendix

\section{The quantitative Balog--Szemer\'edi--Gowers theorem}
\label{appendix:bsg}

We record the form of the Balog--Szemer\'edi--Gowers theorem used in Section \ref{section:inverse}, which is Theorem 1.2 of Reiher and Schoen \cite{ReiherSchoen2024}.

\begin{theorem}[Reiher--Schoen]
\label{theorem:bsg}
Let $A$ be a nonempty finite subset of an abelian group, and suppose that $K\geq1$, $0<\varepsilon<\frac{1}{2}$, and
$$
E_+(A)\geq\frac{|A|^3}{K}.
$$
Then there exists a subset $A'\subset A$ such that
$$
|A'|\geq(1-\varepsilon)K^{-\frac{1}{2}}|A|
\text{ and }
|A'-A'|\leq2^{33}\varepsilon^{-9}K^4|A'|.
$$
\end{theorem}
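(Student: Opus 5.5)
This theorem is quoted directly from Reiher and Schoen, so the plan is to import their argument rather than build a new one, following the standard graph-theoretic route to Balog--Szemer\'edi--Gowers.

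Write $r(x)$ for the number of pairs $(a,b)\in A^2$ with $a-b=x$. Then $E_+(A)=\sum_x r(x)^2$, so the hypothesis says that the set of ``popular'' differences carries a large share of the energy. A dyadic pigeonholing over the values of $r$ would normally lose a logarithm. Reiher and Schoen avoid it by working with the weighted graph directly: put an edge $\{a,b\}$ with weight $r(a-b)$, and use the Cauchy--Schwarz bound
\[
\sum_{a,b}r(a-b)\geq\frac{E_+(A)}{|A|}\cdot\frac{1}{\max r}
\]
only as a heuristic, not as an actual step.

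The core step is a path-counting lemma. In a graph on $A$ with edge density about $K^{-1}$, we look for a subset $A'$ of size close to $K^{-1/2}|A|$ in which every pair $a,a'\in A'$ is joined by many paths of length three through popular differences. We obtain $A'$ by the dependent random choice step: pick $v$ at random, take its neighbourhood, and delete the vertices that have few common neighbours with too many others. Reiher and Schoen's improvement is a careful choice of this neighbourhood, which is what gives the factor $(1-\varepsilon)K^{-1/2}$. Once every pair in $A'$ is joined by $\gg\varepsilon^{c}K^{-c}|A|^2$ such paths, each difference $a-a'$ can be written in $\gg\varepsilon^{c}K^{-c}|A|^2$ ways as $x-y+z$ with $x,y,z$ popular differences. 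Counting representations against $|A|^5$, or bounding the number of popular differences by about $K|A|$, then gives $|A'-A'|\ll\varepsilon^{-9}K^4|A'|$.

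The main obstacle is tracking the constants $2^{33}$ and $\varepsilon^{-9}$ together with the sharp size exponent $K^{-1/2}$. Reproducing these exactly would mean redoing their entire optimization, so we would cite \cite[Theorem 1.2]{ReiherSchoen2024} verbatim. We would also check that their normalization of $E_+$ counts ordered quadruples exactly as in our definition, and that their statement is for arbitrary abelian groups, so that it applies to $A_\pi\subset{\mathbb Z}_N^2$.
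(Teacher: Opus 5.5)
Your plan matches the paper: it gives no proof of its own and simply records the statement as Theorem 1.2 of Reiher and Schoen \cite{ReiherSchoen2024}, which is what you propose to cite verbatim. Your checks are the right ones, namely that their energy counts ordered quadruples as $E_+$ does here and that the result holds in arbitrary abelian groups, so it applies to $A_\pi\subset{\mathbb Z}_N^2$; the heuristic sketch is unnecessary and should not be offered as an argument, since for instance your displayed inequality is vacuous, its left side being exactly $E_+(A)$.
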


The theorem strengthens the original result of Balog and Szemer\'edi \cite{BalogSzemeredi1994} and the quantitative refinement from Gowers's work \cite{Gowers1998}.


\begin{thebibliography}{99}

\bibitem{AldalehEtAl2025}
K. Aldaleh, W. Burstein, G. Garza, G. Hart, A. Iosevich, J. Iosevich, A. Khalil, J. King, N. Kulkarni, T. Le, I. Li, A. Mayeli, B. McDonald, K. Nguyen, and N. Shaffer,
The Fourier ratio and complexity of signals,
\href{https://arxiv.org/abs/2511.19560}{arXiv:2511.19560}, 2025.

\bibitem{BalogSzemeredi1994}
A. Balog and E. Szemer\'edi,
A statistical theorem of set addition,
Combinatorica 14 (1994), no. 3, 263--268.
\href{https://doi.org/10.1007/BF01212974}{doi:10.1007/BF01212974}.

\bibitem{BandeiraLewisMixon2018}
A. S. Bandeira, M. E. Lewis, and D. G. Mixon,
Discrete uncertainty principles and sparse signal processing,
Journal of Fourier Analysis and Applications 24 (2018), no. 4, 935--956.
\href{https://doi.org/10.1007/s00041-017-9550-x}{doi:10.1007/s00041-017-9550-x}.

\bibitem{Barron1993}
A. R. Barron,
Universal approximation bounds for superpositions of a sigmoidal function,
IEEE Transactions on Information Theory 39 (1993), no. 3, 930--945.
\href{https://doi.org/10.1109/18.256500}{doi:10.1109/18.256500}.

\bibitem{BrouwerHaemers2012}
A. E. Brouwer and W. H. Haemers,
Spectra of graphs,
Universitext, Springer, New York, 2012.
\href{https://doi.org/10.1007/978-1-4614-1939-6}{doi:10.1007/978-1-4614-1939-6}.

\bibitem{BursteinIosevichNathan2026}
W. Burstein, A. Iosevich, and H. S. Nathan,
The Fourier ratio: a unifying measure of complexity for recovery,
localization, and learning,
Applied and Computational Harmonic Analysis 85 (2026),
Article 101904.
\href{https://doi.org/10.1016/j.acha.2026.101904}{doi:10.1016/j.acha.2026.101904}.

\bibitem{Chung1997}
F. R. K. Chung,
Spectral graph theory,
CBMS Regional Conference Series in Mathematics 92,
American Mathematical Society, Providence, RI, 1997.

\bibitem{Gowers1998}
W. T. Gowers,
A new proof of Szemer\'edi's theorem for arithmetic progressions of length four,
Geometric and Functional Analysis 8 (1998), no. 3, 529--551.
\href{https://doi.org/10.1007/s000390050065}{doi:10.1007/s000390050065}.

\bibitem{GuptaIosevichWeighted2026}
V. Gupta and A. Iosevich,
Edge complexity of weighted graphs: involutory symmetries and NP-hardness,
\href{https://arxiv.org/abs/2609.23842}{arXiv:2609.23842}, 2026.

\bibitem{GuptaIosevich2026}
V. Gupta and A. Iosevich,
Fourier ratios of graph kernels: energy bounds, optimal labelings, and recovery,
\href{https://arxiv.org/abs/2606.15475}{arXiv:2606.15475}, 2026.

\bibitem{GuptaIosevichEtAl2026}
V. Gupta, A. Iosevich, J. Iosevich, B. Song, and H. Tian,
Edge complexity of graphs,
\href{https://arxiv.org/abs/2607.15598}{arXiv:2607.15598}, 2026.

\bibitem{Gutman1978}
I. Gutman,
The energy of a graph,
Ber. Math.-Statist. Sekt. Forschungsz. Graz 103 (1978), 1--22.

\bibitem{LiShiGutman2012}
X. Li, Y. Shi, and I. Gutman,
Graph energy,
Springer, New York, 2012.
\href{https://doi.org/10.1007/978-1-4614-4220-2}{doi:10.1007/978-1-4614-4220-2}.

\bibitem{ReiherSchoen2024}
C. Reiher and T. Schoen,
Note on the theorem of Balog, Szemer\'edi, and Gowers,
Combinatorica 44 (2024), no. 3, 691--698.
\href{https://doi.org/10.1007/s00493-024-00092-5}{doi:10.1007/s00493-024-00092-5}.

\bibitem{Renyi1961}
A. R\'enyi,
On measures of entropy and information,
in Proceedings of the Fourth Berkeley Symposium on Mathematical Statistics and Probability,
Volume I, Contributions to the Theory of Statistics,
University of California Press, Berkeley, 1961, 547--561.

\bibitem{TaoVu2006}
T. Tao and V. H. Vu,
Additive combinatorics,
Cambridge Studies in Advanced Mathematics 105,
Cambridge University Press, Cambridge, 2006.
\href{https://doi.org/10.1017/CBO9780511755149}{doi:10.1017/CBO9780511755149}.

\bibitem{Terras1999}
A. Terras,
Fourier analysis on finite groups and applications,
London Mathematical Society Student Texts 43,
Cambridge University Press, Cambridge, 1999.
\href{https://doi.org/10.1017/CBO9780511626265}{doi:10.1017/CBO9780511626265}.

\bibitem{ZhuHayashiChen2018}
H. Zhu, M. Hayashi, and L. Chen,
Axiomatic and operational connections between the
$\ell^1$-norm of coherence and negativity,
Physical Review A 97 (2018), Article 022342.
\href{https://doi.org/10.1103/PhysRevA.97.022342}{doi:10.1103/PhysRevA.97.022342}.

\end{thebibliography}
\end{document}